\theoremstyle{plain}
\newtheorem{theorem}{Theorem}[section]
\newtheorem{lemma}[theorem]{Lemma}
\newtheorem{conjecture}[theorem]{Conjecture}
\theoremstyle{remark}
\newtheorem{remark}[theorem]{Remark}
\theoremstyle{definition}
\newtheorem{example}[theorem]{Example}
\numberwithin{equation}{section}
\DeclareMathOperator{\Id}{Id}
\DeclareMathOperator{\id}{id}
\DeclareMathOperator{\ch}{char}
\DeclareMathOperator{\ad}{ad}
\DeclareMathOperator{\Aut}{Aut}
\DeclareMathOperator{\sign}{sign}
\DeclareMathOperator{\Alt}{Alt}
\DeclareMathOperator{\Hom}{Hom}
\DeclareMathOperator{\PIexp}{PIexp}
\begin{document}

\title{Lie algebras simple with respect to a Taft algebra action}

\author{A.\,S.~Gordienko}
\address{Vrije Universiteit Brussel, Belgium}
\email{alexey.gordienko@vub.ac.be}

\keywords{Polynomial identity, $H$-module algebra, Taft algebra, codimension, PI-exponent, Lie algebra.}

\begin{abstract} 
We classify finite dimensional $H_{m^2}(\zeta)$-simple $H_{m^2}(\zeta)$-module Lie algebras $L$ over an algebraically closed field of characteristic $0$ where $H_{m^2}(\zeta)$ is the $m$th Taft algebra.
As an application, we show that despite the fact that $L$ can be non-semisimple in ordinary sense, 
$\lim_{n\to\infty}\sqrt[n]{c_n^{H_{m^2}(\zeta)}(L)} = \dim L$ where $c_n^{H_{m^2}(\zeta)}(L)$ is the codimension sequence of polynomial $H_{m^2}(\zeta)$-identities of $L$. In particular, the analog of Amitsur's conjecture holds for $c_n^{H_{m^2}(\zeta)}(L)$. 
\end{abstract}

\subjclass[2010]{Primary 17B40; Secondary 17B01, 17B05, 17B40, 17B70, 16T05, 20C30.}

\thanks{Supported by Fonds Wetenschappelijk Onderzoek~--- Vlaanderen post doctoral fellowship (Belgium).}

\maketitle

\section{Introduction}

An $H_{m^2}(\zeta)$-module algebra, where $\zeta$ is a primitive $m$th root of unity, $m\in\mathbb N$, $m\geqslant 2$,
 is an algebra $A$ endowed with an automorphism $c$ and a skew-derivation $v$
such that $vc=\zeta cv$, $c^m=\id_{A}$, and $v^m=0$.
In particular, $A$ is a $\mathbb Z_m$-graded algebra where $\mathbb Z_m=\mathbb Z / m \mathbb Z$
is the cyclic group of order $m$ (see Remark~\ref{RemarkTaftZmGrading}).
$H_{m^2}(\zeta)$-module algebras provide, probably, easiest examples of $H$-module algebras for a non-semisimple Hopf algebra $H$. 
The study of $H$-module algebras can be considered as the next logical step
after the investigation of graded algebras, which have been studied extensively (see e.g.~\cite{BahturinZaicevSegal, ElduqueKochetov, NastasescuVanOystaeyen}).
In the context of polynomial identities, $H$-module Lie algebras were considered in~\cite{LinchenkoPILie}.
It is worth to notice that $H_{m^2}(\zeta)$-module algebras can have a structure quite different from the structure of $H$-module algebras for a semisimple Hopf algebra $H$. For example, an $H_{m^2}(\zeta)$-simple (i.e. not containing non-trivial $H_{m^2}(\zeta)$-invariant ideals) algebra can have a non-trivial radical.

Finite dimensional associative $H_{m^2}(\zeta)$-module algebras that contain no nonzero nilpotent elements were classified in~\cite[Theorem~2.5]{MontgomerySchneider}. Exact module categories over the category $\mathrm{Rep}(H_{m^2}(\zeta))$ were studied in~\cite[Theorem~4.10]{EtingofOstrik}.
Finite dimensional associative $H_{m^2}(\zeta)$-simple $H_{m^2}(\zeta)$-module algebras were classified in~\cite{ASGordienko11, ASGordienko12}.
$H_{m^2}(\zeta)$-actions on path algebras of quivers were studied in~\cite{KinserWalton}.

In Section~\ref{SectionClassTaftSLieSS} we construct semisimple $H_{m^2}(\zeta)$-simple Lie algebras
$L_\alpha(B)$
where $B$ is a finite dimensional simple Lie algebra and $\alpha \in F$ where $F$ is the base field. In Theorems~\ref{TheoremTaftSimpleSemiSimpleLieClassify} and~\ref{TheoremTaftSimpleSimpleLieClassify}
we prove that if $F$ is algebraically closed of characteristic $0$, then
each finite dimensional $H_{m^2}(\zeta)$-simple $H_{m^2}(\zeta)$-module Lie algebra
semisimple in ordinary sense
either has the zero $v$-action or is isomorphic as an $H_{m^2}(\zeta)$-module Lie algebra to one of the Lie algebras $L_\alpha(B)$. In Theorem~\ref{TheoremTaftSimpleSemiSimpleLieIso}
we show that $L_{\alpha_1}(B_1) \cong L_{\alpha_2}(B_2)$
if and only if $B_1 \cong B_2$ and $\alpha_2 = \zeta^k \alpha_1$ for some $k\in\mathbb Z$.

In order to exclude the case of simple $H_{m^2}(\zeta)$-simple Lie algebras, which is done in Theorem~\ref{TheoremTaftSimpleSimpleLieClassify}, and treat the case of non-semisimple $H_{m^2}(\zeta)$-simple Lie algebras, in Section~\ref{SectionClassTaftSLieSimple} we introduce $H_{m^2}(\zeta)$-simple Lie algebras $L(B,\gamma)$ where $\gamma \in F$ and $B$ is a simple Lie algebra. In Theorem~\ref{TheoremTaftSimpleLieEquivDef} we show that 
$L(B, \frac{1}{\alpha^m (1-\zeta)^m}) \cong L_\alpha(B)$ 
as $H_{m^2}(\zeta)$-module Lie algebras for $\alpha\ne 0$. In Theorem~\ref{TheoremTaftSimpleNonSemiSimpleLieClassify} we prove that every non-semisimple 
$H_{m^2}(\zeta)$-simple Lie algebra is isomorphic to one of the Lie algebras $L(B,0)$.
It turns out that the nilpotent radical of each $H_{m^2}(\zeta)$-simple Lie algebra coincides with its
solvable radical.

Although the classification of Lie algebras
is in some sense parallel to the associative case, the Lie case requires different techniques.
Furthermore, anti-commutativity of the commutator
in a Lie algebra is a strong restriction on the $H_{m^2}(\zeta)$-action
and we get much less possible parameters to describe $H_{m^2}(\zeta)$-simple Lie algebras.
In addition, every finite dimensional $H_{m^2}(\zeta)$-module Lie algebra
simple in the ordinary sense, is just a $\mathbb Z_m$-graded Lie algebra
with the zero skew-derivation (Theorem~\ref{TheoremTaftSimpleSimpleLieClassify}), which is in contrast to the associative case~\cite[Theorem~1]{ASGordienko12}.

In Section~\ref{SectionHPI-expTaftSLie} we apply the results obtained to codimensions of polynomial $H$-identities.

The codimension sequence $c_n(A)$ of polynomial identities of an algebra $A$ is an important numerical invariant of $A$.
It turns out that the asymptotic behaviour of $c_n(A)$ is tightly related to the structure of $A$~\cite{ZaiGia, ZaiLie}.

In 1980s, S.\,A.~Amitsur conjectured that if an associative algebra $A$
over a field of characteristic $0$ satisfies a nontrivial
polynomial identity, then there exists a \textit{PI-exponent} $\lim_{n\to\infty}\sqrt[n]{c_n(A)} \in \mathbb Z_+$. The original Amitsur conjecture was proved by A.~Giambruno and M.\,V.~Zaicev~\cite{ZaiGiaAmitsur} in 1999. Its analog for finite dimensional Lie algebras was proved by M.\,V.~Zaicev~\cite{ZaiLie}
in 2002.

In general, the analog of Amitsur's conjecture for infinite dimensional Lie algebras is wrong. First, the codimension growth can be overexponential~\cite{Volichenko}. Second, the exponent of the codimension growth can be non-integer~\cite{ZaiMishchFracPI, VerZaiMishch}. 
It is still unknown whether there exist Lie algebras $L$ with $$\mathop{\underline\lim}_{n\to\infty}\sqrt[n]{c_n(L)}\ne\mathop{\overline\lim}_{n\to\infty}\sqrt[n]{c_n(L)}.$$

Algebras endowed with an additional structure, e.g. grading, action of a group $G$, a Lie algebra or a Hopf algebra $H$, find their applications in many areas of mathematics and physics.
 For algebras with an additional structure, it is natural to consider the corresponding polynomial identities, namely, graded, differential, $G$- and $H$-identities.

 Polynomial $H$-identities have proved to be an important tool
in the study of graded polynomial identities in graded algebras.
In fact, they play a crucial role in the proof of the existence of an integer graded PI-exponent for an arbitrary group graded finite dimensional Lie algebra
~\cite[Theorem~1]{ASGordienko5}.
In~\cite[Theorem~7]{ASGordienko5} the author proved 
that for every finite dimensional semisimple Hopf algebra $H$
and every finite dimensional $H$-module Lie algebra
there exists integer $\PIexp^H(L):=\lim_{n\to\infty}\sqrt[n]{c_n^H(L)}$
where $c_n^H(L)$ is the codimension sequence of polynomial $H$-identities
of $L$, i.e. the analog of Amitsur's conjecture holds for $c_n^H(L)$.
Later the analog of Amitsur's conjecture was proved for some other classes of $H$-module Lie algebras where the solvable and nilpotent radicals
were still $H$-invariant~\cite{ASGordienko7}.

We believe that the analog of Amitsur's conjecture for $H$-module Lie algebras is true in the following form which belongs to Yu.\,A.~Bahturin:

\begin{conjecture}\label{ConjectureLieAmitsurBahturin} Let $L$ be a finite dimensional $H$-module Lie algebra for a Hopf algebra $H$ over a field of characteristic $0$. Then there exists
an integer $\PIexp^H(L):=\lim\limits_{n\to\infty}
 \sqrt[n]{c^H_n(L)}$.
\end{conjecture}

In Theorem~\ref{TheoremTaftSimpleLieHPIexpExists}
we show this in the case when $L$ is an $H_{m^2}(\zeta)$-simple 
$H_{m^2}(\zeta)$-module Lie algebra over an algebraically closed field of characteristic $0$.
In this case we have $\lim_{n\to\infty}\sqrt[n]{c_n^{H_{m^2}(\zeta)}(L)} = \dim L$.

\section{$H$-module algebras}\label{SectionHmodLie}

 A (not necessarily associative) algebra $A$
over a field $F$
is a \textit{(left) $H$-module algebra}
for some Hopf algebra $H$
if $L$ is a (left) $H$-module such that
$h(ab)=(h_{(1)}a)(h_{(2)}b)$
for all $h \in H$, $a,b \in A$. Here we use Sweedler's notation
$\Delta h = h_{(1)} \otimes h_{(2)}$ where $\Delta$ is the comultiplication
in $H$.
We refer the reader to~\cite{Danara, Montgomery, Sweedler}
   for an account
  of Hopf algebras and algebras with Hopf algebra actions.
  
  In the current article we study $H$-module Lie algebras $L$. The product of two elements $a,b\in L$  is denoted by $[a,b]$.
We say that $L$ is \textit{$H$-simple} if $[L,L]\ne 0$ and $L$ has no non-trivial $H$-invariant ideals.

  \begin{example}
  If $G$ is a group and $F$ is a field, then the group algebra $FG$ is a Hopf algebra
  where $\Delta(g)=g\otimes g$, $\varepsilon(g)=1$, and $S(g)=g^{-1}$
  for all $g\in G$. If $G$ is acting on a Lie algebra $L$ by automorphisms,
  then the $G$-action can be extended by linearity to an $FG$-action
  such that $L$ is an $FG$-module Lie algebra.
  \end{example}
  \begin{example}
  If $\mathfrak g$ is a Lie algebra over a field $F$, then its universal enveloping
  algebra $U(\mathfrak g)$ is a Hopf algebra where $\Delta(a)=1\otimes a + a \otimes 1$,
  $S(a)=-a$, $\varepsilon(a)=0$ for all $a\in\mathfrak g$. (The maps $\Delta \colon U(\mathfrak g)
  \to U(\mathfrak g) \otimes U(\mathfrak g)$ and $\varepsilon \colon U(\mathfrak g) \to F$ are extended from $\mathfrak g$ as homomorphisms of algebras with $1$
  and the map $S \colon U(\mathfrak g) \to U(\mathfrak g)$ is extended as an anti-homomorphism  of algebras with $1$.) If $\mathfrak g$ is acting on a Lie algebra $L$ by derivations, then the $\mathfrak g$-action
  can be naturally extended to a $U(\mathfrak g)$-action such that $L$ is a $U(\mathfrak g)$-module Lie algebra.
  \end{example}
  
Let $G$ be a group. A Lie algebra $L=\bigoplus_{g\in G} L^{(g)}$ (direct sum of subspaces) is \textit{$G$-graded} if  $[L^{(g)},L^{(h)}] \subseteq L^{(gh)}$ for all $g,h\in T$. A subspace $V$ of $L$ is \textit{graded} (or \textit{homogeneous}) if $V=\bigoplus_{g\in G} V \cap L^{(g)}$.

Consider the vector space $(FG)^*$ dual to $FG$. Then $(FG)^*$ is an associative algebra with the multiplication defined
by $(hw)(g)=h(g)w(g)$ for $h,w \in (FG)^*$ and $g\in G$. The identity element 
is defined by $1_{(FG)^*}(g)=1$ for all $g\in G$. In other words, $(FG)^*$ is the algebra dual to the coalgebra $FG$.

If $L=\bigoplus_{g\in G} L^{(g)}$ is a $G$-graded Lie algebra, then we have the following natural $(FG)^*$-action on $L$: $h a^{(g)}:=h(g)a^{(g)}$ for all $h \in (FG)^*$, $a^{(g)}\in L^{(g)}$
and $g\in G$. If $G$ is a finite group, then $(FG)^*$
  has a structure of a Hopf algebra and $L$ becomes an $(FG)^*$-module Lie algebra.

Let $m \geqslant 2$ be an integer and let $\zeta$ be a primitive $m$th root of unity
in a field $F$. (Such root exists in $F$ only if $\ch F \nmid m$.)
Consider the associative algebra $H_{m^2}(\zeta)$ with unity generated
by elements $c$ and $v$ satisfying the relations $c^m=1$, $v^m=0$, $vc=\zeta cv$.
Note that $(c^i v^k)_{0 \leqslant i, k \leqslant m-1}$ is a basis of $H_{m^2}(\zeta)$.
We introduce on $H_{m^2}(\zeta)$ a structure of a coalgebra by
  $\Delta(c)=c\otimes c$,
$\Delta(v) = c\otimes v + v\otimes 1$, $\varepsilon(c)=1$, $\varepsilon(v)=0$.
Then $H_{m^2}(\zeta)$ is a Hopf algebra with the antipode $S$ where $S(c)=c^{-1}$
and $S(v)=-c^{-1}v$. The algebra $H_{m^2}(\zeta)$ is called a \textit{Taft algebra}.

In the paper, each time we consider $H_{m^2}(\zeta)$-module algebras, we implicitly assume that the base 
field $F$ contains a primitive $m$th root of unity $\zeta$
and $\ch F \nmid m$.

\begin{remark}\label{RemarkTaftZmGrading}
Note that if $L$ is an $H_{m^2}(\zeta)$-module Lie algebra, then the cyclic group $\langle c \rangle_m \cong \mathbb Z_m := \mathbb Z / m\mathbb Z$
is acting on $L$ by automorphisms and $v$ is acting by a nilpotent skew-derivation. 
Every Lie algebra $L$ with a $\mathbb Z_m$-action by automorphisms is a $\mathbb Z_m$-graded Lie algebra: $$L = \bigoplus_{i=0}^{m-1} L^{(i)},\qquad L^{(i)} := \lbrace a \in L \mid ca = \zeta^i a\rbrace,\qquad
L^{(i)}L^{(k)}\subseteq L^{(i+k)}.$$ (When we consider $\mathbb Z_m$-gradings, all upper indices in parentheses are assumed to be modulo $m$.) Conversely, if $L = \bigoplus_{i=0}^{m-1} L^{(i)}$ is a $\mathbb Z_m$-graded Lie algebra,
then $\mathbb Z_m$ is acting on $L$ by automorphisms: $c a^{(i)} := \zeta^i a^{(i)}$
for all $a^{(i)} \in L^{(i)}$. Moreover, $L$ is \textit{$\mathbb Z_m$-simple} (i.e. $[L,L]\ne 0$ and $L$ has no non-trivial $\mathbb Z_m$-invariant ideals) if and only if $L$ is \textit{$\mathbb Z_m$-graded simple} (i.e. $[L,L]\ne 0$ and $L$ has no non-trivial ideals homogeneous in the $\mathbb Z_m$-grading). If $L$ is a $\mathbb Z_m$-graded Lie algebra, then its solvable and nilpotent
radicals are $\mathbb Z_m$-graded ideals since they are stable under the automorphism $c$.

In the article, each time we consider a $\mathbb Z_m$-grading
on an $H_{m^2}(\zeta)$-module algebra, we assume that this $\mathbb Z_m$-grading is induced by the $c$-action.
\end{remark}

$\mathbb Z_m$-graded modules over $\mathbb Z_m$-graded Lie algebras are defined in the natural way.
The analogs of the Weyl theorem on complete reducibility and the Jordan--H\"older Theorem
hold for them. (The proof of the first one can be found e.\,g. in~\cite[Lemma~3]{ASGordienko2} or~\cite[Theorem 9]{ASGordienko4}.
The second one is proved in the usual way considering graded submodules only.)

In the theorems below we use \textit{quantum binomial coefficients}: 
$$\binom{n}{0}_\zeta = \binom{n}{n}_\zeta := 1,\qquad
\binom{n}{k}_\zeta := \frac{n!_\zeta}{(n-k)!_\zeta\ k!_\zeta}$$ where $n!_\zeta := n_\zeta (n-1)_\zeta \cdot \dots \cdot 1_\zeta$ and
$n_\zeta := 1 + \zeta + \zeta^2 + \dots + \zeta^{n-1}$, $n\in\mathbb N$, $0_\zeta :=1$.

\section{Semisimple $H_{m^2}(\zeta)$-simple Lie algebras}\label{SectionClassTaftSLieSS}

In this section we classify semisimple $H_{m^2}(\zeta)$-simple Lie algebras which are non-simple
in the ordinary sense.

Let $B$ be a simple Lie algebra over a field $F$.
Suppose $F$ contains a primitive $m$th root of unity $\zeta$. 
Let $\alpha \in F$. Denote
$$L_\alpha(B) := \underbrace{B\oplus \ldots \oplus B}_{m} \text{ (direct sum of ideals)}.$$
Define the $c$- and $v$-action on $L_\alpha(B)$
by
\begin{equation}\label{EqLieTaftSimpleSSDefc}c(a_1, a_2, \ldots, a_{m-1}, a_m) = (a_m, a_1, a_2, \ldots, a_{m-1})\end{equation}
and \begin{equation}\label{EqLieTaftSimpleSSDefv}v(a_1, \ldots, a_m)=\alpha\left(a_1 - a_m, \zeta(a_2 - a_1), \ldots, \zeta^{m-1}(a_m - a_{m-1})\right)\end{equation} for all $a_1, \ldots, a_m \in B$.

By induction (the details can be found in~\cite[Lemma 3]{ASGordienko12}),
for arbitrary  $a_1, \ldots, a_m \in B$,
we get $$v^\ell (a_1, a_2, \ldots, a_m) =(b_1, b_2, \ldots, b_m)$$
where \begin{equation*} b_k = \alpha^\ell \zeta^{\ell(k-1)} \sum_{j=0}^\ell  (-1)^j \zeta^{-\frac{j(j-1)}{2}} \binom{\ell}{j}_{\zeta^{-1}}
a_{k-j} \end{equation*}
and $a_{-j} := a_{m-j} $ for $j \geqslant 0$.
Hence $v^m (a_1, a_2, \ldots, a_m) = 0$ for all $a_i \in B$.
An explicit check shows that~(\ref{EqLieTaftSimpleSSDefc}) and~(\ref{EqLieTaftSimpleSSDefv})
define the $H_{m^2}(\zeta)$-action on $L_\alpha(B)$ correctly and $L_\alpha(B)$ is $\mathbb Z_m$-graded simple and, therefore, $H_{m^2}(\zeta)$-simple.

Another description of $L_\alpha(B)$ for $\alpha \ne 0$
will be given in Theorem~\ref{TheoremTaftSimpleLieEquivDef} below.

\begin{theorem}\label{TheoremTaftSimpleSemiSimpleLieClassify}
Let $L$ be a finite dimensional semisimple $H_{m^2}(\zeta)$-simple Lie algebra over an algebraically closed field $F$ of characteristic $0$. Suppose $L$ is semisimple but not simple. Then
$L$ is a $\mathbb Z_m$-graded simple Lie algebra.
If $vL \ne 0$, then $L \cong L_\alpha(B)$ for some simple Lie algebra $B$ and some $\alpha \in F$.
\end{theorem}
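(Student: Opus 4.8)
The plan is to first reduce the statement to a fact about the ordinary semisimple structure, and only afterwards to pin down the skew-derivation $v$ blockwise. Write $L=\bigoplus_{i=1}^{d}S_i$ as the direct sum of its simple ideals. Since $c$ is an automorphism it permutes the minimal ideals $S_i$, and every ideal of $L$ is a sum of some of the $S_i$; hence the $c$-invariant (equivalently, $\mathbb Z_m$-graded) ideals are exactly the sums over unions of $c$-orbits. The first step I would take is to observe that each orbit-sum $J=\bigoplus_{i\in O}S_i$ is automatically $v$-invariant: $J$ is semisimple, so $J=[J,J]$, and for $a,b\in J$ the skew-derivation identity $v[a,b]=[ca,vb]+[va,b]$, together with $cJ=J$ and the fact that distinct simple ideals commute, forces $v[a,b]\in J$, whence $vJ=v[J,J]\subseteq J$. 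Thus every orbit-sum is an $H_{m^2}(\zeta)$-invariant ideal, and $H_{m^2}(\zeta)$-simplicity leaves a single $c$-orbit. This already yields $\mathbb Z_m$-graded-simplicity and shows that all $S_i$ are mutually isomorphic (being conjugate under $c$).

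Assume now $vL\ne 0$. I would relabel the orbit so that $cS_i=S_{i+1}$ with indices modulo $d$, and write $v=(v_{ij})$ with $v_{ij}\colon S_j\to S_i$. Evaluating the skew-derivation identity on $a,b\in S_j$ and projecting onto the simple ideals, I expect to obtain $v_{ij}=0$ unless $i\in\{j,\,j+1\}$, the relation $v_{jj}[a,b]=[v_{jj}a,b]=[a,v_{jj}b]$ for the diagonal block, and the analogous relation for $(c|_{S_j})^{-1}v_{j+1,j}$. Each of these maps then commutes with all operators $\ad a$, $a\in S_j$; since the adjoint representation of a simple Lie algebra is irreducible and $F$ is algebraically closed, Schur's lemma gives $v_{jj}=\lambda_j\id$ and $v_{j+1,j}=\mu_j\,c$ for scalars $\lambda_j,\mu_j\in F$. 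Substituting $a\in S_j$, $b\in S_{j+1}$ into the remaining mixed instance of the skew-derivation identity should then give $\mu_j=-\lambda_{j+1}$, so that $v(a)=\lambda_j a-\lambda_{j+1}\,ca$ for $a\in S_j$.

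The crux is the next step, where the relation $vc=\zeta cv$ and the primitivity of $\zeta$ enter. Applying $vc=\zeta cv$ to $a\in S_j$ and comparing the components lying in the simple ideals $S_{j+1}$ and $S_{j+2}$ (distinct because $d\ge 2$) gives $\lambda_{j+1}=\zeta\lambda_j$ for every $j$. Running once around the cycle of length $d$ produces $\lambda_j=\zeta^{d}\lambda_j$; since $vL\ne 0$ forces some $\lambda_j\ne 0$, we get $\zeta^{d}=1$, and as $\zeta$ is a primitive $m$-th root of unity with $d\mid m$ this forces $d=m$. Hence the $c$-orbit has full length $m$ and $L\cong B^{\oplus m}$ with $B:=S_1$. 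Finally, because $c^{m}=\id$ there is no twist on returning around the cycle, so I can identify each $S_i$ with $B$ by a suitable power of $c$ in such a way that $c$ becomes the cyclic shift of~(\ref{EqLieTaftSimpleSSDefc}). Writing $\lambda_j=\zeta^{\,j-1}\alpha$ with $\alpha:=\lambda_1$, the resulting formula $v(a)=\zeta^{\,j-1}\alpha\,a-\zeta^{\,j}\alpha\,(ca)$ on $S_j$ matches the defining action~(\ref{EqLieTaftSimpleSSDefv}) of $L_\alpha(B)$ blockwise, so this identification is an isomorphism of $H_{m^2}(\zeta)$-module Lie algebras $L\cong L_\alpha(B)$. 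I expect the main obstacle to be the middle step: extracting the scalar shape of $v$ from irreducibility of the adjoint representation via Schur's lemma, and then converting $vc=\zeta cv$ into the numerical constraint $\zeta^{d}=1$ that collapses the orbit to full length $m$.
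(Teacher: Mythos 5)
Your proposal is correct and follows essentially the same route as the paper: decompose into simple ideals, use $H_{m^2}(\zeta)$-simplicity to reduce to a single $c$-orbit (giving $\mathbb Z_m$-graded-simplicity), identify the diagonal and superdiagonal blocks of $v$ as scalars via Schur's lemma applied to the adjoint representation (your $v_{jj}=\lambda_j\id$, $v_{j+1,j}=\mu_j c$ are exactly the paper's $\rho_i=\alpha_i\id$, $\theta_i=\beta_i c$), and extract $\mu_j=-\lambda_{j+1}$ and $\lambda_{j+1}=\zeta\lambda_j$ from the vanishing of mixed brackets and from $vc=\zeta cv$, forcing the orbit length to be $m$ and matching~(\ref{EqLieTaftSimpleSSDefv}). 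The only cosmetic difference is that you establish $v$-invariance of the orbit sum by the coarse ideal argument before refining to the block structure, whereas the paper gets the refined containment $vB_i\subseteq B_i\oplus B_{j(i)}$ in one step.
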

\begin{proof}
First, $L=B_1 \oplus \ldots \oplus B_s$ (direct sum of ideals)
where $B_i$ are simple Lie algebras. Thus for every $1\leqslant i \leqslant s$
there exists $1\leqslant j(i) \leqslant s$ such that
$cB_i = B_{j(i)}$.
Moreover, $v[a,b]=[ca, vb]+[va, b] \in B_i \oplus B_{j(i)}$
for all $a,b \in B_i$. Since $[B_i, B_i] = B_i$, we get $vB_i \subseteq  B_i \oplus B_{j(i)}$.
In particular, the ideal $\sum_{k=0}^{m-1} c^k B_1$ is invariant under both $c$ and $v$.
Since $L$ is $H_{m^2}(\zeta)$-simple, we get $\sum_{k=0}^{m-1} c^k B_1 = L$
and, obviously, $L$ is $\mathbb Z_m$-simple and $\mathbb Z_m$-graded simple.
Without loss of generality, we may assume that $B_i = c^{i-1} B_1$.

Let $\pi_i \colon B \twoheadrightarrow B_i$ be the natural projection.
Define $\rho_i \colon B_i \to B_i$ and $\theta_i \colon B_i \to cB_i$
by $\rho_i(a)=\pi_i(va)$ and $\theta_i(a)=\pi_{i+1}(va)$
for all $a\in B_i$. (In the proof of the theorem all lower indices are assumed to be modulo $s$, e.g. $\pi_{s+1} := \pi_1$.)
Then $$\rho_i[a,b]=\pi_i(v[a,b])=\pi_i([ca,vb]+[va,b])=[\rho_i(a),b].$$
Analogously, $$\theta_i[a,b]=\pi_{i+1}(v[a,b])=\pi_{i+1}([ca,vb]+[va,b])=[ca, \theta_i(b)]$$
for all $a,b\in B_i$.
In particular, both $\rho_i$ and $\theta_i$ are homomorphisms of $B_i$-modules.

Since $B_i$ are simple Lie algebras, $B_i$ are irreducible $B_i$-modules.
Hence by the Schur lemma we have $\rho_i = \alpha_i \id_{B_i}$
and $\theta_i = \beta_i \left(c\bigr|_{B_i}\right)$ for some $\alpha_i, \beta_i \in F$.
Since $vc=\zeta cv$, we have $$\alpha_{i+1} ca =\rho_{i+1}(ca)=\zeta\pi_{i+1}(c(v(a)))=
\zeta c(\rho_i(a))=\zeta \alpha_i ca$$
and $$\beta_{i+1} c^2 a =\theta_{i+1}(ca)=\zeta\pi_{i+2}(c(v(a)))=
\zeta c(\theta_i(a))=\zeta \beta_i c^2 a$$ for all $1\leqslant i \leqslant s$
and $a\in B_i$.
 Hence $\alpha_i = \zeta^{i-1} \alpha_1$
and $\beta_i = \zeta^{i-1} \beta_1$ for all $1\leqslant i \leqslant s$.
Moreover, if at least one of $\alpha_1$ and $\beta_1$ is nonzero, we get $\zeta^s = 1$
and $s=m$.

Note that for all $1\leqslant i \leqslant s$, $a \in B_i$, and $b \in B_{i+1}$
we have $$0=v[a,b]=[ca,vb]+[va, b]=[ca,\rho_{i+1}(b)]+[\theta_i(a),b]=(\alpha_{i+1}+\beta_i) [ca, b].$$
Since $[B_{i+1}, B_{i+1}]=B_{i+1}$,
we obtain $\beta_i = -\alpha_{i+1}$ for all $1\leqslant i \leqslant s$.

If $\alpha_1 = 0$, then $vL=0$ and the theorem is proved.
Suppose $\alpha_1 \ne 0$. Then $s=m$.
Since $B_i = c^{i-1} B_1$ and $va=\rho_i(a)+\theta_i(a)$ for all $a\in B_i$, we may identify $B_i$ and assume that $L=\underbrace{B\oplus \ldots \oplus B}_{m}$ (direct sum of ideals)
for the simple Lie algebra $B:=B_1$,
where (\ref{EqLieTaftSimpleSSDefc}) and~(\ref{EqLieTaftSimpleSSDefv}) hold for $\alpha := \alpha_1$.
\end{proof}
\begin{remark}
If $vL=0$, then the proof of Theorem~\ref{TheoremTaftSimpleSemiSimpleLieClassify} 
shows that there exists $s\in\mathbb N$, $s \mid m$, and a simple Lie algebra $B$ with an action
of the cyclic group of order $\frac{m}{s}$ with a generator $d$ such that
$$L \cong \underbrace{B\oplus \ldots \oplus B}_{s} \text{ (direct sum of ideals)},$$
\begin{equation*}c(a_1, a_2, \ldots, a_{m-1}, a_m) = (d a_m, a_1, a_2, \ldots, a_{m-1})\end{equation*}
and \begin{equation*}v(a_1, \ldots, a_m)=0\end{equation*} for all $a_1, \ldots, a_m \in B$.
\end{remark}

\medskip

In Theorem~\ref{TheoremTaftSimpleSemiSimpleLieIso} below we give
necessary and sufficient conditions for $L_{\alpha_1}(B_1) \cong L_{\alpha_2}(B_2)$ as $H_{m^2}(\zeta)$-module Lie algebras.

\begin{theorem}\label{TheoremTaftSimpleSemiSimpleLieIso}
Let $B_1, B_2$ be simple Lie algebras over a field $F$, $\alpha_1, \alpha_2 \in F$.
Let $\zeta$ be a primitive $m$th root of unity.
Suppose $\theta \colon L_{\alpha_1}(B_1) \mathrel{\widetilde{\to}} L_{\alpha_2}(B_2)$ is an isomorphism
of Lie algebras and $H_{m^2}(\zeta)$-modules.
Then there exists $0\leqslant k \leqslant m-1$ and an isomorphism
of Lie algebras $\psi \colon B_1 \mathrel{\widetilde{\to}} B_2$ such that
\begin{equation}\label{EqLieTaftSimpleSSDefTheta}\theta(b_1, \ldots, b_m)=(\psi(b_{k+1}), \ldots, \psi(b_m), \psi(b_1), \ldots, \psi(b_k))\end{equation}
for all $b_i \in B_1$.
Moreover, $\alpha_2 = \zeta^k\alpha_1$.
Conversely, if $B_1 \cong B_2$ as ordinary Lie algebras and $\alpha_2 = \zeta^k\alpha_1$ for some $k\in\mathbb Z$, then $L_{\alpha_1}(B_1) \cong L_{\alpha_2}(B_2)$ as $H_{m^2}(\zeta)$-module Lie algebras.
\end{theorem}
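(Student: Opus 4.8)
The plan is to reduce everything to the decomposition of $L_{\alpha_1}(B_1)$ and $L_{\alpha_2}(B_2)$ into simple ideals and then read off the constraints imposed by commuting with $c$ and $v$. For the forward part, I would start from the observation that both algebras are semisimple Lie algebras whose minimal ideals are exactly the $m$ copies of $B_1$, respectively $B_2$; write $B_1^{[i]}$ and $B_2^{[i]}$ for the $i$th copies. Any isomorphism of Lie algebras permutes minimal ideals, so $\theta$ determines a permutation $\sigma$ of $\{1,\dots,m\}$ together with isomorphisms $\psi_i\colon B_1\mathrel{\widetilde{\to}} B_2$ such that $\theta$ restricted to $B_1^{[i]}$ equals $\psi_i$ with image $B_2^{[\sigma(i)]}$; in particular $B_1\cong B_2$.

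Next I would use $H$-linearity, which here means $\theta c=c\theta$ and $\theta v=v\theta$ since $c,v$ generate $H_{m^2}(\zeta)$. From (\ref{EqLieTaftSimpleSSDefc}) the action of $c$ cyclically shifts copies, $cB_j^{[i]}=B_j^{[i+1]}$ with indices modulo $m$, so applying $\theta c=c\theta$ to a whole copy forces $\sigma(i+1)=\sigma(i)+1$; hence $\sigma$ is a cyclic shift and $\sigma(i)=i-k$ for a unique $0\leqslant k\leqslant m-1$. Comparing the two sides of $\theta c=c\theta$ on an element supported in a single copy then gives $\psi_{i+1}=\psi_i$ for every $i$, so all the $\psi_i$ coincide with a single $\psi:=\psi_1$, which is precisely the shape (\ref{EqLieTaftSimpleSSDefTheta}). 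To obtain the scalar relation I would feed a general element $(b_1,\dots,b_m)$ into $\theta v=v\theta$ and compute both sides componentwise using (\ref{EqLieTaftSimpleSSDefv}): the $\ell$th slot of $\theta v(b)$ is $\alpha_1\zeta^{\ell+k-1}\bigl(\psi(b_{\ell+k})-\psi(b_{\ell+k-1})\bigr)$, while the $\ell$th slot of $v\theta(b)$ is $\alpha_2\zeta^{\ell-1}\bigl(\psi(b_{\ell+k})-\psi(b_{\ell+k-1})\bigr)$. The difference factors agree and are nonzero for a suitable choice of $b$, so comparing scalars yields $\alpha_1\zeta^{\ell+k-1}=\alpha_2\zeta^{\ell-1}$, i.e.\ $\alpha_2=\zeta^k\alpha_1$.

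For the converse I would simply take an isomorphism $\psi\colon B_1\mathrel{\widetilde{\to}} B_2$, assume $\alpha_2=\zeta^k\alpha_1$, define $\theta$ by (\ref{EqLieTaftSimpleSSDefTheta}), and check directly that it is an isomorphism of $H_{m^2}(\zeta)$-module algebras. It is a linear bijection, being a permutation of coordinates followed by $\psi$ in each slot; since the bracket in $L_\alpha(B)$ is componentwise and $\psi$ respects brackets, $\theta$ is an algebra isomorphism; the relation $\theta c=c\theta$ holds because a cyclic shift commutes with the fixed coordinate rotation by $k$; and the same componentwise computation as above shows $\theta v=v\theta$ exactly because $\alpha_2=\zeta^k\alpha_1$.

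The only genuinely structural input is the first step, that $\theta$ must carry minimal ideals to minimal ideals and hence decomposes as a coordinate permutation twisted by isomorphisms of the simple summands; everything after that is bookkeeping. Accordingly, the hard part will not be any deep argument but keeping the two cyclic index conventions in (\ref{EqLieTaftSimpleSSDefc}) and (\ref{EqLieTaftSimpleSSDefv}) consistent, so that the scalar relation emerges as $\alpha_2=\zeta^k\alpha_1$ rather than its inverse, and making sure the permutation $\sigma$ is matched to the exponent $k$ in (\ref{EqLieTaftSimpleSSDefTheta}) with the correct sign.
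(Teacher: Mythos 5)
Your proposal is correct and follows essentially the same route as the paper: identify the minimal ideals with the copies of $B_i$, use $c$-equivariance to see that $\theta$ is a cyclic coordinate shift twisted by a single isomorphism $\psi$, and then compare slots in $\theta v=v\theta$ to extract $\alpha_2=\zeta^k\alpha_1$. The paper's proof is just a terser version of the same bookkeeping, and your componentwise computation of the $v$-relation matches its conclusion.
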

\begin{proof}
Note that each minimal ideal of $L_{\alpha_2}(B_2)$ coincides
with one of the copies of $B_2$. Thus there exists $0\leqslant k \leqslant m-1$ such that $$\theta(B_1, 0, \ldots, 0)=
(\underbrace{0,\ldots, 0}_{m-k}, B_2, 0, \ldots, 0).$$
Denote the induced isomorphism $B_1 \mathrel{\widetilde{\to}} B_2$ by $\psi$. Then
$$\theta(b, 0, \ldots, 0)=
(\underbrace{0,\ldots, 0}_{m-k}, \psi(b), 0, \ldots, 0)$$
for all $b\in B$.
Now~(\ref{EqLieTaftSimpleSSDefc})
together with $\theta(ca)=c\theta(a)$ for all $a\in L_{\alpha_1}(B_1)$
implies~(\ref{EqLieTaftSimpleSSDefTheta}).
Using~(\ref{EqLieTaftSimpleSSDefv}) 
and $\theta(va)=v\theta(a)$ for all $a\in L_{\alpha_1}(B_1)$,
we get $\alpha_2 = \zeta^k\alpha_1$.
The converse is now evident.
\end{proof}
\begin{remark}
In particular, if $\alpha \ne 0$, then all automorphisms of $L_{\alpha}(B)$ as an $H_{m^2}(\zeta)$-module Lie algebra are induced by automorphisms of $B$ as an ordinary Lie algebra, and the corresponding automorphisms groups $\Aut_{H_{m^2}(\zeta)}(L_{\alpha}(B))$ and $\Aut(B)$ can be identified.
If $\alpha = 0$, then $\Aut_{H_{m^2}(\zeta)}(L_{\alpha}(B)) \cong \Aut(B) \times \mathbb Z_m$.
\end{remark}

\section{Lie algebras $L(B, \gamma)$ and $H_{m^2}(\zeta)$-actions on simple Lie algebras}\label{SectionClassTaftSLieSimple}

The next step in the classification of finite dimensional $H_{m^2}(\zeta)$-simple Lie algebras
is the study of $H_{m^2}(\zeta)$-actions on simple Lie algebras.
In fact, we will prove that finite dimensional simple Lie algebras $L$ endowed with
$H_{m^2}(\zeta)$-action have $va=0$ for all $a\in L$. (See Theorem~\ref{TheoremTaftSimpleSimpleLieClassify} below.)
In order to do this, we introduce $H_{m^2}(\zeta)$-simple Lie algebras $L(B, \gamma)$.

\begin{theorem}\label{TheoremTaftSimpleLiePresent} Let $B$ be a simple Lie algebra over a field $F$
and let $\gamma\in F$ be some element. Suppose $F$ contains some primitive $m$th root of unity $\zeta$. Define vector spaces $L^{(i)}$, $1\leqslant i \leqslant m-1$, $F$-linearly isomorphic to $L^{(0)} := B$. Let $\varphi \colon L^{(i-1)} \mathrel{\widetilde\to} L^{(i)}$, $1 \leqslant i \leqslant m-1$, be the corresponding $F$-linear bijections, which we denote by the same letter. Let $\varphi(L^{(m-1)}):=0$.
Consider the $H_{m^2}(\zeta)$-module $L(B,\gamma) :=\bigoplus_{i=0}^{m-1} L^{(i)}$ (direct sum of subspaces) where $v\varphi(a)=a$ for all $a \in L^{(i)}$,
$0\leqslant i \leqslant m-2$, $vB=0$,
and $c a^{(i)}=\zeta^i a^{(i)}$,  $a^{(i)} \in L^{(i)}$.
  Define the commutator on $L(B,\gamma)$ by 
  \begin{equation}\label{EqMultTaftSimpleLiePresent}
  [\varphi^k(a),\varphi^\ell(b)]:=\left\lbrace
\begin{array}{rrr}
  \binom{k+\ell}{k}_\zeta\ \varphi^{k+\ell}[a,b]  & \text{if} & k+\ell < m,\\
  \gamma\frac{(k+\ell-m)!_\zeta}{k!_\zeta \ell!_\zeta}\ \varphi^{k+\ell-m}[a,b] 
  & \text{if} & k+\ell \geqslant m
  \end{array}\right.
  \end{equation}
  for all $a, b\in B$ and $0 \leqslant k,\ell < m$.
    Then $L(B,\gamma)$ is an $H_{m^2}(\zeta)$-simple Lie algebra.
\end{theorem}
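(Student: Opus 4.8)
The plan is to verify three things in turn: (1) that the commutator in~(\ref{EqMultTaftSimpleLiePresent}) makes $L(B,\gamma)$ into a Lie algebra (anti-commutativity and the Jacobi identity), (2) that the prescribed $c$- and $v$-actions make $L(B,\gamma)$ into a genuine $H_{m^2}(\zeta)$-module algebra, i.e. that they satisfy the Taft relations $c^m=\id$, $v^m=0$, $vc=\zeta cv$ and that $c$ acts by an automorphism while $v$ acts by a skew-derivation compatible with $\Delta(v)=c\otimes v + v\otimes 1$, and (3) that the resulting algebra is $H_{m^2}(\zeta)$-simple. The structural backbone throughout is the $\mathbb Z_m$-grading $L(B,\gamma)=\bigoplus_{i=0}^{m-1}L^{(i)}$ induced by $c$, so by Remark~\ref{RemarkTaftZmGrading} it suffices in step~(3) to check $\mathbb Z_m$-graded-simplicity rather than full $H_{m^2}(\zeta)$-simplicity.

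\textbf{Steps (1) and (2).} First I would observe that writing a general homogeneous element as $\varphi^k(a)$ with $a\in B$ lets me encode everything via the quantum binomial data. Anti-commutativity follows from $[a,b]=-[b,a]$ in $B$ together with the symmetry $\binom{k+\ell}{k}_\zeta=\binom{k+\ell}{\ell}_\zeta$ and the corresponding symmetry of $\frac{(k+\ell-m)!_\zeta}{k!_\zeta\,\ell!_\zeta}$ in $k,\ell$. For the Jacobi identity I would expand $[[\varphi^k a,\varphi^\ell b],\varphi^n c]$ cyclically; since $[a,b]$ already satisfies Jacobi in $B$, the identity reduces to a purely numerical relation among the quantum coefficients attached to the three index ranges (whether $k+\ell$, $\ell+n$, $k+n$ individually and the total $k+\ell+n$ exceed $m$), which is a quantum analogue of a Pascal-type identity. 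For step~(2), the relations $c^m=\id$ and $vc=\zeta cv$ are immediate from the grading and from $v(L^{(i)})\subseteq L^{(i-1)}$; the equation $v^m=0$ holds because $v^m$ shifts degrees by $m\equiv 0$ but kills $L^{(0)}$ at the boundary (recall $vB=0$ and $\varphi(L^{(m-1)})=0$). The skew-derivation property $v[x,y]=[cx,vy]+[vx,y]$ is again a numerical check: applied to $x=\varphi^k a$, $y=\varphi^\ell b$ it becomes an identity relating $\binom{k+\ell-1}{k-1}_\zeta$-type coefficients (from the left side) to a $\zeta^k$-weighted sum of two binomials (from the right side), and this is exactly the quantum Pascal recurrence $\binom{n}{k}_\zeta=\binom{n-1}{k-1}_\zeta+\zeta^k\binom{n-1}{k}_\zeta$, handled separately in the two ranges $k+\ell<m$ and $k+\ell\geqslant m$ (with the $\gamma$ factor passing through unchanged).

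\textbf{Step (3), simplicity.} Suppose $I$ is a nonzero $\mathbb Z_m$-graded ideal; I want to force $I=L(B,\gamma)$. Pick a nonzero homogeneous element $\varphi^k(a)\in I$ with $a\in B\setminus\{0\}$. Using simplicity of $B$, I have $[B,a]$ spanning $B$ (since $B=[B,B]$ and the ideal generated by $a$ is all of $B$), so bracketing $\varphi^k(a)$ with degree-$0$ elements $\varphi^0(b)=b\in B$ and invoking the top line of~(\ref{EqMultTaftSimpleLiePresent}) — whose coefficient $\binom{k}{0}_\zeta=1$ is nonzero — shows $L^{(k)}\subseteq I$. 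Then applying $v$ repeatedly (which maps $L^{(k)}$ isomorphically onto $L^{(k-1)},\dots,L^{(0)}$, since $v\varphi=\id$ on the relevant pieces) and $\varphi$ upward via brackets with degree-$1$ generators pushes $I$ into every homogeneous component, giving $I=L(B,\gamma)$. The one place needing care is showing I can climb \emph{up} the grading from $L^{(0)}$: here I bracket a degree-$0$ element with a degree-$1$ element $\varphi(b)$ and use that the coefficient $\binom{1}{1}_\zeta=1_\zeta\neq 0$ (as $\zeta$ is a primitive $m$th root and $m\geqslant 2$) keeps the product nonzero.

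\textbf{Main obstacle.} I expect the genuine difficulty to be step~(1), the Jacobi identity, because unlike the skew-derivation check it is not a single Pascal recurrence but a three-index identity in which the two-branch definition of the bracket forces a case analysis according to how many of the partial sums $k+\ell,\ell+n,k+n$ reach or exceed $m$, and one must verify that the $\gamma$-twisted ``wrap-around'' coefficients combine with the untwisted ones to reproduce the quantum factorial identity $\frac{(k+\ell+n)!_\zeta}{k!_\zeta\,\ell!_\zeta\,n!_\zeta}$ consistently across all cases. This is the quantum analogue of the fact that iterated divided-power brackets are associative up to symmetric coefficients, and the cleanest route is probably to recognize the coefficients in~(\ref{EqMultTaftSimpleLiePresent}) as values of a single multiplicative function on $\mathbb Z_m$-indexed pairs and reduce Jacobi to the cocycle-type consistency of that function; the explicit verification can be relegated to a computation parallel to the induction already recorded for $L_\alpha(B)$ in Section~\ref{SectionClassTaftSLieSS}.
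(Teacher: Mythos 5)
Your steps (1) and (2) are in line with the paper: the paper only writes out the compatibility $v[u,w]=[cu,vw]+[vu,w]$ in detail, by exactly the case analysis and quantum Pascal identity $\binom{k+\ell}{k}_\zeta=\zeta^k\binom{k+\ell-1}{k}_\zeta+\binom{k+\ell-1}{k-1}_\zeta$ that you describe, and leaves the remaining axioms (including Jacobi) to ``explicit verification,'' so your extra attention to the Jacobi identity is reasonable but not where the comparison breaks down.

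The genuine gap is in step (3). You propose to prove that $L(B,\gamma)$ is $\mathbb Z_m$-\emph{graded}-simple and to deduce $H_{m^2}(\zeta)$-simplicity from that. The implication is valid, but the premise is \emph{false} in the case that matters most: for $\gamma=0$ the subspace $\bigoplus_{i=1}^{m-1}L^{(i)}$ is a nonzero proper graded ideal (any bracket with $k+\ell\geqslant m$ carries the factor $\gamma=0$, and brackets with $k+\ell<m$, $k+\ell\geqslant 1$ land in positive degree) --- this is exactly the solvable radical exhibited in the remark following the theorem. Your argument conceals this because the step ``applying $v$ repeatedly'' silently assumes $I$ is $v$-stable, which a merely graded ideal need not be: the ideal above satisfies $vL^{(1)}=L^{(0)}\not\subseteq\bigoplus_{i\geqslant 1}L^{(i)}$. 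The repair is to argue directly with $H_{m^2}(\zeta)$-invariant ideals $I$, for which $v$-stability is part of the hypothesis: since $v^mI=0$, choose $t$ maximal with $v^tI\neq 0$, so that $0\neq v^tI\subseteq I\cap\ker v=I\cap B$; simplicity of $B=\ker v$ forces $B\subseteq I$, and then $[B,L^{(i)}]=[B,\varphi^i(B)]=\varphi^i[B,B]=L^{(i)}$ gives $I=L(B,\gamma)$. (This is the paper's argument; your ``climbing up'' via brackets with $B$ is the same final step, and note also that for a single $a\in B$ the set $[B,a]$ need not span $B$ --- one needs the \emph{ideal} generated by $a$, i.e.\ iterated brackets, which still works since degree-$0$ brackets act on $L^{(k)}$ through $\varphi^k$.)
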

\begin{proof}
An explicit verification shows that the formulas indeed define on $L(B,\gamma)$ a structure of an $H_{m^2}(\zeta)$-module Lie algebra. Here we check only that $v[u,w]=[v_{(1)}u, v_{(2)}w]$
for all $u,w\in L(B,\gamma)$.

Let $0\leqslant k,\ell < m$ and $a,b\in B$.
If $k=\ell=0$, then $v[a,b]=0=[ca,vb]+[va,b]$.

If $k=0$, $\ell > 0$, then
  $$v[a, \varphi^\ell(b)]=v \varphi^\ell[a,b]= \varphi^{\ell-1}[a,b]=
 [a,\varphi^{\ell-1}(b)]
 =  [ca, v\varphi^\ell(b)]+[va, \varphi^\ell(b)].$$

If $k > 0$, $\ell=0$, 
 then $$v[\varphi^k(a), b]=v \varphi^k[a,b]= \varphi^{k-1}[a,b]=
 [\varphi^{k-1}(a), b]
 =  [c\varphi^k(a), vb]+[v\varphi^k(a), b].$$

If $k,\ell > 0$, $k+\ell < m$, then
 \begin{equation*}\begin{split}v[\varphi^k(a),\varphi^\ell(b)] = \binom{k+\ell}{k}_\zeta \varphi^{k+\ell-1}[a,b]
 = \left(\zeta^k\binom{k+\ell-1}{k}_\zeta+\binom{k+\ell-1}{k-1}_\zeta\right) \varphi^{k+\ell-1}[a,b]\\= [c\varphi^k(a),\varphi^{\ell-1}(b)] + [\varphi^{k-1}(a),\varphi^\ell(b)]= [c\varphi^k(a),v\varphi^\ell(b)] + [v\varphi^k(a),\varphi^\ell(b)]\end{split}\end{equation*}
since \begin{equation}\begin{split}\label{EquationMainQuantumBinomial}
\zeta^k \binom{k+\ell-1}{k}_\zeta + \binom{k+\ell-1}{k-1}_\zeta
= \frac{\zeta^k (k+\ell-1)!_\zeta}{k!_\zeta (\ell-1)!_\zeta}+
\frac{(k+\ell-1)!_\zeta}{(k-1)!_\zeta \ell!_\zeta}\\=(\zeta^k \ell_\zeta + k_\zeta)\frac{(k+\ell-1)!_\zeta}{k!_\zeta \ell!_\zeta}=(k+\ell)_\zeta \frac{(k+\ell-1)!_\zeta}{k!_\zeta \ell!_\zeta}
= \frac{(k+\ell)!_\zeta}{k!_\zeta \ell!_\zeta}=\binom{k+\ell}{k}_\zeta.\end{split}\end{equation}

If $k+\ell \geqslant m$, then $k,\ell > 0$.
If $k+\ell =m$, then $(k+\ell)_\zeta = m_\zeta = 0$
and
 \begin{equation*}\begin{split}v[\varphi^k(a),\varphi^\ell(b)] = \frac{\gamma}{k!_\zeta \ell!_\zeta}\ v[a,b]= 0 = \binom{k+\ell}{k}_\zeta \varphi^{k+\ell-1}[a,b]
 \\= \left(\zeta^k\binom{k+\ell-1}{k}_\zeta+\binom{k+\ell-1}{k-1}_\zeta\right) \varphi^{k+\ell-1}[a,b]= [c\varphi^k(a),\varphi^{\ell-1}(b)] + [\varphi^{k-1}(a),\varphi^\ell(b)]\\= [c\varphi^k(a),v\varphi^\ell(b)] + [v\varphi^k(a),\varphi^\ell(b)].\end{split}\end{equation*}

If $k+\ell >m$, then $(k+\ell)_\zeta = (k+\ell-m)_\zeta + \zeta^{k+\ell-m} m_\zeta = (k+\ell-m)_\zeta$
and
 \begin{equation*}\begin{split}v[\varphi^k(a),\varphi^\ell(b)] = \gamma\frac{(k+\ell-m)!_\zeta}{k!_\zeta \ell!_\zeta}\ \varphi^{k+\ell-m-1}[a,b] 
 \\=
\gamma(k+\ell)_\zeta\frac{(k+\ell-m-1)!_\zeta}{k!_\zeta \ell!_\zeta}\ \varphi^{k+\ell-m-1}[a,b] 
 \\= 
  \gamma\left(\zeta^k \frac{(k+\ell-m-1)!_\zeta}{k!_\zeta (\ell-1)!_\zeta}+\frac{(k+\ell-m-1)!_\zeta}{(k-1)!_\zeta \ell!_\zeta}\right) \varphi^{k+\ell-m-1}[a,b]\\= [c\varphi^k(a),\varphi^{\ell-1}(b)] + [\varphi^{k-1}(a),\varphi^\ell(b)]\\= [c\varphi^k(a),v\varphi^\ell(b)] + [v\varphi^k(a),\varphi^\ell(b)].\end{split}\end{equation*}

We have considered all possible variants for $0\leqslant k,\ell < m$.
Hence $v[u,w]=[v_{(1)}u, v_{(2)}w]$
for all $u,w\in L(B,\gamma)$.

Suppose that $I$ is an $H_{m^2}(\zeta)$-invariant ideal of $L$. Then $v^m I = 0$.
Let $t \in \mathbb Z_+$ such that $v^t I \ne 0$, $v^{t+1} I = 0$. Then $0 \ne v^t I \subseteq I \cap \ker v$. However, $\ker v = B$ is a simple Lie algebra. Thus $I \cap \ker v =\ker v$ and $\ker v \subseteq I$. Since $$[\ker v, L^{(i)}]=[B,\varphi^i(B)]=\varphi^i[B,B]=\varphi^i(B)=L^{(i)}\text{ for all }1\leqslant i \leqslant m-1,$$
we get $I = L(B,\gamma)$. Therefore, $L(B,\gamma)$ is an  $H_{m^2}(\zeta)$-simple Lie algebra.
\end{proof}
\begin{remark}
Lie algebras $L(B,0)$ are not semisimple. The solvable radical of $L(B,0)$ coincides with
the nilpotent radical and equals $\bigoplus_{i=1}^{m-1} L^{(i)}$.
\end{remark}

In Theorem~\ref{TheoremTaftSimpleLieEquivDef} below
we prove that if the field $F$ is algebraically closed and $\gamma \ne 0$,
then $L(B, \gamma)$ is isomorphic to one of the non-simple $\mathbb Z_m$-graded simple $H_{m^2}(\zeta)$-module Lie algebras $L_\alpha(B)$ defined in Section~\ref{SectionClassTaftSLieSS}.

\begin{theorem}\label{TheoremTaftSimpleLieEquivDef}
Let $B$ be a simple Lie algebra over a field $F$.
 Suppose $F$ contains some primitive $m$th root of unity $\zeta$. Let $\alpha \in F$, $\alpha \ne 0$.
Then $L(B, \frac{1}{\alpha^m (1-\zeta)^m}) \cong L_\alpha(B)$ 
as $H_{m^2}(\zeta)$-module Lie algebras.
 \end{theorem}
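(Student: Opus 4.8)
The plan is to diagonalize the $c$-action on $L_\alpha(B)$, thereby writing down an explicit homogeneous basis of its $\mathbb Z_m$-grading, and then to exhibit a grading-preserving linear bijection onto $L(B,\gamma)$ that intertwines both the $v$-action and the bracket. For $a\in B$ and $0\leqslant i\leqslant m-1$ set $u_i(a):=(a,\zeta^{-i}a,\zeta^{-2i}a,\ldots,\zeta^{-(m-1)i}a)\in L_\alpha(B)$. A direct check using~(\ref{EqLieTaftSimpleSSDefc}) gives $cu_i(a)=\zeta^i u_i(a)$, so $u_i$ maps $B$ isomorphically onto the homogeneous component of degree $i$; since the $m$ eigenspaces of $c$ are independent and $\dim L_\alpha(B)=m\dim B$, the subspaces $u_i(B)$ together exhaust $L_\alpha(B)$.

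First I would record the two structural formulas for these vectors. Substituting $u_i(a)$ into~(\ref{EqLieTaftSimpleSSDefv}) and using $\zeta^m=1$ yields $vu_i(a)=\alpha(1-\zeta^i)\,u_{i-1}(a)$; in particular $vu_0(a)=0$, consistent with $u_0(B)$ being the image of $\ker v$. Since $L_\alpha(B)$ is a direct sum of ideals, the bracket is computed componentwise, which gives $[u_k(a),u_\ell(b)]=u_{k+\ell}([a,b])$, where the index $k+\ell$ is read modulo $m$.

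Next I define the candidate isomorphism $\Psi\colon L(B,\gamma)\to L_\alpha(B)$ on the homogeneous generators by $\Psi(\varphi^k(b)):=\lambda_k\,u_k(b)$ for scalars $\lambda_k\in F$ to be determined. Because $\Psi$ preserves the grading, its $c$-equivariance is automatic. Imposing $\Psi(v\varphi^k(b))=v\Psi(\varphi^k(b))$ and using $v\varphi^k(b)=\varphi^{k-1}(b)$ forces the recursion $\lambda_{k-1}=\alpha(1-\zeta^k)\lambda_k$; taking $\lambda_0=1$ and invoking the identity $1-\zeta^j=(1-\zeta)\,j_\zeta$ (so that $\prod_{j=1}^k(1-\zeta^j)=(1-\zeta)^k\,k!_\zeta$) gives $\lambda_k=\dfrac{1}{\alpha^k(1-\zeta)^k\,k!_\zeta}$. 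Each $\lambda_k$ is nonzero, since $\alpha\neq0$ and $j_\zeta\neq0$ for $1\leqslant j\leqslant m-1$, so $\Psi$ is a grading-preserving linear bijection that is both $c$- and $v$-equivariant.

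The only remaining point — and the crux of the proof — is that $\Psi$ respects the bracket. Applying $\Psi$ to~(\ref{EqMultTaftSimpleLiePresent}) and comparing with $[\lambda_k u_k(a),\lambda_\ell u_\ell(b)]=\lambda_k\lambda_\ell\,u_{k+\ell}([a,b])$ reduces everything to two scalar identities: $\binom{k+\ell}{k}_\zeta\lambda_{k+\ell}=\lambda_k\lambda_\ell$ when $k+\ell<m$, and $\gamma\,\dfrac{(k+\ell-m)!_\zeta}{k!_\zeta\,\ell!_\zeta}\,\lambda_{k+\ell-m}=\lambda_k\lambda_\ell$ when $k+\ell\geqslant m$. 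With the closed form for $\lambda_k$ the first identity collapses to $\lambda_k\lambda_\ell=\dfrac{1}{\alpha^{k+\ell}(1-\zeta)^{k+\ell}\,k!_\zeta\,\ell!_\zeta}$, which is immediate, while the second reduces after cancellation to $\gamma=\dfrac{1}{\alpha^m(1-\zeta)^m}$ — exactly the hypothesis. Thus the wrap-around case $k+\ell\geqslant m$ is precisely where the chosen value of $\gamma$ must be calibrated, and verifying this calibration is the main (though purely computational) obstacle; with it in hand, $\Psi$ is the desired isomorphism of $H_{m^2}(\zeta)$-module algebras.
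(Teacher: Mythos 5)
Your proposal is correct and follows essentially the same route as the paper: the paper likewise identifies $L_\alpha(B)^{(k)}$ with the vectors $u_k(b)$, defines $\varphi$ so that $\varphi^k(b,\ldots,b)=\frac{1}{\alpha^k(1-\zeta)^k k!_\zeta}\,u_k(b)$ (your $\lambda_k$), and matches the bracket against~(\ref{EqMultTaftSimpleLiePresent}). Your write-up merely makes explicit the two scalar identities that the paper leaves as an implicit computation, including the wrap-around case that pins down $\gamma=\frac{1}{\alpha^m(1-\zeta)^m}$.
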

\begin{proof}
Note that $$L_\alpha(B)^{(k)}=\left\lbrace\left(b, \zeta^{-k} b, \zeta^{-2k} b, \ldots, \zeta^{-(m-1)k}b\right)
\mathrel{\bigl|} b \in B\right\rbrace$$ for $0\leqslant k \leqslant m-1$. In particular, $L_\alpha(B)^{(0)}\cong B$.
Define $$\varphi(b, \zeta^{-k} b, \zeta^{-2k} b, \ldots, \zeta^{-(m-1)k}b)
:= \frac{1}{\alpha(1-\zeta^{k+1})}\left(b, \zeta^{-(k+1)} b, \zeta^{-2(k+1)} b, \ldots, \zeta^{-(m-1)(k+1)}b\right)$$
for all $b\in B$ and $0\leqslant k < m-1$.
Then  \begin{equation*}\begin{split}\varphi^k(b,b,\ldots, b) = \frac{1}{\alpha^k(1-\zeta)(1-\zeta^2)\ldots (1-\zeta^k)}
(b, \zeta^{-k} b, \zeta^{-2k} b, \ldots, \zeta^{-(m-1)k}b)\\=\frac{1}{\alpha^k(1-\zeta)^k k!_\zeta}
(b, \zeta^{-k} b, \zeta^{-2k} b, \ldots, \zeta^{-(m-1)k}b).\end{split}\end{equation*}
Note that
$L_\alpha(B)^{(k)}=\varphi^k(L_\alpha(B)^{(0)})$ and $v\varphi(a)=a$ for all $a\in L_\alpha(B)^{(k)}$, $0\leqslant k < m-1$. 
Moreover, $[\varphi^k(a),\varphi^\ell(b)]$ can be calculated using~(\ref{EqMultTaftSimpleLiePresent})
for $\gamma=\frac{1}{\alpha^m (1-\zeta)^m}$ for all $a,b \in L_\alpha(B)^{(0)}$ and $0\leqslant k,\ell < m$.
Hence $L_\alpha(B) \cong L(B, \frac{1}{\alpha^m (1-\zeta)^m})$ as $H_{m^2}(\zeta)$-module Lie algebras.
\end{proof}

Now we prove several lemmas on $H_{m^2}(\zeta)$-module
Lie algebras.

\begin{lemma}\label{LemmaLieTaftSimpleSGradVGrad}
Let $L$ be an $H_{m^2}(\zeta)$-module Lie algebra over a field $F$. 
Then 
\begin{equation}\label{EqLieTaftSimpleSGradMultLR}(\zeta^k-1)[a^{(k)},v b^{(\ell)}] = (\zeta^\ell-1) [v a^{(k)}, b^{(\ell)}],\end{equation}
\begin{equation}\label{EqLieTaftSimpleSGradMultProd}(\zeta^\ell-1)v[a^{(k)},b^{(\ell)}]=(\zeta^{k+\ell}-1) [a^{(k)},v b^{(\ell)}]\end{equation}
for all $a^{(k)}\in L^{(k)}$, $b^{(\ell)}\in L^{(\ell)}$
in the natural $\mathbb Z_m$-grading induced by the $c$-action.
Moreover, if $L$ is a $\mathbb Z_m$-graded simple Lie algebra with respect to this grading,
$vL^{(0)}=0$.
\end{lemma}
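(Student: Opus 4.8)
The plan is to derive everything from two ingredients: that the skew-derivation $v$ lowers the $\mathbb Z_m$-degree by one, and that the $H$-module algebra axiom reads $v[u,w]=[cu,vw]+[vu,w]$ on account of $\Delta(v)=c\otimes v+v\otimes 1$. First I would note that $c(va^{(k)})=\zeta^{-1}v(ca^{(k)})=\zeta^{k-1}va^{(k)}$ by $vc=\zeta cv$, so $vL^{(k)}\subseteq L^{(k-1)}$; this ensures that every bracket written below is homogeneous of degree $k+\ell-1$ and that the two identities make sense componentwise. Evaluating the module-algebra axiom on homogeneous elements then gives the basic relation
\begin{equation*}
v[a^{(k)},b^{(\ell)}]=\zeta^k[a^{(k)},vb^{(\ell)}]+[va^{(k)},b^{(\ell)}].\tag{$\star$}
\end{equation*}

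To prove \eqref{EqLieTaftSimpleSGradMultLR} I would apply $(\star)$ a second time, now to the reversed bracket $[b^{(\ell)},a^{(k)}]$, and rewrite both sides using anticommutativity. This produces the companion expression $v[a^{(k)},b^{(\ell)}]=\zeta^\ell[va^{(k)},b^{(\ell)}]+[a^{(k)},vb^{(\ell)}]$; subtracting it from $(\star)$ cancels $v[a^{(k)},b^{(\ell)}]$ and leaves precisely $(\zeta^k-1)[a^{(k)},vb^{(\ell)}]=(\zeta^\ell-1)[va^{(k)},b^{(\ell)}]$. Then \eqref{EqLieTaftSimpleSGradMultProd} drops out by multiplying $(\star)$ through by $(\zeta^\ell-1)$ and substituting \eqref{EqLieTaftSimpleSGradMultLR} into the term $(\zeta^\ell-1)[va^{(k)},b^{(\ell)}]$; the coefficient of $[a^{(k)},vb^{(\ell)}]$ collapses via $\zeta^k(\zeta^\ell-1)+(\zeta^k-1)=\zeta^{k+\ell}-1$. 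Both of these are routine formal manipulations and I expect no difficulty there.

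The substantive point is the final assertion. Putting $k=0$ in \eqref{EqLieTaftSimpleSGradMultLR} gives $0=(\zeta^\ell-1)[va^{(0)},b^{(\ell)}]$, so $[vL^{(0)},L^{(\ell)}]=0$ for every $\ell\not\equiv 0\pmod m$, where $\zeta^\ell\ne 1$. Since also $vL^{(0)}\subseteq L^{(m-1)}$, the only brackets of $vL^{(0)}$ that can survive are those with $L^{(0)}$, and these again lie in $L^{(m-1)}$. I would then take $U$ to be the ideal generated by $vL^{(0)}$ and argue, by the Jacobi identity $[[w,x],y]=[w,[x,y]]-[x,[w,y]]$ (with $w\in vL^{(0)}$, $x\in L^{(0)}$, $y\in L^{(\ell)}$) together with $[vL^{(0)},L^{(\ell)}]=0$, and by induction on the number of brackets, that $[U,L^{(\ell)}]=0$ persists for all $\ell\not\equiv 0$. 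Consequently $U\subseteq L^{(m-1)}$ is a homogeneous, hence graded, ideal. Graded-simplicity forces $U=0$ or $U=L$; but $U=L$ would give $L=L^{(m-1)}$ and therefore $[L,L]\subseteq L^{(m-2)}=0$ for $m\geqslant 2$, contradicting $[L,L]\ne 0$. Hence $U=0$ and $vL^{(0)}=0$.

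The main obstacle is this last structural step: one must check that repeatedly bracketing $vL^{(0)}$ with $L^{(0)}$ never leaves degree $m-1$, i.e. that the ideal it generates stays homogeneous, which is exactly where the Jacobi computation combined with the vanishing $[vL^{(0)},L^{(\ell)}]=0$ for $\ell\not\equiv 0$ is essential. Once that is in place, the conclusion follows immediately from graded-simplicity; the two displayed identities, by contrast, are formal consequences of the coproduct of $v$ and anticommutativity.
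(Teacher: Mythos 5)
Your proof is correct. The derivation of \eqref{EqLieTaftSimpleSGradMultLR} and \eqref{EqLieTaftSimpleSGradMultProd} from the relation $v[u,w]=[cu,vw]+[vu,w]$ and anticommutativity is exactly the paper's computation. For the final assertion, however, you take a genuinely different route. The paper specializes \eqref{EqLieTaftSimpleSGradMultProd} to $k+\ell\equiv 0\pmod m$, $\ell\ne 0$, to get $v[L^{(k)},L^{(m-k)}]=0$, then uses graded-simplicity to generate $L$ by a single homogeneous element of nonzero degree (treating $L=L^{(0)}$ separately) and performs a Jacobi rearrangement to write every element of $L^{(0)}$ as a combination of long commutators whose last entry has nonzero degree, so that $v$ kills $L^{(0)}$ directly. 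You instead specialize \eqref{EqLieTaftSimpleSGradMultLR} at $k=0$ to get $[vL^{(0)},L^{(\ell)}]=0$ for $\ell\not\equiv 0$, and show by the Jacobi identity that the ideal $U$ generated by $vL^{(0)}$ stays inside $L^{(m-1)}$; graded-simplicity then forces $U=0$, since $U=L$ would give $L=L^{(m-1)}$ and hence $[L,L]\subseteq L^{(m-2)}=0$. Your version is somewhat cleaner: it avoids the bookkeeping of pushing degree-zero factors out of the last slot of a long commutator, and it absorbs the degenerate case $L=L^{(0)}$ uniformly (there $vL^{(0)}\subseteq L^{(m-1)}=0$ at once). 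What the paper's route buys is the auxiliary identity $v[L^{(k)},L^{(m-k)}]=0$, which it states explicitly and which fits the pattern of arguments reused in the subsequent lemmas; your route produces instead the (also useful) vanishing $[vL^{(0)},L^{(\ell)}]=0$. Both arguments use graded-simplicity in an essential and legitimate way.
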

\begin{proof}
Note that $$v[a^{(k)},b^{(\ell)}]=[ca^{(k)}, vb^{(\ell)}]+[va^{(k)}, b^{(\ell)}]=\zeta^k [a^{(k)},v b^{(\ell)}]
+[va^{(k)}, b^{(\ell)}]$$ for all $a^{(k)}\in L^{(k)}$
and $b^{(\ell)} \in L^{(\ell)}$.
At the same time 
\begin{equation*}\begin{split}v[a^{(k)},b^{(\ell)}]=-v[b^{(\ell)},a^{(k)}]=-[cb^{(\ell)}, va^{(k)}]-[vb^{(\ell)}, a^{(k)}]\\ = -\zeta^\ell [b^{(\ell)},v a^{(k)}]
-[vb^{(\ell)}, a^{(k)}]=[a^{(k)}, vb^{(\ell)}]+\zeta^\ell [v a^{(k)}, b^{(\ell)}].\end{split}\end{equation*}
Hence we obtain~(\ref{EqLieTaftSimpleSGradMultLR})
and~(\ref{EqLieTaftSimpleSGradMultProd})
and if $\ell \ne 0$, we get
$$ v[a^{(k)},b^{(\ell)}]=\frac{\zeta^{k+\ell}-1}{\zeta^\ell-1} [a^{(k)},v b^{(\ell)}].
$$
In particular, $v[L^{(k)}, L^{(m-k)}]=0$ for all $1\leqslant k \leqslant m-1$.

Suppose $L$ is a $\mathbb Z_m$-graded simple Lie algebra.
If $L= L^{(0)}$, then $c$ is acting trivially and $vc=\zeta cv$
implies $vL=0$.
Therefore, we may assume that $L\ne L^{(0)}$. Let $a \in L^{(k)}$, $k\ne 0$, $a\ne 0$.
Since $L$ is $\mathbb Z_m$-graded simple and $a$ is homogeneous, $a$ generates $L$ as an ideal.
Thus $L^{(0)}$ is an $F$-linear span of elements
$[a, a_1, \ldots, a_n]$, where $a_i \in L^{(k_i)}$, $0\leqslant k_i \leqslant m-1$,
$m \mid (k+k_1+\ldots+k_n)$, $n\in \mathbb N$.
(Here we use long commutators $[x_1, \ldots, x_n]
:= [[\ldots [[x_1, x_2], x_3], \ldots], x_n]$.)
If $k_n \ne 0$,
then $[a, a_1, \ldots, a_n]=[[a, a_1, \ldots, a_{n-1}], a_n]$
implies $[a, a_1, \ldots, a_n]\in [L^{(m-k_n)}, L^{(k_n)}]$
and $v[a, a_1, \ldots, a_n]=0$.
If $k_n = 0$, we apply the Jacobi identity
and rewrite $[a, a_1, \ldots, a_n]$ as a sum of
$[[a,a_n], a_1, \ldots, a_{n-1}]$, $[a, [a_1,a_n], \ldots, a_{n-1}]$, and
 $[a, a_1, \ldots, [a_{n-1},a_n]]$. If $a_{n-1} \in L^{(0)}$, we continue this procedure.
 Finally, we get the situation where the last components in long commutators
 belong to $L^{(k_i)}$, $k_i \ne 0$. Applying the arguments used above,
 we get $v[a, a_1, \ldots, a_n]=0$. 
 Hence $v L^{(0)}=0$.
\end{proof}

\begin{lemma}\label{LemmaLieTaftSimpleSGradTrois}
Let $L$ be an $H_{m^2}(\zeta)$-module Lie algebra over a field $F$ of characteristic $\ch F \nmid m$, $\ch F \ne 2$. Let $a^{(\ell)} \in L^{(\ell)}$, $b^{(k)}\in L^{(k)}$, $u^{(m-k)}\in L^{(m-k)}$ for some $1\leqslant k, \ell \leqslant m-1$.
Then $$v[a^{(\ell)},[b^{(k)}, u^{(m-k)}]] = \frac{\zeta^\ell-1}{\zeta^{m-k}-1}[a^{(\ell)},[b^{(k)}, vu^{(m-k)}]].$$
\end{lemma}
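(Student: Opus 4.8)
For brevity write $a=a^{(\ell)}$, $b=b^{(k)}$, $u=u^{(m-k)}$, and recall from Lemma~\ref{LemmaLieTaftSimpleSGradVGrad} that $v[b,u]=0$. The plan is to reduce the assertion to a single relation among triple commutators and then to prove that relation, the main difficulty lying in a degenerate case where the hypothesis $\ch F\ne2$ becomes indispensable. Since $v[b,u]=0$, the skew-derivation property gives $v[a,[b,u]]=[va,[b,u]]$, the summand containing $ca$ dropping out. Expanding the right-hand side by the Jacobi identity as $[[va,b],u]+[b,[va,u]]$ and rewriting $[va,b]$ and $[va,u]$ as multiples of $[a,vb]$ and $[a,vu]$ by means of~(\ref{EqLieTaftSimpleSGradMultLR}), while expanding the target commutator $[a,[b,vu]]$ by Jacobi as $[[a,b],vu]+[b,[a,vu]]$, the matching summands $[b,[a,vu]]$ cancel and, after removing the nonzero factor $\zeta^\ell-1$, the whole statement reduces to
\begin{equation*}
(\zeta^{m-k}-1)[[a,vb],u]=(\zeta^{k}-1)[[a,b],vu].\tag{$\star$}
\end{equation*}

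To prove $(\star)$ I would evaluate $v[[a,b],u]$ in two ways. Applying~(\ref{EqLieTaftSimpleSGradMultProd}) to the outer commutator, with $[a,b]\in L^{(k+\ell)}$ as first factor and $u$ as second, gives $v[[a,b],u]=\frac{\zeta^\ell-1}{\zeta^{m-k}-1}[[a,b],vu]$. On the other hand, the skew-derivation property together with $v[a,b]=\zeta^\ell[a,vb]+[va,b]$ and a further use of~(\ref{EqLieTaftSimpleSGradMultLR}) gives $v[[a,b],u]=\zeta^{k+\ell}[[a,b],vu]+\frac{\zeta^{k+\ell}-1}{\zeta^{k}-1}[[a,vb],u]$. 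Equating the two and simplifying yields $\frac{\zeta^{k+\ell}-1}{\zeta^{m-k}-1}[[a,b],vu]=\frac{\zeta^{k+\ell}-1}{\zeta^{k}-1}[[a,vb],u]$, which is exactly $(\star)$ multiplied by $\zeta^{k+\ell}-1$. Hence whenever $k+\ell\not\equiv0\pmod m$ this factor is nonzero and may be cancelled, establishing $(\star)$.

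The main obstacle is the degenerate case $k+\ell\equiv0\pmod m$, i.e. $k+\ell=m$, in which $\zeta^{k+\ell}-1=0$ and the cancellation above is unavailable. Here $m-k=\ell$, so $[a,b]\in L^{(0)}$ is fixed by $c$ and annihilated by $v$. Expanding $[[a,vb],u]$ and $[[a,b],vu]$ by Jacobi and substituting $[vb,u]=-\zeta^{k}[b,vu]$ (the content of $v[b,u]=0$), one finds that the $[a,[b,vu]]$ terms cancel and $(\star)$ becomes
\begin{equation*}
(\zeta^{k}-1)[b,[a,vu]]=(\zeta^\ell-1)[vb,[a,u]].\tag{$\star\star$}
\end{equation*}
When $k\ne\ell$ I would settle $(\star\star)$ by applying~(\ref{EqLieTaftSimpleSGradMultProd}) to $[a,u]$, which gives $v[a,u]=(\zeta^\ell+1)[a,vu]$, and~(\ref{EqLieTaftSimpleSGradMultLR}) to the pair $(b,[a,u])$; together these express $[vb,[a,u]]$ as a multiple of $[b,[a,vu]]$, the division by $\zeta^{2\ell}-1=(\zeta^\ell-1)(\zeta^\ell+1)$ being legitimate precisely because $\zeta^\ell\ne-1$.

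The genuinely delicate sub-case is $m$ even with $k=\ell=m/2$, where $\zeta^\ell=-1$ forces $\zeta^{2\ell}-1=0$ and the previous division breaks down. In this situation $a,b,u$ all lie in the single component $L^{(m/2)}$, every commutator of two of them lies in $[L^{(m/2)},L^{(m/2)}]$ and is killed by $v$, and hence $[vx,y]=[x,vy]$ for any two of $a,b,u$. I would then compare the direct evaluation $v[[a,b],u]=[[a,b],vu]$ with the one coming from the Jacobi expansion $[[a,b],u]=[a,[b,u]]-[b,[a,u]]$; rewriting the resulting triple commutators by means of the symmetric relation $[vx,y]=[x,vy]$, this comparison collapses to $2[[a,b],vu]=2[[a,vb],u]$, which is $(\star)$ scaled by $2$. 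Dividing by $2$, which is exactly where $\ch F\ne2$ enters, yields $(\star)$ and completes the proof.
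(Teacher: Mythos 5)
Your proof is correct, but it is organized quite differently from the paper's. The paper proves the lemma by a single uniform chain of rewritings: starting from $[a^{(\ell)},[b^{(k)},vu^{(m-k)}]]$ it repeatedly applies~(\ref{EqLieTaftSimpleSGradMultLR}), (\ref{EqLieTaftSimpleSGradMultProd}) and the Jacobi identity until the original expression reappears with a minus sign on the right, arriving at $[a^{(\ell)},[b^{(k)},vu^{(m-k)}]] = 2\frac{\zeta^{m-k}-1}{\zeta^{\ell}-1}v[a^{(\ell)},[b^{(k)},u^{(m-k)}]]-[a^{(\ell)},[b^{(k)},vu^{(m-k)}]]$, and then divides by $2$; no case distinction is made, and $\ch F\ne 2$ is used for every $k,\ell$. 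You instead first reduce the lemma to the single relation $(\star)$ via $v[b,u]=0$ and two Jacobi expansions (a reduction the paper does not make explicit), and then verify $(\star)$ by a three-way case analysis on whether $\zeta^{k+\ell}=1$ and whether $\zeta^{2\ell}=1$. I checked each case: the double evaluation of $v[[a,b],u]$ in the generic case, the passage from $(\star)$ to $(\star\star)$ when $k+\ell=m$, the division by $\zeta^\ell+1$ when $k\ne\ell$, and the final symmetric computation with $[vx,y]=[x,vy]$ when $k=\ell=m/2$ all work. What your route buys is a sharper picture of the hypotheses: it shows that the division by $2$, and hence the assumption $\ch F\ne 2$, is genuinely needed only in the single degenerate configuration $m$ even, $k=\ell=m/2$, whereas the paper's uniform computation invokes it everywhere. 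The cost is length and bookkeeping: three cases with separate coefficient manipulations versus one display. One cosmetic remark: the fact $v[b^{(k)},u^{(m-k)}]=0$ is established inside the proof of Lemma~\ref{LemmaLieTaftSimpleSGradVGrad} rather than in its statement (the statement's conclusion $vL^{(0)}=0$ assumes graded-simplicity, which the present lemma does not); since it is an immediate consequence of~(\ref{EqLieTaftSimpleSGradMultProd}) with $\ell=m-k$, you should cite that equation directly rather than the lemma.
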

\begin{proof} By~(\ref{EqLieTaftSimpleSGradMultLR}), (\ref{EqLieTaftSimpleSGradMultProd}) and the Jacobi identity,
\begin{equation*}\begin{split}[a^{(\ell)},[b^{(k)}, vu^{(m-k)}]]=-\frac{\zeta^{m-k}-1}{\zeta^{k}-1}[a^{(\ell)},[u^{(m-k)}, vb^{(k)}]]
\\ =-\frac{\zeta^{m-k}-1}{\zeta^{k}-1}([[a^{(\ell)},u^{(m-k)}], vb^{(k)}]+ [u^{(m-k)}, [a^{(\ell)},vb^{(k)}]]) \\ =-\frac{\zeta^{m-k}-1}{\zeta^{\ell}-1}v[a^{(\ell)},u^{(m-k)}, b^{(k)}]
+\frac{\zeta^{m-k}-1}{\zeta^{\ell}-1}[u^{(m-k)}, [b^{(k)},va^{(\ell)}]]
\\=-\frac{\zeta^{m-k}-1}{\zeta^{\ell}-1}v[a^{(\ell)},u^{(m-k)}, b^{(k)}]
+\frac{\zeta^{m-k}-1}{\zeta^{\ell}-1}([[u^{(m-k)}, b^{(k)}],va^{(\ell)}]
+ [b^{(k)},[u^{(m-k)}, va^{(\ell)}]])\\=\frac{\zeta^{m-k}-1}{\zeta^{\ell}-1}v\left(-[a^{(\ell)},u^{(m-k)}, b^{(k)}]+[u^{(m-k)}, b^{(k)},a^{(\ell)}]\right)-[b^{(k)},[a^{(\ell)}, vu^{(m-k)}]]\\=
\frac{\zeta^{m-k}-1}{\zeta^{\ell}-1}v\left(-[a^{(\ell)},u^{(m-k)}, b^{(k)}]+[u^{(m-k)}, b^{(k)},a^{(\ell)}]\right)\\-[[b^{(k)},a^{(\ell)}], vu^{(m-k)}]-[a^{(\ell)}, [b^{(k)},vu^{(m-k)}]]
\\=\frac{\zeta^{m-k}-1}{\zeta^{\ell}-1}v\left(-[a^{(\ell)},u^{(m-k)}, b^{(k)}]+[u^{(m-k)}, b^{(k)},a^{(\ell)}]-[b^{(k)},a^{(\ell)}, u^{(m-k)}]\right)\\-[a^{(\ell)}, [b^{(k)},vu^{(m-k)}]]
\\= 2\frac{\zeta^{m-k}-1}{\zeta^{\ell}-1} v[a^{(\ell)}, [b^{(k)}, u^{(m-k)}]]-[a^{(\ell)}, [b^{(k)},vu^{(m-k)}]]
\end{split}\end{equation*} and the lemma follows.
\end{proof}

\begin{lemma}\label{LemmaLieTaftSimpleSGradVMany}
Let $L$ be an $H_{m^2}(\zeta)$-module Lie algebra over a field $F$ of characteristic $\ch F \nmid m$, $\ch F \ne 2$. Suppose $L$ is a $\mathbb Z_m$-graded simple Lie algebra.
Let $s \geqslant 2$ and $0\leqslant k_i \leqslant m-1$ for $1\leqslant i \leqslant s$,
$k_s > 0$. Let $a_i^{(k_i)} \in L_i^{(k_i)}$ for $1\leqslant i \leqslant s$.
Then 
\begin{equation}\label{EqLieTaftSimpleSGradVMany}
v[a_1^{(k_1)}, [a_2^{(k_2)}, \ldots, [a_{s-1}^{(k_{s-1})}, a_s^{(k_s)}]\ldots]
 = \frac{\zeta^{\sum_{i=1}^s k_i}-1}{\zeta^{k_s}-1}[a_1^{(k_1)}, [a_2^{(k_2)}, \ldots, [a_{s-1}^{(k_{s-1})}, v a_s^{(k_s)}]\ldots].\end{equation}
\end{lemma}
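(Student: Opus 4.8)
The plan is to induct on the number $s$ of factors, proving~(\ref{EqLieTaftSimpleSGradVMany}) for every nested commutator $[a_1^{(k_1)},[a_2^{(k_2)},\ldots,[a_{s-1}^{(k_{s-1})},a_s^{(k_s)}]\ldots]$ of \emph{arbitrary} homogeneous elements $a_i^{(k_i)}\in L^{(k_i)}$ with $k_s\not\equiv 0\pmod m$. This generality is essential: in the inductive step I shall regard sub-brackets such as $[a_1^{(k_1)},a_2^{(k_2)}]$ as single homogeneous elements and feed them back into the hypothesis. The base case $s=2$ is precisely~(\ref{EqLieTaftSimpleSGradMultProd}): since $\zeta^{k_2}\neq 1$, dividing that identity by $\zeta^{k_2}-1$ gives the claim. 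For $s\geqslant 3$ set $W_1:=[a_2^{(k_2)},\ldots,[a_{s-1}^{(k_{s-1})},a_s^{(k_s)}]\ldots]\in L^{(\ell)}$, where $\ell:=\sum_{i=2}^s k_i$, and write $W_1'$ for the commutator obtained from $W_1$ by replacing its innermost factor $a_s^{(k_s)}$ by $va_s^{(k_s)}$; the expression under consideration is $[a_1^{(k_1)},W_1]$ and its target value is $\frac{\zeta^{K}-1}{\zeta^{k_s}-1}[a_1^{(k_1)},W_1']$ with $K:=\sum_{i=1}^s k_i$.

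If $\ell\not\equiv 0\pmod m$ the step is immediate. Applying~(\ref{EqLieTaftSimpleSGradMultProd}) to $[a_1^{(k_1)},W_1]$ and dividing by $\zeta^{\ell}-1\neq 0$ gives $v[a_1^{(k_1)},W_1]=\frac{\zeta^{k_1+\ell}-1}{\zeta^{\ell}-1}[a_1^{(k_1)},vW_1]$, while the inductive hypothesis for $W_1$ gives $vW_1=\frac{\zeta^{\ell}-1}{\zeta^{k_s}-1}W_1'$; multiplying, the factors $\zeta^{\ell}-1$ cancel and leave $\frac{\zeta^{K}-1}{\zeta^{k_s}-1}[a_1^{(k_1)},W_1']$, as required.

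The case $\ell\equiv 0\pmod m$ is the real obstacle, since then $\zeta^{\ell}=1$ and~(\ref{EqLieTaftSimpleSGradMultProd}) degenerates to $0=0$, so $v$ can no longer be pulled out of the outer bracket. I treat it in three subcases according to $k_1$ and $k_2$ modulo $m$. If $k_1\equiv 0$, then $K\equiv 0$, so $[a_1^{(k_1)},W_1]\in L^{(0)}$ and $v$ kills it by the last assertion of Lemma~\ref{LemmaLieTaftSimpleSGradVGrad} (available because $L$ is $\mathbb Z_m$-graded-simple), while the target vanishes too because $\zeta^{k_1}-1=0$. If $k_1\not\equiv 0$ and $k_2\not\equiv 0$, I put $W_2:=[a_3^{(k_3)},\ldots,a_s^{(k_s)}]\in L^{(\ell_2)}$ with $\ell_2\equiv m-k_2\pmod m$ and regard it as a single homogeneous element of $L^{(m-k_2)}$; then $[a_1^{(k_1)},W_1]=[a_1^{(k_1)},[a_2^{(k_2)},W_2]]$ is exactly of the form handled by Lemma~\ref{LemmaLieTaftSimpleSGradTrois}, giving $v[a_1^{(k_1)},[a_2^{(k_2)},W_2]]=\frac{\zeta^{k_1}-1}{\zeta^{\ell_2}-1}[a_1^{(k_1)},[a_2^{(k_2)},vW_2]]$, after which substituting $vW_2=\frac{\zeta^{\ell_2}-1}{\zeta^{k_s}-1}W_2'$ from the inductive hypothesis (the factors $\zeta^{\ell_2}-1$ cancel since $\ell_2\not\equiv 0$) yields $\frac{\zeta^{k_1}-1}{\zeta^{k_s}-1}[a_1^{(k_1)},[a_2^{(k_2)},W_2']]$, which is the target because $\zeta^{K}=\zeta^{k_1}$. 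Finally, if $k_1\not\equiv 0$ but $k_2\equiv 0$, then $a_2^{(k_2)}\in L^{(0)}$, hence $va_2^{(k_2)}=0$; here I expand by the Jacobi identity, $[a_1^{(k_1)},[a_2^{(k_2)},W_2]]=[[a_1^{(k_1)},a_2^{(k_2)}],W_2]+[a_2^{(k_2)},[a_1^{(k_1)},W_2]]$, apply the inductive hypothesis for length $s-1$ to each summand (treating $[a_1^{(k_1)},a_2^{(k_2)}]\in L^{(k_1)}$ as one factor in the first, and using $va_2^{(k_2)}=0$ with the skew-derivation rule in the second), observe that both contributions acquire the same coefficient $\frac{\zeta^{k_1}-1}{\zeta^{k_s}-1}$, and recombine them by the Jacobi identity to recover $\frac{\zeta^{k_1}-1}{\zeta^{k_s}-1}[a_1^{(k_1)},W_1']$.

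I expect the case $\ell\equiv 0$ to be the crux: identity~(\ref{EqLieTaftSimpleSGradMultProd}) gives no information there, and the entire job of transporting $v$ from the outer slot down to the innermost factor across a degree-zero commutator is done by Lemma~\ref{LemmaLieTaftSimpleSGradTrois} together with the vanishing $vL^{(0)}=0$. The point needing care is that the trichotomy in $(k_1\bmod m,\ k_2\bmod m)$ is exhaustive and that it is arranged so that exactly when Lemma~\ref{LemmaLieTaftSimpleSGradTrois} does not apply (i.e.\ $k_1\equiv 0$ or $k_2\equiv 0$) a degree-zero factor is present which either makes the whole commutator lie in $L^{(0)}$ or forces $va_2^{(k_2)}=0$, so that the Jacobi reduction to length $s-1$ closes up.
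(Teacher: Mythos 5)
Your proof is correct and follows essentially the same route as the paper's: induction on $s$, with~(\ref{EqLieTaftSimpleSGradMultProd}) handling the case $\sum_{i=2}^s k_i \not\equiv 0 \pmod m$, Lemma~\ref{LemmaLieTaftSimpleSGradTrois} handling the degenerate case with $k_1,k_2\not\equiv 0$, and the Jacobi identity together with $vL^{(0)}=0$ disposing of degree-zero factors. The only difference is organizational: the paper splits on whether some $k_i=0$ ($i<s$) and pushes such a factor all the way inward by repeated Jacobi expansions, whereas you split on $(k_1 \bmod m,\ k_2\bmod m)$ and need only one Jacobi expansion at the top level --- a slightly leaner but equivalent arrangement.
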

\begin{proof}
We prove the assertion by induction on $s$. The base $s=2$ is a consequence of~(\ref{EqLieTaftSimpleSGradMultProd}).
Suppose $s > 2$.

 If $m \nmid \sum_{i=2}^s k_i$,
then by~(\ref{EqLieTaftSimpleSGradMultProd}) and the induction assumption,  \begin{equation*}\begin{split}v[a_1^{(k_1)}, [a_2^{(k_2)}, \ldots, [a_{s-1}^{(k_{s-1})}, a_s^{(k_s)}]\ldots]
 =  \frac{\zeta^{\sum_{i=1}^s k_i}-1}{\zeta^{\sum_{i=2}^s k_i}-1}[a_1^{(k_1)}, v[a_2^{(k_2)}, \ldots, [a_{s-1}^{(k_{s-1})}, a_s^{(k_s)}]\ldots]
  \\ =\frac{\zeta^{\sum_{i=1}^s k_i}-1}{\zeta^{k_s}-1}[a_1^{(k_1)}, [a_2^{(k_2)}, \ldots, [a_{s-1}^{(k_{s-1})}, v a_s^{(k_s)}]\ldots]\end{split}\end{equation*}
and the lemma follows.

Suppose $k_i = 0$ for some $1\leqslant i < s$.
Then by the Jacobi identity (here the symbol $\widehat{a_i^{(k_i)}}$ means that the element $a_i^{(k_i)}$ is omitted), \begin{equation*}\begin{split}a:=v[a_1^{(k_1)}, [a_2^{(k_2)}, \ldots, [a_{s-1}^{(k_{s-1})}, a_s^{(k_s)}]\ldots]  \\
 = \sum_{j=i+1}^{s-1} v[a_1^{(k_1)}, [a_2^{(k_2)}, \ldots, [\widehat{a_i^{(k_i)}}, \ldots, [a_{j-1}^{(k_{j-1})}, [[a_i^{(k_i)}, a_j^{(k_j)}],
 [a_{j+1}^{(k_{j+1})}, \ldots [a_{s-1}^{(k_{s-1})}, a_s^{(k_s)}]\ldots]
 \\+ v[a_1^{(k_1)}, [a_2^{(k_2)}, \ldots, [\widehat{a_i^{(k_i)}}, \ldots, [a_{s-1}^{(k_{s-1})}, [a_i^{(k_i)}, a_s^{(k_s)}]]\ldots].\end{split}\end{equation*}

Since $k_i=0$, each $[a_i^{(k_i)}, a_j^{(k_j)}]$ is again of degree $k_j$ and we treat it as a single element.
Applying the induction assumption for $s-1$, we get
\begin{equation*}\begin{split}a 
 =  \frac{\zeta^{\sum_{i=1}^s k_i}-1}{\zeta^{k_s}-1} \\ \cdot\left(\sum_{j=i+1}^{s-1} [a_1^{(k_1)}, [a_2^{(k_2)}, \ldots, [\widehat{a_i^{(k_i)}}, \ldots, [a_{j-1}^{(k_{j-1})}, [[a_i^{(k_i)}, a_j^{(k_j)}],
 [a_{j+1}^{(k_{j+1})}, \ldots [a_{s-1}^{(k_{s-1})}, v a_s^{(k_s)}]\ldots]
 \right. \\ \left.+ [a_1^{(k_1)}, [a_2^{(k_2)}, \ldots, [\widehat{a_i^{(k_i)}}, \ldots, [a_{s-1}^{(k_{s-1})}, v[a_i^{(k_i)}, a_s^{(k_s)}]]\ldots]\right).\end{split}\end{equation*}
 By~(\ref{EqLieTaftSimpleSGradMultProd}) and the Jacobi identity, we get
  \begin{equation*}\begin{split}a 
 = \frac{\zeta^{\sum_{i=1}^s k_i}-1}{\zeta^{k_s}-1} \\ \cdot\left(\sum_{j=i+1}^{s-1} [a_1^{(k_1)}, [a_2^{(k_2)}, \ldots, [\widehat{a_i^{(k_i)}}, \ldots, [a_{j-1}^{(k_{j-1})}, [[a_i^{(k_i)}, a_j^{(k_j)}],
 [a_{j+1}^{(k_{j+1})}, \ldots [a_{s-1}^{(k_{s-1})}, v a_s^{(k_s)}]\ldots]
  \right. \\ \left.+[a_1^{(k_1)}, [a_2^{(k_2)}, \ldots, [\widehat{a_i^{(k_i)}}, \ldots, [a_{s-1}^{(k_{s-1})}, [a_i^{(k_i)}, v a_s^{(k_s)}]]\ldots]\right)
  \\= \frac{\zeta^{\sum_{i=1}^s k_i}-1}{\zeta^{k_s}-1}[a_1^{(k_1)}, [a_2^{(k_2)}, \ldots, [a_{s-1}^{(k_{s-1})}, v a_s^{(k_s)}]\ldots]\end{split}\end{equation*}
 and the lemma again follows.

The only case we have not considered yet is when all $1\leqslant k_i \leqslant m-1$
and $m \mid \sum_{i=2}^s k_i$. But then $m \mid \left(k_2 + \sum_{i=3}^s k_i\right)$
and we can apply Lemma~\ref{LemmaLieTaftSimpleSGradTrois}:
\begin{equation*}\begin{split} v[a_1^{(k_1)}, [a_2^{(k_2)}, [a_3^{(k_3)} \ldots, [a_{s-1}^{(k_{s-1})}, a_s^{(k_s)}]\ldots] \\ = \frac{\zeta^{\sum_{i=1}^s k_i}-1}{\zeta^{\sum_{i=3}^s k_i}-1}[a_1^{(k_1)}, [a_2^{(k_2)}, v\bigl[a_3^{(k_3)} \ldots, [a_{s-1}^{(k_{s-1})}, a_s^{(k_s)}]\ldots\bigr]]] 
 \end{split}\end{equation*}
and the lemma follows by the induction assumption for $s-2$.\end{proof}

\begin{lemma}\label{LemmaLieTaftSimpleSGradVKer}
Let $L$ be an $H_{m^2}(\zeta)$-module Lie algebra over a field $F$ of characteristic $\ch F \nmid m$, $\ch F \ne 2$. Suppose $L$ is a $\mathbb Z_m$-graded simple Lie algebra, $vL\ne 0$.
Then $\ker v = L^{(0)}$.
\end{lemma}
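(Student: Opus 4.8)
The plan is to prove the two inclusions $L^{(0)} \subseteq \ker v$ and $\ker v \subseteq L^{(0)}$ separately. The first is immediate: since $vL \ne 0$ we have $L \ne L^{(0)}$, so the last assertion of Lemma~\ref{LemmaLieTaftSimpleSGradVGrad} yields $vL^{(0)}=0$, that is, $L^{(0)} \subseteq \ker v$. The substance of the lemma is therefore the reverse inclusion.

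For that, I would first record that $v$ lowers the $\mathbb Z_m$-degree by one. Indeed, from the relation $vc=\zeta cv$ in $H_{m^2}(\zeta)$ one gets $c(va^{(i)})=\zeta^{-1}v(ca^{(i)})=\zeta^{i-1}va^{(i)}$ for $a^{(i)}\in L^{(i)}$, so $vL^{(i)} \subseteq L^{(i-1)}$. Consequently, for $x=\sum_i x^{(i)}$ the components $vx^{(i)}$ lie in pairwise distinct graded subspaces, so $vx=0$ if and only if $vx^{(i)}=0$ for every $i$; in other words, $\ker v$ is a graded subspace. Thus it suffices to prove that $\ker v \cap L^{(k)} = 0$ for each $k \ne 0$.

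Suppose, aiming at a contradiction, that there is a nonzero homogeneous $a^{(k)} \in \ker v$ with $1 \le k \le m-1$. Since $L$ is $\mathbb Z_m$-graded-simple and $a^{(k)}$ is homogeneous, the (graded) ideal generated by $a^{(k)}$ is all of $L$. Writing this ideal in right-normed form, $L$ is spanned by $a^{(k)}$ itself together with the homogeneous commutators $[b_1, [b_2, \ldots, [b_{s-1}, a^{(k)}]\ldots]$ with $s\ge 2$, where each $b_j$ may be taken homogeneous because $L$ is graded. To every such commutator I would apply Lemma~\ref{LemmaLieTaftSimpleSGradVMany} with $a_s^{(k_s)} := a^{(k)}$, so that $k_s=k>0$: formula~(\ref{EqLieTaftSimpleSGradVMany}) then expresses $v$ of the commutator as a scalar multiple of the same commutator with $a^{(k)}$ replaced by $va^{(k)}=0$, and hence it vanishes. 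As $v$ also kills $a^{(k)}$ itself, it annihilates a spanning set of $L$, giving $vL=0$ and contradicting the hypothesis $vL \ne 0$. Therefore $\ker v \cap L^{(k)}=0$ for all $k\ne 0$, so $\ker v \subseteq L^{(0)}$, and together with the first inclusion this gives $\ker v = L^{(0)}$.

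The only genuinely delicate point is the bookkeeping of the bracketing. Lemma~\ref{LemmaLieTaftSimpleSGradVMany} requires the distinguished element $a^{(k)}$ to occupy the innermost position of a right-normed commutator and to have nonzero degree. This is exactly why I generate the ideal in right-normed form (with $a^{(k)}$ innermost) rather than left-normed, and why the hypothesis $k\ne 0$ enters decisively: it guarantees $k_s>0$, which is precisely the condition under which the lemma applies.
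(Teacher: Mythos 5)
Your proof is correct and follows essentially the same route as the paper: $vL^{(0)}=0$ from Lemma~\ref{LemmaLieTaftSimpleSGradVGrad}, gradedness of $\ker v$ from $vc=\zeta cv$, and then graded-simplicity plus Lemma~\ref{LemmaLieTaftSimpleSGradVMany} applied to right-normed commutators with a homogeneous $a^{(k)}\in\ker v$, $k\ne 0$, in the innermost slot to force $vL=0$. The paper's own argument is exactly this, only stated more tersely; your extra remarks on the bookkeeping of the bracketing and the role of $k\ne 0$ are accurate.
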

\begin{proof}
By Lemma~\ref{LemmaLieTaftSimpleSGradVGrad}, $vL^{(0)}=0$. Since $vc=\zeta cv$, $\ker v$ is a $\mathbb Z_m$-graded subspace. Suppose $\ker v \supsetneqq L^{(0)}$. Then there exists an element $a^{(k)} \subseteq L^{(k)}$,
$1\leqslant k \leqslant m-1$, $a^{(k)}\ne 0$, $va^{(k)}=0$.
Since $L$ is $\mathbb Z_m$-graded simple and $a^{(k)}$ is homogeneous, we have $L=\sum_{n \geqslant 0}[\underbrace{L, [L,\ldots [L,}_{n} F a^{(k)}]\ldots]$. By Lemma~\ref{LemmaLieTaftSimpleSGradVMany},
we get $vL=0$.
\end{proof}

\begin{lemma}\label{LemmaLieTaftSimpleSGradVIm}
Let $L$ be an $H_{m^2}(\zeta)$-module Lie algebra over a field $F$ of characteristic $\ch F \nmid m$. Suppose $L$ is a $\mathbb Z_m$-graded simple Lie algebra, $vL\ne 0$.
Then  $v L^{(k)}=L^{(k-1)}$ for all $1\leqslant k \leqslant m-1$.
\end{lemma}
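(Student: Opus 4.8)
The plan is to prove each equality $vL^{(k)}=L^{(k-1)}$ for $1\le k\le m-1$ by establishing the two inclusions separately. The inclusion $vL^{(k)}\subseteq L^{(k-1)}$ is immediate: from $vc=\zeta cv$ we get, for $a\in L^{(k)}$, that $c(va)=\zeta^{-1}v(ca)=\zeta^{k-1}va$, so $va\in L^{(k-1)}$. In particular $\im v=vL$ is a $\mathbb Z_m$-graded subspace whose component in degree $j$ is $vL^{(j+1)}$. Since $vL\ne 0$ and, by Lemma~\ref{LemmaLieTaftSimpleSGradVGrad}, $vL^{(0)}=0$, I can fix once and for all a \emph{nonzero homogeneous} element $w_0=va_0$ with $a_0\in L^{(\ell_0)}$ for some $1\le \ell_0\le m-1$; then $w_0\in L^{(\ell_0-1)}$. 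The entire content of the lemma is thus the reverse inclusion $L^{(k-1)}\subseteq vL^{(k)}$.

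For the reverse inclusion I would fix $k$ and exploit graded-simplicity together with Lemma~\ref{LemmaLieTaftSimpleSGradVMany}. Because $L$ is $\mathbb Z_m$-graded-simple and $w_0$ is a nonzero homogeneous element, $w_0$ generates $L$ as a graded ideal, so $L^{(k-1)}$ is spanned by $w_0$ itself (in case $\ell_0=k$) together with long commutators $X=[a_1^{(k_1)},[\ldots,[a_{s-1}^{(k_{s-1})},w_0]\ldots]]$ of total degree $k-1$, where the $a_i$ may be taken homogeneous. The decisive feature is that the innermost entry $w_0=va_0$ lies in $\im v$: replacing it by $a_0^{(\ell_0)}$ produces the commutator $Y=[a_1^{(k_1)},[\ldots,[a_{s-1}^{(k_{s-1})},a_0^{(\ell_0)}]\ldots]]$, which is homogeneous of degree $k$, so $Y\in L^{(k)}$. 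Since the innermost degree $\ell_0$ is positive, Lemma~\ref{LemmaLieTaftSimpleSGradVMany} applies and yields $vY=\frac{\zeta^{k}-1}{\zeta^{\ell_0}-1}\,X$.

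It then remains only to observe that the scalar is invertible. As $1\le k\le m-1$ and $1\le \ell_0\le m-1$, both $\zeta^{k}-1$ and $\zeta^{\ell_0}-1$ are nonzero, whence $X=\frac{\zeta^{\ell_0}-1}{\zeta^{k}-1}\,vY\in vL^{(k)}$; and $w_0=va_0\in vL^{(k)}$ when $\ell_0=k$. Thus every spanning element of $L^{(k-1)}$ lies in $vL^{(k)}$, which gives $L^{(k-1)}\subseteq vL^{(k)}$ and hence the desired equality for all $1\le k\le m-1$.

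The main obstacle is locating the correct route to the nontrivial inclusion. A naive dimension count, using that $\ker v=L^{(0)}$ (Lemma~\ref{LemmaLieTaftSimpleSGradVKer}) makes $v$ injective on each $L^{(k)}$ with $k\ge 1$, yields only the chain $\dim L^{(0)}\ge\dim L^{(1)}\ge\cdots\ge\dim L^{(m-1)}$ and cannot by itself close the cycle back to force equalities. The key insight is instead to generate $L^{(k-1)}$ using a homogeneous element of $\im v$ as the innermost commutator entry, so that Lemma~\ref{LemmaLieTaftSimpleSGradVMany} can pull $v$ outside and exhibit each generator as a genuine $v$-image; the nonvanishing of the resulting quantum-binomial scalar is guaranteed precisely because all degrees in play lie strictly between $0$ and $m$.
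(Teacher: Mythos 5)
Your argument is correct and reaches the conclusion by a genuinely different route from the paper's. The paper proceeds globally: it shows that $I = vL + \sum_{k=1}^{m-1}[L^{(k)}, vL^{(m-k)}]$ is a $\mathbb Z_m$-graded ideal (a case analysis using~(\ref{EqLieTaftSimpleSGradMultLR}), (\ref{EqLieTaftSimpleSGradMultProd}), the Jacobi identity, and Lemma~\ref{LemmaLieTaftSimpleSGradTrois} only when $m=2k$), concludes $I=L$ by graded-simplicity, and then observes that the second summand lies in $L^{(m-1)}$ while $vL\subseteq\bigoplus_{k=0}^{m-2}L^{(k)}$, which forces $vL=\bigoplus_{k=0}^{m-2}L^{(k)}$ and hence the componentwise equalities. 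You instead generate $L$ as an ideal from a single nonzero homogeneous element $w_0=va_0$ of $\im v$ and use Lemma~\ref{LemmaLieTaftSimpleSGradVMany} to exhibit each homogeneous spanning element of $L^{(k-1)}$ as a $v$-image from $L^{(k)}$; this is precisely the technique the paper itself uses to prove Lemma~\ref{LemmaLieTaftSimpleSGradVKer}, so your proof unifies the two lemmas, and your degree bookkeeping and the nonvanishing of $\zeta^k-1$ for $1\leqslant k\leqslant m-1$ are in order. The one caveat is hypotheses: Lemma~\ref{LemmaLieTaftSimpleSGradVMany} is stated for finite dimensional $H_{m^2}(\zeta)$-simple algebras over a field with $\ch F\ne 2$, whereas the lemma you are proving assumes only $\ch F\nmid m$ and graded-simplicity. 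Finite-dimensionality and $H$-simplicity are not actually used in the proof of Lemma~\ref{LemmaLieTaftSimpleSGradVMany}, but $\ch F\ne 2$ genuinely is (via Lemma~\ref{LemmaLieTaftSimpleSGradTrois}), and your reduction may invoke that case even when $m$ is odd; the paper's own proof needs Lemma~\ref{LemmaLieTaftSimpleSGradTrois} only when $m$ is even, where $\ch F\ne 2$ already follows from $\ch F\nmid m$. So as written your proof silently excludes the corner case $\ch F=2$ with $m$ odd. This is immaterial for the paper's characteristic-$0$ applications, but to get the lemma in its stated generality you should either handle that case separately or add the hypothesis explicitly.
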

\begin{proof}
First, we claim that $L = vL + \sum_{k\ne 1}^{m-1} [L^{(k)}, vL^{(m-k)}]$.
Note that $$I=vL + \sum_{k= 1}^{m-1} [L^{(k)}, vL^{(m-k)}]$$ is a $\mathbb Z_m$-graded subspace.
We claim that $I$ is an ideal too. 

By~(\ref{EqLieTaftSimpleSGradMultProd}),
$[L^{(\ell)}, vL^{(k)}] \subseteq vL$ for all $0\leqslant k,\ell < m$
such that $m \nmid (k+\ell)$.
Hence $[L, vL] \subseteq I$.

Now we show that $\left[L, \sum_{k= 1}^{m-1} [L^{(k)}, vL^{(m-k)}]\right] \subseteq I$.

First, the Jacobi identity implies \begin{equation}\label{EqLieTaftSimpleSGradJacobi}[a^{(\ell)}, [b^{(k)}, vu^{(m-k)}]]
= [[a^{(\ell)}, b^{(k)}], vu^{(m-k)}]+ [ b^{(k)}, [a^{(\ell)},vu^{(m-k)}]]\end{equation}
for all $0\leqslant k,\ell < m$ and $a^{(\ell)} \in L^{(\ell)}$, $b^{(k)} \in L^{(k)}$, $u^{(m-k)}\in L^{(m-k)}$.

If $\ell = 0$, then by~(\ref{EqLieTaftSimpleSGradMultProd})
and~(\ref{EqLieTaftSimpleSGradJacobi}),
$$[a^{(\ell)}, [b^{(k)}, vu^{(m-k)}]]
= [[a^{(\ell)}, b^{(k)}], vu^{(m-k)}]+ [ b^{(k)}, v[a^{(\ell)},u^{(m-k)}]] \in [L^{(k)}, vL^{(m-k)}].$$

If $\ell\ne 0$ and $k \ne \ell$, then by~(\ref{EqLieTaftSimpleSGradMultProd})
and~(\ref{EqLieTaftSimpleSGradJacobi}),
\begin{equation*}[a^{(\ell)}, [b^{(k)}, vu^{(m-k)}]]= \frac{\zeta^{m-k}-1}{\zeta^{\ell}-1}v\left([[a^{(\ell)}, b^{(k)}], u^{(m-k)}]+[ b^{(k)}, [a^{(\ell)},u^{(m-k)}]]\right) \in vL.\end{equation*}

Suppose $\ell \ne 0$ and $k=\ell$.
Below we show that $[L^{(k)}, [L^{(k)},vL^{(m-k)}]] \subseteq vL$.

If $m \ne 2k$, then $m-k \ne k$. By~(\ref{EqLieTaftSimpleSGradMultLR}), (\ref{EqLieTaftSimpleSGradMultProd}) and the Jacobi identity,
\begin{equation*}\begin{split}[L^{(k)}, [L^{(k)},vL^{(m-k)}]] \subseteq [L^{(k)}, [L^{(m-k)}, vL^{(k)}]]
\\ \subseteq [[L^{(k)}, L^{(m-k)}], vL^{(k)}]
+ [L^{(m-k)}, [L^{(k)}, vL^{(k)}]] \subseteq vL. \end{split}\end{equation*}

If $m=2k$, then $\ch F \ne 2$ and the inclusion $[L^{(k)}, [L^{(k)},vL^{(k)}]] \subseteq vL$ is a consequence of Lemma~\ref{LemmaLieTaftSimpleSGradTrois}.
 
 Thus $I$ is indeed a $\mathbb Z_m$-graded ideal and $L = vL + \sum_{k\ne 0} [L^{(k)}, vL^{(m-k)}]$ since
$L$ is $\mathbb Z_m$-graded simple.

Since $vc = \zeta cv$, we have $vL^{(k)} \subseteq L^{(k-1)}$, and $\sum_{k\ne 0} [L^{(k)}, vL^{(m-k)}]\subseteq L^{(m-1)}$. Thus $\bigoplus_{k=0}^{m-2} L^{(k)} \subseteq vL$.
Since by Lemma~\ref{LemmaLieTaftSimpleSGradVGrad} we have $vL^{(0)}=0$,
this implies
$vL \cap L^{(m-1)}=0$ and $vL = \bigoplus_{k=0}^{m-2} L^{(k)}$. In particular, $v L^{(k)}=L^{(k-1)}$ for all $1\leqslant k \leqslant m-1$.
\end{proof}

\begin{lemma}\label{LemmaLieTaftSimpleSGradPhiMult}
Let $L$ be an $H_{m^2}(\zeta)$-module Lie algebra over a field $F$ of characteristic $\ch F \nmid m$, $\ch F \ne 2$. Suppose $L$ is a $\mathbb Z_m$-graded simple Lie algebra, $vL\ne 0$.
Define the maps $\varphi \colon L^{(k)} \to L^{(k+1)}$ (we denote them by the same letter)
 by $\varphi(va)=a$ for $a \in L^{(k+1)}$, $0\leqslant k \leqslant m-2$.
Define $\lbrace a, b \rbrace := (m-1)!_\zeta [\varphi(a), \varphi^{m-1}(b)]$.
Then 
  \begin{equation}\label{EqLieTaftSimpleSGradPhiMult}
  [\varphi^k(a),\varphi^\ell(b)]:=\left\lbrace
\begin{array}{rrr}
  \binom{k+\ell}{k}_\zeta\ \varphi^{k+\ell}[a,b]  & \text{if} & k+\ell < m,\\
  \frac{(k+\ell-m)!_\zeta}{k!_\zeta \ell!_\zeta}\ \varphi^{k+\ell-m}\lbrace a,b \rbrace 
  & \text{if} & k+\ell \geqslant m
  \end{array}\right.
  \end{equation}
  for all $a, b\in L^{(0)}$ and $0 \leqslant k,\ell < m$.
\end{lemma}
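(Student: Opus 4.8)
The plan is to prove~(\ref{EqLieTaftSimpleSGradPhiMult}) by a layered induction on $k+\ell$, treating separately the three regimes $k+\ell<m$, $k+\ell=m$, and $k+\ell>m$; the boundary $k+\ell=m$ will be the only genuinely delicate case. First I would record the facts that make $\varphi$ usable. By Lemma~\ref{LemmaLieTaftSimpleSGradVKer} we have $\ker v=L^{(0)}$, so $v$ is injective on each $L^{(j)}$ with $1\leqslant j\leqslant m-1$; by Lemma~\ref{LemmaLieTaftSimpleSGradVIm} it is also surjective onto $L^{(j-1)}$, hence $v\colon L^{(j)}\mathrel{\widetilde{\to}}L^{(j-1)}$ is a bijection and $\varphi$ is exactly its inverse. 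Thus $v\varphi=\id$ on $L^{(j)}$ for $0\leqslant j\leqslant m-2$ and $\varphi v=\id$ on $L^{(j)}$ for $1\leqslant j\leqslant m-1$, so that $\varphi^t(a)\in L^{(t)}$ and $v\varphi^t(a)=\varphi^{t-1}(a)$ for $a\in L^{(0)}$. I would also note that $j_\zeta\ne 0$ for $1\leqslant j\leqslant m-1$ (since $\zeta^j\ne 1$), whereas $m_\zeta=0$, so all factorials $k!_\zeta,\ell!_\zeta$ occurring in~(\ref{EqLieTaftSimpleSGradPhiMult}) are invertible.

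For the regime $k+\ell<m$ I would argue by induction on $k+\ell$, the case $k=\ell=0$ being trivial. If $\ell\geqslant 1$, relation~(\ref{EqLieTaftSimpleSGradMultProd}) applied to $\varphi^k(a)\in L^{(k)}$ and $\varphi^\ell(b)\in L^{(\ell)}$ gives $v[\varphi^k(a),\varphi^\ell(b)]=\frac{\zeta^{k+\ell}-1}{\zeta^\ell-1}[\varphi^k(a),\varphi^{\ell-1}(b)]$, where $\zeta^\ell\ne 1$ and $\zeta^{k+\ell}\ne 1$ since $1\leqslant\ell\leqslant k+\ell\leqslant m-1$. As the bracket lies in $L^{(k+\ell)}$ with $k+\ell\geqslant 1$, applying $\varphi$ (using $\varphi v=\id$) together with the inductive value of $[\varphi^k(a),\varphi^{\ell-1}(b)]$ reduces the claim to the quantum identity $\frac{\zeta^{k+\ell}-1}{\zeta^\ell-1}\binom{k+\ell-1}{k}_\zeta=\binom{k+\ell}{k}_\zeta$, which follows from $n_\zeta=\frac{\zeta^n-1}{\zeta-1}$ exactly as in~(\ref{EquationMainQuantumBinomial}). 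The remaining case $\ell=0$, $k\geqslant 1$ follows from anticommutativity, since $[\varphi^k(a),b]=-[\varphi^0(b),\varphi^k(a)]$ and $\binom{k}{k}_\zeta=1$.

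The boundary $k+\ell=m$ is the main obstacle: here $[\varphi^k(a),\varphi^{m-k}(b)]\in L^{(0)}=\ker v$, so~(\ref{EqLieTaftSimpleSGradMultProd}) degenerates to $0=0$ and carries no information. Instead I would move along the line of constant total degree $m$ using the other relation~(\ref{EqLieTaftSimpleSGradMultLR}). Applying it to $\varphi^{k+1}(a)\in L^{(k+1)}$ and $\varphi^{m-k}(b)\in L^{(m-k)}$ yields $[\varphi^{k+1}(a),\varphi^{m-k-1}(b)]=\frac{\zeta^{m-k}-1}{\zeta^{k+1}-1}[\varphi^k(a),\varphi^{m-k}(b)]$ for $1\leqslant k\leqslant m-2$, a recursion in $k$ whose base $k=1$ is precisely the definition $\{a,b\}=(m-1)!_\zeta[\varphi(a),\varphi^{m-1}(b)]$. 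Solving it reduces the claim to $\frac{\zeta^{m-k}-1}{\zeta^{k+1}-1}=\frac{(m-k)_\zeta}{(k+1)_\zeta}$, which is immediate from $n_\zeta=\frac{\zeta^n-1}{\zeta-1}$.

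Finally, for $k+\ell>m$ I would again use~(\ref{EqLieTaftSimpleSGradMultProd}) to lower $\ell$: since $\zeta^{k+\ell}=\zeta^{k+\ell-m}\ne 1$ and the bracket now lies in $L^{(k+\ell-m)}$ with $1\leqslant k+\ell-m\leqslant m-2$, applying $\varphi$ (again $\varphi v=\id$) and the inductive hypothesis — whose total degree $k+\ell-1$ decreases down to the already established base $k+\ell=m$ — reduces the claim to $\frac{\zeta^{k+\ell}-1}{\zeta^\ell-1}=\frac{(k+\ell-m)_\zeta}{\ell_\zeta}$, which is clear from $\zeta^m=1$. Throughout, the only bookkeeping subtlety is tracking which of $v\varphi$ and $\varphi v$ equals the identity on which graded component, and ensuring $\ell\geqslant 1$ (equivalently $k\leqslant m-1$) so that the structural relations may legitimately be divided by $\zeta^\ell-1$.
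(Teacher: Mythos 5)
Your proof is correct and follows essentially the same route as the paper: the same case split ($k+\ell<m$, $=m$, $>m$), the same inductive descent via $\varphi v=\id$ on the positive graded components, and the same use of~(\ref{EqLieTaftSimpleSGradMultLR}) to walk along the diagonal $k+\ell=m$ starting from the definition of $\lbrace\cdot,\cdot\rbrace$. The only cosmetic difference is that you invoke the one-term consequence~(\ref{EqLieTaftSimpleSGradMultProd}) where the paper expands $v[x,y]=[cx,vy]+[vx,y]$ into two terms and uses the Pascal-type identity~(\ref{EquationMainQuantumBinomial}); these are equivalent reformulations of the same computation.
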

\begin{proof}
By Lemmas~\ref{LemmaLieTaftSimpleSGradVKer} and~\ref{LemmaLieTaftSimpleSGradVIm},
$\varphi$ is well-defined. Moreover,
 $v\varphi(a)=a$ for all $a\in L^{(i)}$, $0\leqslant i \leqslant m-2$.

If $k=\ell=0$, then~(\ref{EqLieTaftSimpleSGradPhiMult}) is trivial.
By Lemma~\ref{LemmaLieTaftSimpleSGradVGrad}, $v[\varphi^k(a),b]=[\varphi^{k-1}(a),b]$
for $1\leqslant k \leqslant m-1$ and $a,b\in L^{(0)}$.
Hence $[\varphi^k(a),b]=\varphi^k[a,b]$
and we get~(\ref{EqLieTaftSimpleSGradPhiMult})
in the case when at least one of $k,\ell$ is zero.

The case of arbitrary $k,\ell > 0$, $k+\ell < m$, is done by induction
using~(\ref{EquationMainQuantumBinomial}):
\begin{equation*}\begin{split}
[\varphi^k(a),\varphi^\ell(b)]= \varphi \left(v[\varphi^k(a),\varphi^\ell(b)]\right)
=\varphi([c\varphi^k(a), \varphi^{\ell-1}(b)]+[\varphi^{k-1}(a),\varphi^\ell(b)])\\
=\varphi\left([\zeta^k\varphi^k(a), \varphi^{\ell-1}(b)]+[\varphi^{k-1}(a),\varphi^\ell(b)]\right)\\
=\varphi\left(\zeta^k\binom{k+\ell-1}{k}_\zeta\ \varphi^{k+\ell-1}[a,b]+\binom{k+\ell-1}{k-1}_\zeta\ \varphi^{k+\ell-1}[a,b] \right)\\
=\left(\zeta^k \binom{k+\ell-1}{k}_\zeta + \binom{k+\ell-1}{k-1}_\zeta\right) \varphi^{k+\ell}[a,b]
\\ =\binom{k+\ell}{k}_\zeta\ \varphi^{k+\ell}[a,b].
\end{split}\end{equation*}

Suppose $k+\ell = m$.
We prove the assertion by induction on $k$.
 If $k=1$ and $\ell = m-1$, then (\ref{EqLieTaftSimpleSGradPhiMult}) follows from the definition
of $\lbrace , \rbrace$.
If $k>1$,
then $\ell < m-1$ and by~(\ref{EqLieTaftSimpleSGradMultLR})
and the induction assumption for $k-1$, we have
\begin{equation*}\begin{split}[\varphi^k(a),\varphi^\ell(b)]=
[\varphi^k(a),v\varphi^{\ell+1}(b)]
=\frac{\zeta^{\ell+1}-1}{\zeta^k-1}
[v\varphi^k(a), \varphi^{\ell+1}(b)]=\frac{\zeta^{\ell+1}-1}{\zeta^k-1}[\varphi^{k-1}(a), \varphi^{\ell+1}(b)]
\\=\frac{\zeta^{\ell+1}-1}{(\zeta^k-1)(k-1)!_\zeta (\ell+1)!_\zeta } \lbrace a,b\rbrace =
\frac{(\ell+1)_\zeta}{k_\zeta(k-1)!_\zeta (\ell+1)!_\zeta } \lbrace a,b\rbrace=\frac{1}{k!_\zeta \ell!_\zeta } \lbrace a,b\rbrace.\end{split}\end{equation*}

If $k+\ell > m$, then we again use induction on $(k+\ell)$:
\begin{equation*}\begin{split}
[\varphi^k(a),\varphi^\ell(b)]= \varphi \left(v[\varphi^k(a),\varphi^\ell(b)]\right)
=\varphi([c\varphi^k(a), \varphi^{\ell-1}(b)]+[\varphi^{k-1}(a),\varphi^\ell(b)])\\
=\varphi\left([\zeta^k\varphi^k(a), \varphi^{\ell-1}(b)]+[\varphi^{k-1}(a),\varphi^\ell(b)]\right)\\
=\varphi\left(\zeta^k\frac{(k+\ell-m-1)!_\zeta}{k!_\zeta
(\ell-1)!_\zeta}\ \varphi^{k+\ell-m-1}\lbrace a,b\rbrace+\frac{(k+\ell-m-1)!_\zeta}{(k-1)!_\zeta
\ell!_\zeta}\ \varphi^{k+\ell-m-1}\lbrace a,b\rbrace \right)\\=
\left(\zeta^k \frac{(k+\ell-m-1)!_\zeta}{k!_\zeta
(\ell-1)!_\zeta} + \frac{(k+\ell-m-1)!_\zeta}{(k-1)!_\zeta
\ell!_\zeta}\right) \varphi^{k+\ell-m}\lbrace a,b\rbrace
\\ =\frac{(k+\ell)_\zeta (k+\ell-m-1)!_\zeta}{k!_\zeta
\ell!_\zeta}\ \varphi^{k+\ell-m}\lbrace a,b\rbrace=\frac{(k+\ell-m)!_\zeta}{k!_\zeta
\ell!_\zeta}\ \varphi^{k+\ell-m}\lbrace a,b\rbrace
\end{split}\end{equation*}
since $(k+\ell)_\zeta=(k+\ell-m)_\zeta$ for $m<k+\ell < 2m$.
\end{proof}

\begin{lemma}\label{LemmaLieTaftSimpleSGradIsoToLBGamma}
Let $L$ be a finite dimensional $H_{m^2}(\zeta)$-module Lie algebra over an algebraically closed field $F$ of characteristic $0$. Suppose $L$ is a $\mathbb Z_m$-graded simple Lie algebra, $vL\ne 0$.
Then $L^{(0)}$ is a simple Lie algebra and there exist $\gamma \in F$, $\gamma \ne 0$,
such that $\lbrace a, b \rbrace = \gamma [a,b]$ for all $a,b \in L^{(0)}$.
In other words, $L \cong L(L^{(0)}, \gamma)$ as an $H_{m^2}(\zeta)$-module Lie algebra.
\end{lemma}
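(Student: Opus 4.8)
The plan is to work entirely inside $V:=L^{(0)}$, equipped with the restriction of the commutator $[\,\cdot\,,\cdot\,]$ and with the operation $\lbrace\,\cdot\,,\cdot\,\rbrace$ from Lemma~\ref{LemmaLieTaftSimpleSGradPhiMult}. First I would record two elementary properties of $\lbrace\,,\rbrace$. Antisymmetry is immediate from~(\ref{EqLieTaftSimpleSGradPhiMult}): the pairs $(k,\ell)=(1,m-1)$ and $(m-1,1)$ give $[\varphi(a),\varphi^{m-1}(b)]=\frac{1}{(m-1)!_\zeta}\lbrace a,b\rbrace$ and $[\varphi^{m-1}(b),\varphi(a)]=\frac{1}{(m-1)!_\zeta}\lbrace b,a\rbrace$, whence $\lbrace a,b\rbrace=-\lbrace b,a\rbrace$. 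Invariance under the adjoint action of $L^{(0)}$, that is $[c,\lbrace a,b\rbrace]=\lbrace[c,a],b\rbrace+\lbrace a,[c,b]\rbrace$ for $c\in L^{(0)}$, follows by applying the Jacobi identity in $L$ to $c,\varphi(a),\varphi^{m-1}(b)$ and using $[c,\varphi^k(x)]=\varphi^k[c,x]$. Thus $\lbrace\,,\rbrace\colon\Lambda^2V\to V$ is a homomorphism of $V$-modules for the adjoint action.

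Next I would extract from the Jacobi identity in $L$, applied to homogeneous elements of top degree, the two structural identities that control $\lbrace\,,\rbrace$. Applying it to $\varphi^{m-1}(a),\varphi^{m-1}(b),\varphi^{m-1}(c)$ and cancelling the common nonzero scalar yields $\sum_{\mathrm{cyc}}\lbrace\lbrace a,b\rbrace,c\rbrace=0$, so $(V,\lbrace\,,\rbrace)$ is a Lie algebra (for $m\geqslant 3$; the case $m=2$ I would handle separately). Applying it to $\varphi(a),\varphi^{m-1}(b),\varphi^{m-1}(c)$ and using $(m-1)_\zeta=-\zeta^{m-1}$ gives, after cyclic summation and division by $2-\zeta\neq 0$, the relation $\sum_{\mathrm{cyc}}[a,\lbrace b,c\rbrace]=0$. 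Together with invariance and the Jacobi identity for $\lbrace\,,\rbrace$, this is exactly the compatibility condition making $[a,b]_t:=[a,b]+t\lbrace a,b\rbrace$ a Lie bracket for every $t\in F$, and a short computation then shows $\lbrace\,,\rbrace$ is $[\,,\,]_t$-invariant for all $t$. Finally I would translate $\mathbb Z_m$-graded-simplicity: by~(\ref{EqLieTaftSimpleSGradPhiMult}) a subspace $A\subseteq V$ gives a graded ideal $\bigoplus_{k}\varphi^k(A)$ of $L$ exactly when $[A,V]\subseteq A$ and $\lbrace A,V\rbrace\subseteq A$, so $L$ admits no proper nonzero subspace that is an ideal for \emph{both} brackets, equivalently none common to all $[\,,\,]_t$.

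Granting that some member $(V,[\,,\,]_{t_0})$, $t_0\in F$, is semisimple, the proof closes cleanly. Decompose $(V,[\,,\,]_{t_0})=\bigoplus_i W_i$ into simple ideals. Invariance of $\lbrace\,,\rbrace$ forces $\lbrace W_i,W_j\rbrace=0$ for $i\neq j$ and $\lbrace W_i,W_i\rbrace\subseteq W_i$ (a standard consequence of equivariance, since $W_k$ acts trivially on $\lbrace W_i,W_j\rbrace$ for $k\notin\lbrace i,j\rbrace$ and the simple factors are perfect), while Schur's lemma gives $\lbrace\,,\rbrace|_{W_i}=\gamma_i[\,,\,]_{t_0}|_{W_i}$. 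Hence each $W_i$ is an ideal for $[\,,\,]_{t_0}$ and for $\lbrace\,,\rbrace$, so it is common to all $[\,,\,]_t$; bi-simplicity leaves a single factor, whence $(V,[\,,\,]_{t_0})$ is simple and $\lbrace\,,\rbrace=\gamma_1[\,,\,]_{t_0}$. Then $[\,,\,]=(1-t_0\gamma_1)[\,,\,]_{t_0}$, and the scalar is nonzero, since $[\,,\,]=0$ would make $L^{(0)}$ a proper nonzero graded ideal; therefore $L^{(0)}$ is simple. Putting $\gamma:=\gamma_1/(1-t_0\gamma_1)$ gives $\lbrace a,b\rbrace=\gamma[a,b]$, with $\gamma\neq 0$ because $\lbrace\,,\rbrace=0$ would force $[\varphi^k(a),\varphi^\ell(b)]=0$ for all $k+\ell\geqslant m$, making $L^{(m-1)}$ a proper nonzero graded ideal. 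The isomorphism $L\cong L(L^{(0)},\gamma)$ is then immediate by comparing~(\ref{EqLieTaftSimpleSGradPhiMult}) with~(\ref{EqMultTaftSimpleLiePresent}).

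The main obstacle is the hypothesis used above: that at least one bracket of the pencil is semisimple; I expect this to be the only genuinely hard point. The plan is to argue by contradiction. If every $[\,,\,]_t$ were degenerate, then the Gram determinant of the Killing form $\kappa_t$ of $[\,,\,]_t$, a polynomial in $t$, would vanish identically, so over $K=F(t)$ the Lie algebra $(V\otimes_F K,[\,,\,]_t)$ would carry a nonzero solvable radical; since $\lbrace\,,\rbrace$ is $[\,,\,]_t$-invariant over $K$ as well, the goal is to descend a canonical piece of this radical to an $F$-rational subspace that is an ideal for all $[\,,\,]_t$, contradicting bi-simplicity. The delicate part is precisely this descent from $F(t)$ to $F$, because the solvable radical over $F(t)$ need not be defined over $F$; one must isolate an $F$-rational bi-invariant subspace, for instance by intersecting the radicals of the coefficient forms of $\kappa_t$, or by establishing directly that $(V,\lbrace\,,\rbrace)$ is semisimple, which already makes the leading coefficient of the Gram determinant nonzero and hence forces some finite member to be semisimple.
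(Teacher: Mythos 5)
The decisive step of your argument --- that some member of the pencil $[\,\cdot\,,\cdot\,]_t=[\,\cdot\,,\cdot\,]+t\lbrace \cdot,\cdot\rbrace$ on $L^{(0)}$ is semisimple --- is exactly the point you leave unproved, and it is not a routine loose end: everything after ``Granting that\dots'' depends on it, and your closing sketch (Gram determinant of the Killing form over $F(t)$, descent of a radical from $F(t)$ to $F$) is a plan whose delicate part you yourself flag as unresolved. So as written the proof has a genuine gap. The paper closes this hole by a much more direct route that you already have all the ingredients for: your two identities (ad-invariance of $\lbrace\cdot,\cdot\rbrace$, which is~(\ref{EqLieTaftSimpleSGradIsoToLBGamma1}), and your cyclic relation, which is~(\ref{EqLieTaftSimpleSGradIsoToLBGamma2})) combine, using antisymmetry of $\lbrace\cdot,\cdot\rbrace$ and $\ch F\ne 2$, to give $\lbrace[a,b],u\rbrace=[\lbrace a,b\rbrace,u]$ and hence that each map $\lbrace\cdot,u\rbrace$ is a \emph{derivation} of $(L^{(0)},[\,\cdot\,,\cdot\,])$. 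Since the solvable radical of $L^{(0)}$ is invariant under all derivations, it satisfies your own criterion ``ideal for both brackets,'' so by graded-simplicity of $L$ it is $0$ (or all of $L^{(0)}$, which is excluded), and a one-line argument with derivations then rules out a decomposition into two simple ideals. After that, inner-ness of derivations of the simple algebra $L^{(0)}$ gives $\lbrace a,b\rbrace=[a,\psi(b)]$ with $\psi$ an $L^{(0)}$-module endomorphism, and Schur's lemma applied to $\psi$ yields the scalar $\gamma$. No pencil, no generic fibre, no descent.

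Two further points would need attention even if the semisimplicity step were repaired. First, your Jacobi identity for $\lbrace\cdot,\cdot\rbrace$ (needed for the $t^2$-coefficient of the pencil computation) is obtained from the triple $\varphi^{m-1}(a),\varphi^{m-1}(b),\varphi^{m-1}(c)$ only when $m\geqslant 3$; for $m=2$ that triple bracket lands back in~(\ref{EqLieTaftSimpleSGradIsoToLBGamma2}) and gives nothing new, so the case you defer is not a routine one for your method --- whereas the paper's argument is uniform in $m$. Second, ``Schur's lemma gives $\lbrace\cdot,\cdot\rbrace|_{W_i}=\gamma_i[\,\cdot\,,\cdot\,]_{t_0}|_{W_i}$'' is really the statement that the adjoint module occurs with multiplicity one in $\Lambda^2 W_i$ for a simple Lie algebra; this is true but is more than Schur's lemma and should be either cited or replaced by the inner-derivation argument above, which needs only $\End_{W_i}(W_i)=F$.
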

\begin{proof}
Note that by the Jacobi identity we have $$[\varphi^{m-1}(a), \varphi(b), u]
+ [\varphi(b), u, \varphi^{m-1}(a)]
+ [u, \varphi^{m-1}(a), \varphi(b)]=0$$
for all $a,b,u \in L^{(0)}$.
Together with Lemma~\ref{LemmaLieTaftSimpleSGradPhiMult}
this implies 
\begin{equation}\label{EqLieTaftSimpleSGradIsoToLBGamma1}
[\lbrace a, b \rbrace, u]+\lbrace [b,u], a \rbrace
+ \lbrace [u,a], b \rbrace = 0.
\end{equation}

Again, by the Jacobi identity we have $$[\varphi^{m-1}(a), \varphi^{m-1}(b), \varphi(u)]
+ [\varphi^{m-1}(b), \varphi(u), \varphi^{m-1}(a)]
+ [\varphi(u), \varphi^{m-1}(a), \varphi^{m-1}(b)]=0$$
for all $a,b,u \in L^{(0)}$.
Together with Lemma~\ref{LemmaLieTaftSimpleSGradPhiMult}
this implies 
\begin{equation}\label{EqLieTaftSimpleSGradIsoToLBGamma2}
[\lbrace a, b \rbrace, u]+[\lbrace b,u \rbrace, a]
+ [\lbrace u, a \rbrace, b] = 0.
\end{equation}

Now~(\ref{EqLieTaftSimpleSGradIsoToLBGamma1})
implies
$$[\lbrace b, u \rbrace, a]+\lbrace [u,a], b \rbrace
+ \lbrace [a,b], u \rbrace = 0, $$
$$[\lbrace u, a \rbrace, b]+\lbrace [a,b], u \rbrace
+ \lbrace [b,u], a \rbrace = 0.$$
Summing this up with~(\ref{EqLieTaftSimpleSGradIsoToLBGamma1})
and using~(\ref{EqLieTaftSimpleSGradIsoToLBGamma2}) and $\ch F \ne 2$,
we get
$$\lbrace [a,b], u \rbrace +\lbrace [b,u], a \rbrace
+ \lbrace [u,a], b \rbrace =0$$ for all $a,b,u \in L^{(0)}$.
Together with~(\ref{EqLieTaftSimpleSGradIsoToLBGamma1})
this implies \begin{equation}\label{EqLieTaftSimpleSGradIsoToLBGamma3}\lbrace [a,b], u \rbrace = [\lbrace a,b\rbrace, u].\end{equation}
By Lemma~\ref{LemmaLieTaftSimpleSGradPhiMult},
$\lbrace a, b \rbrace = -\lbrace b,a \rbrace$.
Together with~(\ref{EqLieTaftSimpleSGradIsoToLBGamma2}) and~(\ref{EqLieTaftSimpleSGradIsoToLBGamma3})
this implies 
$$\lbrace [a,b], u \rbrace = [\lbrace a, u\rbrace, b] + [a, \lbrace b, u \rbrace] $$ for all $a,b,u \in L^{(0)}$.
In other words, $\lbrace \cdot, u \rbrace$ is a derivation for
all $u \in L^{(0)}$.

Now we show that $L^{(0)}$ is a simple Lie algebra.
Suppose first $I \ne 0$ is an ideal of $L^{(0)}$
such that $\lbrace I, u \rbrace \subseteq I$ for all $u \in L^{(0)}$.
Then by Lemma~\ref{LemmaLieTaftSimpleSGradPhiMult},
$\bigoplus_{k=0}^{m-1} \varphi^k(I)$ is a nonzero $\mathbb Z_m$-graded ideal of $L$.
Hence $L = \bigoplus_{k=0}^{m-1} \varphi^k(I)$ and $I=L^{(0)}$.
By~\cite[Chapter~III, Section~6, Theorem~7]{JacobsonLie}, the solvable
radical of~$L^{(0)}$  is invariant under all derivations.
Hence $L^{(0)}$ is semisimple.
If $L^{(0)}$ is non-simple, then 
$L^{(0)}=I_1 \oplus I_2$ for some nonzero ideals $I_1$ and $I_2$.
Let $\delta$ be a derivation of $L^{(0)}$.
Then $\delta(I_i)=\delta[I_i, I_i]\subseteq[\delta(I_i), I_i]+[I_i, \delta(I_i)]\subseteq I_i$.
In other words, $I_i$, $i=1,2$, are invariant under all derivations and $L^{(0)}=I_1 = I_2$,
i.e. we get a contradiction. Therefore, $L^{(0)}$ is a simple Lie algebra.

Since $L^{(0)}$ is simple, all derivations of $L^{(0)}$ are inner.
Hence there exists an $F$-linear map $\psi \colon L^{(0)} \to L^{(0)}$
such that $\lbrace a,b \rbrace = [a, \psi(b)]$
for all $a,b\in L^{(0)}$. Now~(\ref{EqLieTaftSimpleSGradIsoToLBGamma3})
implies $$[\psi[a,b], u] = \lbrace [a,b], u \rbrace
= [\lbrace a,b\rbrace, u] = [[a,\psi(b)], u]$$
for all $a,b,u \in L^{(0)}.$
Since $L^{(0)}$ has zero center, we have $\psi[a,b] = [a,\psi(b)]$
for all $a,b \in L^{(0)}$. In other words,
$\psi \colon L^{(0)} \to L^{(0)}$ is a homomorphism
of $L^{(0)}$-modules. Since $L^{(0)}$ is an irreducible
 $L^{(0)}$-module, $\psi$ is a scalar map and $\lbrace a,b \rbrace = \gamma[a, b]$
 for some $\gamma \in F$. By Lemma~\ref{LemmaLieTaftSimpleSGradPhiMult},
 $L \cong L(L^{(0)}, \gamma)$ as an $H_{m^2}(\zeta)$-module Lie algebra.
 Since $L$ is $\mathbb Z_m$-graded simple and, therefore, semisimple, we have $\gamma \ne 0$.
\end{proof}

In Theorem~\ref{TheoremTaftSimpleSimpleLieClassify} below we show that
each finite dimensional $H_{m^2}(\zeta)$-module Lie algebra
simple in the ordinary sense is just a $\mathbb Z_m$-graded Lie algebra
with the trivial $v$-action.

\begin{theorem}\label{TheoremTaftSimpleSimpleLieClassify}
Let $L$ be a finite dimensional $H_{m^2}(\zeta)$-module Lie algebra over an algebraically closed field $F$ of characteristic $0$. Suppose $L$ is simple in the ordinary sense. Then $vL = 0$.
\end{theorem}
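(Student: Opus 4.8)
The plan is to reduce the statement to the classification of $\mathbb Z_m$-graded-simple Lie algebras with nonzero $v$-action already established in the preceding lemmas. The first observation is that if $L$ is simple in the ordinary sense, then it has no proper nonzero ideals whatsoever, and in particular no proper nonzero $\mathbb Z_m$-graded ideals; moreover $[L,L]$ is a nonzero ideal, so $[L,L]=L\neq 0$. Hence $L$ is a $\mathbb Z_m$-graded-simple Lie algebra with respect to the grading induced by the $c$-action, and the hypotheses of Lemma~\ref{LemmaLieTaftSimpleSGradIsoToLBGamma} (and the lemmas it relies on) are available for $L$.

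I would then argue by contradiction, assuming $vL\neq 0$. Under this assumption Lemma~\ref{LemmaLieTaftSimpleSGradIsoToLBGamma} applies and yields that $L^{(0)}$ is a simple Lie algebra and that $L\cong L(L^{(0)},\gamma)$ as $H_{m^2}(\zeta)$-module algebras for some $\gamma\neq 0$. Since $F$ is algebraically closed and $1-\zeta\neq 0$ (because $m\geqslant 2$), the nonzero scalar $\frac{1}{\gamma(1-\zeta)^m}$ admits an $m$th root $\alpha\neq 0$ in $F$, so that $\gamma=\frac{1}{\alpha^m(1-\zeta)^m}$. Applying Theorem~\ref{TheoremTaftSimpleLieEquivDef} I obtain $L\cong L(L^{(0)},\gamma)\cong L_\alpha(L^{(0)})$; in particular this is an isomorphism of ordinary Lie algebras.

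The contradiction comes from the explicit shape of $L_\alpha(L^{(0)})$: by its very definition it is the direct sum of $m$ ideals, each isomorphic to the simple algebra $L^{(0)}$, and since $m\geqslant 2$ any single copy is a proper nonzero ideal. Thus $L_\alpha(L^{(0)})$, and therefore $L$, fails to be simple, contradicting the hypothesis. Consequently $vL=0$. I do not expect a genuine obstacle here, since the substantive work has been carried out in the earlier lemmas; the only points needing care are the (immediate) passage from ordinary simplicity to $\mathbb Z_m$-graded-simplicity and the extraction of the root $\alpha$, which is exactly where algebraic closedness of $F$ enters. The conceptual heart is that the complete description of $\mathbb Z_m$-graded-simple Lie algebras with $vL\neq 0$ forces a decomposable, hence non-simple, algebra, so a genuinely simple $L$ can only carry the trivial skew-derivation.
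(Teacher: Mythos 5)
Your proof is correct and follows essentially the same route as the paper: assume $vL\neq 0$, invoke Lemma~\ref{LemmaLieTaftSimpleSGradIsoToLBGamma} to get $L\cong L(L^{(0)},\gamma)$ with $\gamma\neq 0$, pass to $L_\alpha(L^{(0)})$ via Theorem~\ref{TheoremTaftSimpleLieEquivDef}, and derive a contradiction from the decomposition into $m$ ideals. You also usefully make explicit two points the paper leaves tacit, namely that ordinary simplicity gives $\mathbb Z_m$-graded-simplicity and that algebraic closedness is what furnishes the $m$th root $\alpha$.
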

\begin{proof} 
Suppose $vL\ne 0$.
Then by Lemma~\ref{LemmaLieTaftSimpleSGradIsoToLBGamma} we have the isomorphism
$L \cong L(L^{(0)}, \gamma)$ of $H_{m^2}(\zeta)$-module Lie algebras
for some $\gamma \ne 0$.
By Theorem~\ref{TheoremTaftSimpleLieEquivDef},  
$L(L^{(0)}, \gamma) \cong L_\alpha(L^{(0)})$
where $\alpha =  \frac{(1-\zeta)^{-1}}{\sqrt[m]{\gamma}}$. However $L_\alpha(L^{(0)})$
is non-simple as an ordinary Lie algebra and we get a contradiction.
Thus $vL=0$. \end{proof}

\section{Non-semisimple $H_{m^2}(\zeta)$-simple Lie algebras}\label{SectionClassTaftSLieNSS}

In this section we show that all non-semisimple $H_{m^2}(\zeta)$-simple Lie algebras
are isomorphic to Lie algebras from Theorem~\ref{TheoremTaftSimpleLiePresent} with $\gamma = 0$.

\begin{theorem}\label{TheoremTaftSimpleNonSemiSimpleLieClassify}
Suppose $L$ is a finite dimensional $H_{m^2}(\zeta)$-simple Lie algebra over an algebraically closed field $F$ of characteristic $0$ and the solvable radical of $L$ is nonzero.
Then $L$ is isomorphic as an $H_{m^2}(\zeta)$-module Lie algebra to the Lie algebra $L(B, 0)$ for some finite dimensional simple Lie algebra $B$.
\end{theorem}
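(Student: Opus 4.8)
The plan is to show that, although $L$ is merely $H_{m^2}(\zeta)$-simple rather than $\mathbb Z_m$-graded-simple, the whole apparatus of Section~\ref{SectionClassTaftSLieSimple} still goes through once every appeal to $\mathbb Z_m$-graded-simplicity is replaced by an appeal to $H_{m^2}(\zeta)$-simplicity. The recurring point is that each graded ideal manufactured in the proofs of Lemmas~\ref{LemmaLieTaftSimpleSGradVKer}--\ref{LemmaLieTaftSimpleSGradIsoToLBGamma} is automatically $v$-invariant, so that $H_{m^2}(\zeta)$-simplicity already forces it to equal $L$. First I would record that $vL\ne 0$: if $vL=0$, then every $c$-invariant (equivalently, every $\mathbb Z_m$-graded) ideal is $H_{m^2}(\zeta)$-invariant, so $L$ is $\mathbb Z_m$-graded-simple; but its graded solvable radical is then a graded ideal, hence $0$ or $L$, and since $L$ is not solvable (otherwise $[L,L]$ would be a proper nonzero $H_{m^2}(\zeta)$-invariant ideal) the radical is $0$, contradicting the hypothesis.

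The technical heart is to re-establish, for $H_{m^2}(\zeta)$-simple $L$ with $vL\ne 0$, the three facts $vL^{(0)}=0$, $\ker v=L^{(0)}$, and $vL^{(k)}=L^{(k-1)}$ for $1\le k\le m-1$. For $vL^{(0)}=0$ --- the single place where the original Lemma~\ref{LemmaLieTaftSimpleSGradVGrad} genuinely used generation by one homogeneous element --- I would argue by contradiction. If $va^{(0)}\ne 0$ for some $a^{(0)}\in L^{(0)}$, then by~(\ref{EqLieTaftSimpleSGradMultLR}) the element $va^{(0)}\in L^{(m-1)}$ annihilates every $L^{(\ell)}$ with $\ell\ne 0$, so the homogeneous subspace $Z:=\{x\in L^{(m-1)}\mid [x,L^{(\ell)}]=0\text{ for all }\ell\ne 0\}$ is nonzero. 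A short Jacobi computation gives $[L^{(0)},Z]\subseteq Z$, so $[L,Z]\subseteq Z$ and $Z$ is a (necessarily abelian) graded ideal; hence $\sum_{j\ge 0}v^jZ$ is $c$- and $v$-invariant and an ideal, i.e. $H_{m^2}(\zeta)$-invariant, and therefore equals $L$. Comparing graded components ($v^jZ\subseteq L^{(m-1-j)}$) yields $L^{(0)}=v^{m-1}Z$, whence $vL^{(0)}=v^mZ=0$, a contradiction.

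Granted $vL^{(0)}=0$, the equality $\ker v=L^{(0)}$ follows from the identity of Lemma~\ref{LemmaLieTaftSimpleSGradVMany} (whose proof, in fact, does not use graded-simplicity): a nonzero homogeneous element of $\ker v$ of nonzero degree would generate a graded ideal on which $v$ vanishes, hence an $H_{m^2}(\zeta)$-invariant ideal equal to $L$, forcing $vL=0$. The surjectivity $vL^{(k)}=L^{(k-1)}$ follows by checking that the ideal $I=vL+\sum_{k=1}^{m-1}[L^{(k)},vL^{(m-k)}]$ of Lemma~\ref{LemmaLieTaftSimpleSGradVIm} is $v$-invariant --- since $v(vL)\subseteq vL$ and $v[L^{(k)},vL^{(m-k)}]\subseteq[L,vL]\subseteq I$ --- hence $H_{m^2}(\zeta)$-invariant and equal to $L$, after which the degree bookkeeping of that lemma applies unchanged.

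With these three facts the map $\varphi$ and the multiplication formula~(\ref{EqLieTaftSimpleSGradPhiMult}) of Lemma~\ref{LemmaLieTaftSimpleSGradPhiMult} are available verbatim, their proofs being formal consequences of the three facts. I would then rerun the proof of Lemma~\ref{LemmaLieTaftSimpleSGradIsoToLBGamma}: its only use of $\mathbb Z_m$-graded-simplicity is the assertion that a graded ideal of the form $\bigoplus_{k=0}^{m-1}\varphi^k(I)$ equals $L$, but such an ideal is automatically $v$-invariant ($v\varphi^k(I)=\varphi^{k-1}(I)$ for $k\ge 1$ and $vI\subseteq vL^{(0)}=0$), so $H_{m^2}(\zeta)$-simplicity suffices. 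This gives that $B:=L^{(0)}$ is a simple Lie algebra and $L\cong L(B,\gamma)$ for some $\gamma\in F$. Finally, if $\gamma\ne 0$ then Theorem~\ref{TheoremTaftSimpleLieEquivDef} gives $L\cong L_\alpha(B)$, which is semisimple, contradicting the nonzero radical; hence $\gamma=0$ and $L\cong L(B,0)$, as required. The main obstacle is precisely the proof that $vL^{(0)}=0$, where graded-simplicity was previously essential and must be replaced by the annihilator ideal $Z$ together with $H_{m^2}(\zeta)$-simplicity.
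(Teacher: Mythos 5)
Your argument is correct in its main thrust, but it takes a genuinely different route from the paper's. The paper does not attempt to salvage the graded-simple machinery of Lemmas~\ref{LemmaLieTaftSimpleSGradVKer}--\ref{LemmaLieTaftSimpleSGradIsoToLBGamma} at all: it works instead from the bottom of the nilpotent radical, choosing a minimal graded ideal $\tilde N\subseteq N^{\ell-1}$ and showing $L=\bigoplus_{i=0}^{t}v^i\tilde N$ with the quotients $N_k/N_{k-1}$ irreducible and $c$-isomorphic (Lemma~\ref{LemmaTaftSimpleNonSemiSimpleClassifyLieSumDirect}), then invokes the invariant Levi decomposition from~\cite{Taft} to produce a graded semisimple complement $B$, deducing $R=N$, $B=L^{(0)}=\ker v$ simple with trivial grading and $L^{(i)}=v^{t-i}\tilde N$ (Lemma~\ref{LemmaTaftSimpleNonSemiSimpleLieClassifyUnity}); the quantum-binomial formula and the identity $(t+1)_\zeta\,\varphi^t[a,b]=0$ then force $t+1=m$. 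Your route --- replacing each appeal to graded-simplicity by the observation that the graded ideals occurring in Section~\ref{SectionClassTaftSLieSimple} are automatically $v$-invariant, plus the new annihilator argument for $vL^{(0)}=0$ --- checks out: $Z=\lbrace x\in L^{(m-1)}\mid [x,L^{(\ell)}]=0\ \forall\ell\ne 0\rbrace$ is indeed a nonzero graded ideal, $\sum_j v^jZ$ is $H_{m^2}(\zeta)$-invariant and hence all of $L$, and the degree count forces $L^{(0)}=v^{m-1}Z$, so $vL^{(0)}=0$. Your approach gives a uniform treatment of the graded-simple and non-graded-simple cases and avoids both Taft's theorem and the module-theoretic analysis, at the cost of re-auditing every proof in Section~\ref{SectionClassTaftSLieSimple}.

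One step deserves more care than ``rerun Lemma~\ref{LemmaLieTaftSimpleSGradIsoToLBGamma} verbatim'': the assertion that $L^{(0)}$ is semisimple. The derivation-invariance argument only shows that the solvable radical $R_0$ of $L^{(0)}$ is $0$ or all of $L^{(0)}$; in the paper's setting $L$ is $\mathbb Z_m$-graded-simple, hence semisimple, which helps exclude the second alternative, but in your setting $L$ is by hypothesis \emph{not} semisimple, so you must rule out $L^{(0)}$ solvable directly. This is doable with your tools: $[L^{(0)},L^{(0)}]$ is itself a derivation-invariant ideal, hence $0$ or $L^{(0)}$, and $L^{(0)}$ abelian is impossible because then~(\ref{EqLieTaftSimpleSGradPhiMult}) would give $[L,L]\cap L^{(m-1)}=0$, while $[L,L]$ is a nonzero $H_{m^2}(\zeta)$-invariant ideal (hence equal to $L$) and $L^{(m-1)}\ne 0$ by $vL^{(k)}=L^{(k-1)}$. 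With that supplement, and with the final step ($\gamma\ne 0$ would give $L\cong L_\alpha(L^{(0)})$ semisimple via Theorem~\ref{TheoremTaftSimpleLieEquivDef}, contradicting $R\ne 0$), your proof is complete.
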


In order to prove Theorem~\ref{TheoremTaftSimpleNonSemiSimpleLieClassify}, we need several auxiliary lemmas.

Let $M_1,M_2$ be two $\mathbb Z_m$-graded modules over a $\mathbb Z_m$-graded Lie algebra $L$. We say that a $F$-linear bijection $\varphi \colon M_1 \mathrel{\widetilde\to} M_2$ is a \textit{$c$-isomorphism}
of $M_1$ and $M_2$ if there exists $r\in\mathbb Z$ such that $c\varphi(b) = \zeta^{-r} \varphi(cb)$, $\varphi(ab)=a\varphi(b)$ for all $b\in M_1$, $a\in L$.

Recall that for any finite-dimensional Lie algebra $L$ over a field of characteristic $0$ we have
$[L,R]\subseteq N$ (see e.g.~\cite[Proposition 2.1.7]{GotoGrosshans}) where $R$, $N$ are, respectively, the solvable and the nilpotent radical. Hence if $N=0$, then we have $R \subseteq Z(L)\subseteq N=0$
where $Z(L)$ is the center of $L$. 
Recall also that if $L$ is an $H_{m^2}(\zeta)$-module Lie algebra, then $R$ and $N$ are $\mathbb Z_m$-graded ideals since $R$ and $N$ are invariant under all automorphisms of $L$ and, in particular, under the $c$-action. 

\begin{lemma}\label{LemmaTaftSimpleNonSemiSimpleClassifyLieSumDirect}
Suppose $L$ is a finite dimensional $H_{m^2}(\zeta)$-simple Lie algebra over a field $F$ and its solvable radical $R \ne 0$.
Let $N$ be the nilpotent radical of $L$,
 $N^\ell = 0$, $N^{\ell-1} \ne 0$. Choose a minimal $\mathbb Z_m$-graded $L$-ideal $\tilde N \subseteq N^{\ell-1}$.
Then for any $k$ the subspace $N_k := \sum_{i=0}^{i=k} v^i \tilde N$ is a $\mathbb Z_m$-graded ideal of $L$ and $L = \bigoplus_{i=0}^t v^i \tilde N$ (direct sum of $\mathbb Z_m$-graded subspaces) for some $1 \leqslant t \leqslant m-1$.
Moreover, $N_k/N_{k-1}$, $0 \leqslant k \leqslant t$, are irreducible $\mathbb Z_m$-graded $L$-modules $c$-isomorphic to each other. (Here $N_{-1} := 0$.)
\end{lemma}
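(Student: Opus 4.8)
The plan is to analyze the structure of the nilpotent radical $N$ under the skew-derivation $v$, using the fact that $v$ interacts with the grading via $vL^{(k)} \subseteq L^{(k-1)}$ and $v^m = 0$. The starting object is a minimal $\mathbb Z_m$-graded $L$-ideal $\tilde N$ inside the last nonzero power $N^{\ell-1}$; since $N^{\ell-1}$ is central in $N$ (indeed $[N, N^{\ell-1}] \subseteq N^\ell = 0$) and is a $\mathbb Z_m$-graded ideal, such a minimal graded $L$-submodule $\tilde N$ exists and is an irreducible $\mathbb Z_m$-graded $L$-module. The first task is to show each $v^i \tilde N$ is again an $L$-submodule and that $N_k = \sum_{i=0}^k v^i \tilde N$ is a $\mathbb Z_m$-graded ideal. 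For the module property I would use the compatibility $v[a,b] = [ca, vb] + [va, b]$: applying this with $b \in \tilde N$ and iterating shows that $[a, v^i\tilde N] \subseteq \sum_{j\leqslant i} v^j \tilde N$ for all $a \in L$, so each $N_k$ is $L$-invariant; it is $v$-invariant by construction and $c$-invariant because $v$ shifts the grading, hence $N_k$ is a $\mathbb Z_m$-graded $H_{m^2}(\zeta)$-invariant ideal.

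Next I would produce the direct sum decomposition. Define $t$ to be the largest index with $v^t \tilde N \ne 0$; since $v^m = 0$ we have $t \leqslant m-1$, and $t \geqslant 1$ will follow because $\tilde N \subseteq R$ while $H_{m^2}(\zeta)$-simplicity forces the $H_{m^2}(\zeta)$-invariant ideal $N_t$ generated by $\tilde N$ under $v$ to be all of $L$; in particular $vL \ne 0$, and one checks $t \geqslant 1$. The key claim is $L = N_t = \bigoplus_{i=0}^t v^i \tilde N$. Surjectivity onto $L$ is $H_{m^2}(\zeta)$-simplicity applied to the nonzero invariant ideal $N_t$. Directness of the sum is the heart of the matter: I would argue that if the sum were not direct, one could find a proper nonzero graded $v$-invariant $L$-subideal, contradicting either minimality of $\tilde N$ or $H_{m^2}(\zeta)$-simplicity. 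Concretely, the quotients $N_k/N_{k-1}$ receive a surjective $L$-module map induced by $v$ from $N_{k-1}/N_{k-2}$, and since $\tilde N = N_0$ is irreducible, each quotient is either zero or irreducible and $c$-isomorphic to $\tilde N$ up to a grading shift by $r = 1$ (the $v$-shift); comparing dimensions along the chain $N_0 \subseteq N_1 \subseteq \cdots \subseteq N_t$ forces each step $v \colon N_{k-1}/N_{k-2} \to N_k/N_{k-1}$ to be an isomorphism for $k \leqslant t$, which is exactly the statement that the sum is direct and that the successive quotients are $c$-isomorphic irreducibles.

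The main obstacle I anticipate is establishing that the induced maps $v \colon N_{k-1}/N_{k-2} \to N_k/N_{k-1}$ are genuine $L$-module homomorphisms and then that they are isomorphisms rather than merely surjections. For the homomorphism property the relation $v[a,b] = [ca, vb] + [va, b]$ shows $v(a \cdot x) = (ca)\cdot(vx) + (va)\cdot x$, and the extra term $(va)\cdot x$ must be controlled: here one uses that $v$ lowers $N$-nilpotency degree, so for $x \in N^{\ell-1}$ and its images, the correction terms land in lower layers $N_{k-2}$ and vanish modulo $N_{k-2}$, making $\bar v$ a twisted $L$-module map that is a $c$-isomorphism with shift $r=1$. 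For injectivity on quotients I would invoke a minimal-counterexample or dimension-counting argument: were some $\bar v$ not injective, its kernel would generate a proper nonzero $H_{m^2}(\zeta)$-invariant graded ideal strictly between $N_{k-1}$ and $N_k$, contradicting $H_{m^2}(\zeta)$-simplicity when lifted appropriately, or would contradict the irreducibility of $\tilde N$ via the Schur-type rigidity already exploited in Theorem~\ref{TheoremTaftSimpleSemiSimpleLieClassify}. Once each $\bar v$ is an isomorphism, directness of $\bigoplus_{i=0}^t v^i\tilde N$ and the $c$-isomorphism of all $N_k/N_{k-1}$ to $\tilde N$ (with uniform grading shift) follow immediately, completing the proof.
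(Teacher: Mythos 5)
Your proposal follows essentially the same route as the paper: show each $N_k$ is a $\mathbb Z_m$-graded ideal via the relation $v[a,b]=[ca,vb]+[va,b]$, conclude $L=N_{m-1}$ from $H_{m^2}(\zeta)$-simplicity, pass to the maps induced by $v$ on successive quotients $N_k/N_{k-1}$, propagate irreducibility from $\tilde N$, and count dimensions to get directness. Two details need repair. First, the reason the correction term $[va,x]$ dies in $N_k/N_{k-1}$ is not that ``$v$ lowers $N$-nilpotency degree'' (and it lands in $N_{k-1}$, not $N_{k-2}$): it is simply that $N_{k-1}$ is an ideal of $L$, which you have already established, so $[va,x]\in[L,N_{k-1}]\subseteq N_{k-1}$. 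Second, your computation yields the twisted identity $\varphi_k(a\bar b)=(ca)\varphi_k(\bar b)$, whereas the definition of $c$-isomorphism requires the untwisted $\varphi_k(a\bar b)=a\varphi_k(\bar b)$; the paper obtains the latter by antisymmetry, writing $v[a,b]=-v[b,a]=-[cb,va]-[vb,a]$ and absorbing $[cb,va]\in N_{k}$ (as $cb\in N_k$ and $N_k$ is an ideal), leaving $[a,vb]$ modulo $N_k$. With these two fixes your argument for injectivity (a nonzero $L$-module surjection from an irreducible module has zero kernel) and the resulting dimension count $\dim N_t=(t+1)\dim\tilde N$ go through exactly as in the paper.
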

\begin{proof}
Since for any $a \in \tilde N$ and $b\in L$ the element $[v^k a, b]=v[v^{k-1} a, b]-[cv^{k-1} a, vb]$ can be presented as an $F$-linear combination
of elements $v^i[c^{k-i} a, v^{k-i}b]$, each $N_k := \sum_{i=0}^{i=k} v^i \tilde N$ is a $\mathbb Z_m$-graded ideal of $L$.

 Recall that $v^m =0$. Thus $N_{m-1}$ is an $H_{m^2}(\zeta)$-invariant ideal of $L$.
Hence  $L=N_{m-1}$.

Let $\varphi_k \colon N_k/N_{k-1} \twoheadrightarrow N_{k+1}/N_k$, where $0 \leqslant k \leqslant m-2$, be the map
defined by $$\varphi_k (b + N_{k-1}) = vb + N_k.$$ 
Denote $\bar b:= b + N_{k-1}$.
Then $c\varphi_k (\bar b) = \zeta^{-1}\varphi_k (c\bar b)$,
\begin{equation*}\begin{split}
 \varphi_k(a \bar b) = v[a,b]+N_k = -v[b,a] + N_k= -[cb,va]-[vb,a]+N_k  \\=
-[vb,a]+N_k=[a,vb]+N_k = a\varphi_k (\bar b)
\text{ for all }a\in L,\ b \in N_k.\end{split}
\end{equation*}
Note that $\tilde N = N_0/N_{-1}$ is an irreducible $\mathbb Z_m$-graded $L$-module.
Therefore, $N_{k+1}/N_k = \varphi_k(N_k/N_{k-1})$ is an irreducible $\mathbb Z_m$-graded $L$-module or zero for any $0 \leqslant k < m-1$. Thus if $L = N_t$, $L \ne N_{t-1}$,
then $\dim N_t = (t+1)\dim \tilde N$ and $L = \bigoplus_{i=0}^t v^i \tilde N$ (direct sum of $\mathbb Z_m$-graded subspaces).
\end{proof}

\begin{lemma}\label{LemmaTaftSimpleNonSemiSimpleLieClassifyUnity}
Assume that we are under the conditions of Lemma~\ref{LemmaTaftSimpleNonSemiSimpleClassifyLieSumDirect}.
In addition, suppose that the field $F$ is algebraically closed  of characteristic $0$.
Then $R=N=N_{t-1}$, $L^{(i)} = v^{t-i} \tilde N$ for $0\leqslant i \leqslant t$, and $L^{(0)}\cong L/R$ is a simple Lie algebra. 
Moreover $\dim (N_k/N_{k-1})=\dim (L/R)$ for all $0 \leqslant k\leqslant t$.
In addition, $\ker v = L^{(0)}$.
\end{lemma}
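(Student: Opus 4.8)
The plan is to feed the filtration produced by Lemma~\ref{LemmaTaftSimpleNonSemiSimpleClassifyLieSumDirect} two soft structural inputs — that $L$ is perfect and that $\ker v$ avoids the radical — and then to match the $v$-filtration with the $\mathbb Z_m$-grading. First I record preliminaries. Since $R\neq 0$ forces $N\neq 0$ (otherwise $R\subseteq Z(L)\subseteq N=0$), we have $\ell\geqslant 2$ and $[N,\tilde N]\subseteq[N,N^{\ell-1}]=N^{\ell}=0$, so $\tilde N\subseteq Z(N)$; hence $N$ acts trivially on $\tilde N$ and, through the $c$-isomorphisms $\varphi_k$, on every quotient $N_k/N_{k-1}$, i.e. $[N,N_k]\subseteq N_{k-1}$. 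As each $\varphi_k$ ($0\leqslant k\leqslant t-1$) is a bijection, $v$ sends $N_k/N_{k-1}$ isomorphically onto $N_{k+1}/N_k$; thus $N_k=N_{k-1}\oplus v^k\tilde N$, every quotient has dimension $d:=\dim\tilde N$, $\dim L=(t+1)d$, $v$ is injective on $N_{t-1}$, and so $\ker v\cap N_{t-1}=0$ and $\dim\ker v\leqslant d$. Finally $[L,L]$ is a nonzero $H_{m^2}(\zeta)$-invariant ideal (it is $c$-invariant and $v[a,b]=[ca,vb]+[va,b]\in[L,L]$), so $H_{m^2}(\zeta)$-simplicity gives $[L,L]=L$: the algebra $L$ is perfect.

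Next I would show $R=N$. Because $[L,R]\subseteq N$, the radical of $L/N$ equals $R/N$ and is central, so $L/N$ is reductive; being also perfect (a quotient of a perfect algebra), it is semisimple, whence $R/N=Z(L/N)=0$ and $R=N\neq L$. I then establish $\ker v\cap N=0$. For homogeneous $u\in\ker v$ one has $v[x,u]=[cx,vu]+[vx,u]=[vx,u]$, and an easy induction shows the ordinary ideal $\langle u\rangle=\sum_{n\geqslant 0}[\underbrace{L,\dots,L}_{n},u]$ is $v$-invariant; it is also graded, hence $H_{m^2}(\zeta)$-invariant, so $\langle u\rangle=L$ for every nonzero homogeneous $u\in\ker v$. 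Were $\ker v\cap N\neq 0$, a nonzero homogeneous element of it would generate $L$ while lying in the proper ideal $N=R$, a contradiction.

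These facts reduce everything to one alignment statement. Once $N_{t-1}\subseteq N$ is known, irreducibility of $L/N_{t-1}\cong\tilde N$ forces $N=N_{t-1}$ (else $N/N_{t-1}$ would be all of it, giving $N=L$); once $c$ acts trivially on $L/N$, any homogeneous $u\in\ker v\cap L^{(r)}$ with $r\neq 0$ has zero Levi part, hence lies in $N$, so $\ker v=L^{(0)}$; and the degree-lowering property of $v$ together with $\ker v\cap N_{t-1}=0$ then forces $\tilde N=L^{(t)}$ and $L^{(i)}=v^{t-i}\tilde N$. Under these, the top layer $v^{t}\tilde N$ is a complement to $N_{t-1}=N$ in $L$, so $L^{(0)}\cong L/N=L/R$, and $\dim(N_k/N_{k-1})=d=\dim L^{(0)}=\dim(L/R)$.

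The hard part, and the main obstacle, is exactly this alignment together with the simplicity of $L^{(0)}$: one must prove $N_{t-1}$ is nilpotent — equivalently that the semisimple quotient $L/N$ sits in the top layer $v^{t}\tilde N$ and that $c$ acts trivially on it, making $\tilde N$ homogeneous of degree $t$. One natural route is to fix a graded Levi subalgebra $S\cong L/N$ (stable under the finite group $\langle c\rangle$) and transport $v$ to the $c$-skew-derivation $\delta(s):=(\text{$S$-component of } vs)$, which lowers degrees and satisfies $\delta c=\zeta c\delta$; a missing-degree argument makes $\delta$ nilpotent, so $(S,c,\delta)$ is itself an $H_{m^2}(\zeta)$-module Lie algebra to which the already-proved simple and semisimple cases (Theorems~\ref{TheoremTaftSimpleSimpleLieClassify} and~\ref{TheoremTaftSimpleSemiSimpleLieClassify}) can be applied. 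The delicate steps I expect to fight with are ensuring that $S$ (or its relevant invariant pieces) really is $H_{m^2}(\zeta)$-simple so these theorems apply, and excluding the non-simple semisimple possibility $S\cong L_\alpha(B)$; once $S$ is shown to be simple with $c|_S=\id$ and $vS\subseteq N$, one gets $N=N_{t-1}$, the identities $L^{(i)}=v^{t-i}\tilde N$, $\ker v=L^{(0)}\cong L/R$ simple, and the dimension equalities, completing the proof.
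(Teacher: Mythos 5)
Your preliminaries are sound (and your derivation of $R=N$ from ``$L$ perfect $\Rightarrow L/N$ perfect and reductive $\Rightarrow$ semisimple'' is actually cleaner than the paper's argument, which instead splits off a $B$-complement $Q$ of $N$ in $R$ and kills it via $[B,Q]\subseteq N\cap Q=0$). But the proof is not complete: everything the lemma actually asserts --- $N=N_{t-1}$, $L^{(i)}=v^{t-i}\tilde N$, simplicity of $L^{(0)}$, and $\ker v=L^{(0)}$ --- is deferred to the ``alignment statement,'' and for that you only offer a strategy whose key hypotheses you yourself flag as unverified. Concretely: (a) the transported map $\delta(s):=(\text{$S$-component of }vs)$ need not satisfy $\delta^m=0$, because $\delta^k(s)$ is \emph{not} the $S$-component of $v^k s$ (each iteration drops an $N$-component whose image under $v$ can re-enter $S$), so $(S,c,\delta)$ being an $H_{m^2}(\zeta)$-module algebra is not established; and (b) even granting that, $S\cong L/N$ is merely semisimple and need not be $H_{m^2}(\zeta)$-simple, so Theorems~\ref{TheoremTaftSimpleSimpleLieClassify} and~\ref{TheoremTaftSimpleSemiSimpleLieClassify} do not apply to it directly, nor is it clear how to exclude $S\cong L_\alpha(B)$. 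These are exactly the points where the lemma's content lives, so the proposal has a genuine gap rather than a routine omission.

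The paper closes this gap by a different mechanism that you did not exploit: the conclusion of Lemma~\ref{LemmaTaftSimpleNonSemiSimpleClassifyLieSumDirect} that \emph{all} quotients $N_k/N_{k-1}$ are irreducible and $c$-isomorphic to one another. Taking a $\mathbb Z_m$-graded Levi subalgebra $B$ (Taft's theorem) and showing via \cite[Lemma~6]{ASGordienko2} that $\tilde N$ is built from a single irreducible $B$-module, one sees that $L$ is a semisimple $B$-module all of whose irreducible constituents are $c$-twists of one module; since $B$ itself is a graded $B$-submodule, $B$ must be $\mathbb Z_m$-graded-simple (otherwise its graded-simple summands would be non-$c$-isomorphic constituents), and $B$ must sit in the top layer ($B\cap N_{t-1}=0$, else $N_t/N_{t-1}$ would carry the zero $B$-action while being $c$-isomorphic to the others). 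Finally, the computation $\varphi_k(a\bar b)=a\varphi_k(\bar b)=(ca)\varphi_k(\bar b)$ shows $(ca)-a$ annihilates every $N_k/N_{k-1}$, hence is central in the semisimple $B$, giving $c|_B=\id$; simplicity of $B=L^{(0)}$, the identification $L^{(i)}=v^{t-i}\tilde N$, nilpotency of $N_{t-1}$, and $\ker v=L^{(0)}$ then all follow by the bookkeeping you outlined. If you want to salvage your write-up, replace the $\delta$-transport step by this ``all composition factors are $c$-isomorphic'' argument.
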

\begin{proof}
First we notice that $[L,L]$ is an $H_{m^2}(\zeta)$-invariant ideal. Hence $L=[L,L]$ and
$L\ne R$.

By~\cite{Taft}, there exists a maximal $\mathbb Z_m$-graded semisimple Lie subalgebra $B\subseteq L$
such that $L = B \oplus R$ (direct sum of $\mathbb Z_m$-graded subspaces), $B \cong L/R$.
Note that $N$ annihilates all irreducible $\mathbb Z_m$-graded $L$-modules
that are factors of the adjoint representation of $L$. In addition, $[L, R] \subseteq N$ (see e.g.~\cite[Proposition 2.1.7]{GotoGrosshans}).
Hence $L/N$ is a reductive $\mathbb Z_m$-graded
Lie algebra and $\tilde N$ is an irreducible $\mathbb Z_m$-graded 
$L/N$-module. By~\cite[Lemma~6]{ASGordienko2},
we have $\tilde N = \bigoplus_{i=0}^s c^i M$
where $M\subseteq \tilde N$ is an $L$-submodule such that $R$ is acting on $M$ by scalar operators, $s \in\mathbb N$.
Since $\tilde N$  is an irreducible 
$\mathbb Z_m$-graded $L$-module, we may assume that $M$ is an irreducible $L$-module.
All $B$-submodules in $M$ are $L$-submodules
since  $R$ is acting on $M$ by scalar operators.
Hence $M$ is an irreducible $B$-module.
 
Since $\tilde N=N_0/N_{-1}$, all $N_k/N_{k-1}$ are $c$-isomorphic to each other, and $B$ is semisimple, the Lie algebra $L$ is a direct sum of irreducible $B$-submodules isomorphic to $c^j M$ where $j \in\mathbb Z$.
Note that the $B$-action on each $M$ and therefore on each $c^j M$
must be nonzero, since $B$ itself is a $B$-submodule of $L$
with a nonzero $B$-action.
On the other hand, there exists a $B$-submodule $Q \subseteq R$ such that $R=N\oplus Q$. Since $[L,R]\subseteq N$, we have $[B,Q]\subseteq N \cap Q = 0$,
i.e. $Q\subseteq L$ is a submodule with the zero $B$-action. Hence $Q=0$ and $R=N$. In particular, all
$N_k/N_{k-1}$ are irreducible $\mathbb Z_m$-graded $B$-modules
$c$-isomorphic to each other.
However, $B\subseteq L$ is a $\mathbb Z_m$-graded $B$-submodule.
If $B$ is not a $\mathbb Z_m$-graded simple Lie algebra, then $B$ is a direct sum
of $\mathbb Z_m$-graded simple Lie subalgebras (this follows e.g. from~\cite[Theorem~9]{ASGordienko4}), which are non-$c$-isomorphic as $B$-modules.
Hence $B$ must be a $\mathbb Z_m$-graded simple Lie algebra
and all $N_k/N_{k-1}$ are $c$-isomorphic to $B$ as $\mathbb Z_m$-graded modules.
Let $B \subseteq N_q$, $B \subsetneqq N_{q-1}$ for some $q \in\mathbb Z_+$.
If $q < t$, then $[B, N_t] \subseteq N_{t-1}$
and $N_t/N_{t-1}$ has the zero $B$-action. Since all $N_k/N_{k-1}$ are $c$-isomorphic,
we get a contradiction. Therefore, $B \cap N_{t-1} = 0$, $L=B \oplus N_{t-1}$ (direct sum of subspaces), and $B \cong L/N_{t-1}$.

We claim that $ca=a$ for all $a\in B$ and therefore $B$ is simple as an ordinary Lie algebra.
In Lemma~\ref{LemmaTaftSimpleNonSemiSimpleClassifyLieSumDirect}
we proved that $\varphi_k(a\bar b) = a\varphi_k(\bar b)$ 
for all $0 \leqslant k \leqslant m-1$ and $a \in L$, $b\in N_k$.
Analogously, one shows that $$\varphi_k(a\bar b) = v[a,b]+N_k = [ca, vb] + [va, b]+ N_k = 
[ca, vb] + N_k = (ca)\varphi_k(\bar b).$$
In other words, $((ca)-a)$ is acting as $0$ on all $N_k/N_{k-1}$
for every $a\in B$. In particular, $((ca)-a)$ belongs to the center of $B$.
Since $B$ is semisimple, we get $ca=a$ for all $a\in B$ and $B \subseteq L^{(0)}$ has a trivial grading.
Hence $B$ is simple as an ordinary Lie algebra.

Note that $B \cong L/N_{t-1} \cong v^t \tilde N$ as $\mathbb Z_m$-graded spaces.
Hence $v^t \tilde N \subseteq L^{(0)}$.  Using
$vc=\zeta cv$, we get $v^i \tilde N \subseteq L^{(t-i)}$.
Since $L=\bigoplus_{i=0}^{m-1} L^{(i)} = \bigoplus_{i=0}^t v^i \tilde N$, we obtain
$L^{(i)} = v^{t-i} \tilde N$ for $0\leqslant i \leqslant t$, $L^{(i)}=0$
for $t+1\leqslant i \leqslant m-1$.
In particular, $B=L^{(0)}=v^t \tilde N$.

Recall that each $N_j = \bigoplus_{i=0}^j v^i \tilde N=\bigoplus_{i=t-j}^t L^{(i)}$ is an ideal.
Hence for $0 \leqslant j \leqslant t$ and $0 \leqslant i \leqslant m-1$
we always have $0 \leqslant t-j+i < t-j+m$,
$$[L^{(i)}, L^{(t-j)}] \subseteq N_j \cap L^{(t-j+i)} \subseteq N_{j-i},$$
and $[L^{(i)}, N_k] \subseteq N_{k-i}$. (We assume that $N_k := 0$ for $k<0$.)
In particular, the ideal $N_{t-1}$ is nilpotent and $R=N=N_{t-1}$.

If $t=m-1$, then $vL^{(0)}=v^m \tilde N = 0$. Since $N_{t-1} \cap (\ker v) = 0$, we get $\ker v =L^{(0)}$.
If $t < m-1$, then $vL^{(0)} \subseteq L^{(m-1)} = 0$. Again, $\ker v =L^{(0)}$.
\end{proof}

\begin{lemma}\label{LemmaTaftSimpleNonSemiSimpleClassifyLieFormula}
Suppose we are under the assumptions of Lemma~\ref{LemmaTaftSimpleNonSemiSimpleLieClassifyUnity}.
Define the $F$-linear map $\varphi \colon L \to L$ by $\varphi(v^k a) = v^{k-1} a$
for all $a \in\tilde N$, $1 \leqslant k \leqslant t$, $\varphi(\tilde N) = 0$.
Then \begin{equation}\label{EqQuantumBinomPhiLie}
[\varphi^k(a),\varphi^\ell(b)]=\binom{k+\ell}{k}_\zeta\ \varphi^{k+\ell}[a,b] \text{ for all }a, b\in L^{(0)} \text{ and } 0 \leqslant k,\ell \leqslant t.
\end{equation}
\end{lemma}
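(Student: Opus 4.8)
The plan is to follow the inductive scheme of Lemma~\ref{LemmaLieTaftSimpleSGradPhiMult}, which is considerably simplified here by the absence of any wrap-around term. First I would record the properties of $\varphi$ to be used. Fix $a\in L^{(0)}$ and write $a=v^t\tilde a$ with $\tilde a\in\tilde N$, which is possible since $L^{(0)}=v^t\tilde N$ by Lemma~\ref{LemmaTaftSimpleNonSemiSimpleLieClassifyUnity}. Then $\varphi^k(a)=v^{t-k}\tilde a\in L^{(k)}$ for $0\leqslant k\leqslant t$, whence $c\varphi^k(a)=\zeta^k\varphi^k(a)$ and $v\varphi^k(a)=\varphi^{k-1}(a)$ for $1\leqslant k\leqslant t$; moreover $\varphi(vx)=x$ for every $x\in L^{(j)}$ with $1\leqslant j\leqslant t$, while $v$ acts as zero on $L^{(0)}=\ker v$. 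Finally, since $\tilde N=L^{(t)}$ and $\varphi(\tilde N)=0$, one has $\varphi^j(a)=0$ for all $j>t$, so that the right-hand side of~(\ref{EqQuantumBinomPhiLie}) vanishes automatically as soon as $k+\ell>t$.

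For $k+\ell\leqslant t$ I would prove~(\ref{EqQuantumBinomPhiLie}) by induction on $k+\ell$. The case $k=\ell=0$ is trivial. If exactly one index vanishes, say $\ell=0$, then $vb=0$ gives $v[\varphi^k(a),b]=[v\varphi^k(a),b]=[\varphi^{k-1}(a),b]$; applying $\varphi$, which is legitimate because $[\varphi^k(a),b]\in L^{(k)}$ with $1\leqslant k\leqslant t$, and invoking the induction hypothesis yields $[\varphi^k(a),b]=\varphi^k[a,b]$ (note $\binom{k}{k}_\zeta=1$), while the case $k=0$ follows by anti-commutativity. For $k,\ell\geqslant 1$ the skew-derivation rule together with $c\varphi^k(a)=\zeta^k\varphi^k(a)$ gives
\[
v[\varphi^k(a),\varphi^\ell(b)] = \zeta^k[\varphi^k(a),\varphi^{\ell-1}(b)]+[\varphi^{k-1}(a),\varphi^\ell(b)].
\]
Inserting the induction hypothesis (both terms have total degree $k+\ell-1$) and collecting coefficients by the quantum Pascal identity~(\ref{EquationMainQuantumBinomial}) turns the right-hand side into $\binom{k+\ell}{k}_\zeta\varphi^{k+\ell-1}[a,b]$; applying $\varphi$ once more, which is legitimate since $[\varphi^k(a),\varphi^\ell(b)]\in L^{(k+\ell)}$ with $1\leqslant k+\ell\leqslant t$, produces $\binom{k+\ell}{k}_\zeta\varphi^{k+\ell}[a,b]$, as required.

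The only genuinely new step, and the one I expect to be the main obstacle, is the range $k+\ell>t$, where I would not attempt to adapt the wrap-around formula of Lemma~\ref{LemmaLieTaftSimpleSGradPhiMult} but instead show that the left-hand side vanishes for purely grading-theoretic reasons. Using $L^{(\ell)}\subseteq N_{t-\ell}$, the inclusion $[L^{(i)},N_k]\subseteq N_{k-i}$, and the convention $N_j=0$ for $j<0$ (all from Lemma~\ref{LemmaTaftSimpleNonSemiSimpleLieClassifyUnity}), I obtain $[\varphi^k(a),\varphi^\ell(b)]\in[L^{(k)},L^{(\ell)}]\subseteq N_{t-k-\ell}=0$, so both sides of~(\ref{EqQuantumBinomPhiLie}) are zero in this range. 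Together with the induction above this establishes the formula for all $0\leqslant k,\ell\leqslant t$. The delicacy to watch throughout is to apply $\varphi$ only in degrees $1,\dots,t$, where it genuinely inverts $v$; keeping $k+\ell\leqslant t$ in the inductive regime is exactly what guarantees this, and the overflow regime is disposed of by the vanishing argument rather than by $\varphi$.
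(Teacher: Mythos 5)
Your proof is correct and follows essentially the same route as the paper's: establish that $\varphi$ inverts $v$ on $N_{t-1}$, settle the case where one index vanishes via the skew-derivation rule, and then induct on $k+\ell$ using the quantum Pascal identity~(\ref{EquationMainQuantumBinomial}). Your explicit disposal of the overflow range $k+\ell>t$ via $[L^{(k)},L^{(\ell)}]\subseteq N_{t-k-\ell}=0$ addresses a point the paper leaves implicit (it only says the induction proceeds ``analogously'' to Lemma~\ref{LemmaLieTaftSimpleSGradPhiMult}), and it is a correct and worthwhile addition.
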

\begin{proof}
Note that $\varphi(va)=a$ for all $a\in N_{t-1}$.
Thus for every $b\in L^{(0)}$, $a=vu$, $u\in N_{t-1}$
we have $$c\varphi(a)=c\varphi(vu)=cu=\varphi(vcu)=\zeta\varphi(cvu)=\zeta\varphi(ca),$$
$$\varphi[a,b]=\varphi[vu,b]=\varphi(v[u,b]-[cu, vb])=[u,b]=[\varphi(a),b].$$
Since  $\varphi(\tilde N) = 0$, $L = v N_{t-1} \oplus \tilde N$ (direct sum of $\mathbb Z_m$-graded subspaces),
and $\tilde N$ is an ideal,
we have $c\varphi(a) = \zeta \varphi(ca)$,  
$\varphi[a,b]=[\varphi(a),b]$  for all $a\in L$, $b\in L^{(0)}$.
This proves~(\ref{EqQuantumBinomPhiLie}) for $k=0$ or $\ell=0$.

The case of arbitrary $k,\ell \geqslant 1$ is done by induction
using
\begin{equation*}\begin{split}
[\varphi^k(a),\varphi^\ell(b)]= \varphi \left(v[\varphi^k(a),\varphi^\ell(b)]\right)
=\varphi([c\varphi^k(a), \varphi^{\ell-1}(b)]+[\varphi^{k-1}(a),\varphi^\ell(b)])\end{split}\end{equation*}
analogously to Lemma~\ref{LemmaLieTaftSimpleSGradPhiMult}.
\end{proof}

\begin{proof}[Proof of Theorem~\ref{TheoremTaftSimpleNonSemiSimpleLieClassify}.]
By Lemma~\ref{LemmaTaftSimpleNonSemiSimpleLieClassifyUnity}, 
$L^{(0)}$ is a simple Lie algebra.
Let $a,b \in L^{(0)}$ such that $[a,b]\ne 0$.
Then $\varphi^t [a,b]\ne 0$.

Note that $[\varphi^t(a), \varphi(b)]=\binom{t+1}{t}_\zeta \varphi^{t+1}[a,b] = 0$.
However \begin{equation*}\begin{split}0=v[\varphi^t(a), \varphi(b)]
=[v\varphi^t(a), \varphi(b)]+[c\varphi^t(a), v\varphi(b)]
\\= \left(\binom t{t-1}_\zeta+\zeta^t\right)\varphi^t[a,b]
=(t+1)_\zeta\ \varphi^t[a,b].\end{split}\end{equation*} Hence $(t+1)_\zeta=0$ and $m=t+1$.
Now~(\ref{EqQuantumBinomPhiLie}) and Lemma~\ref{LemmaTaftSimpleNonSemiSimpleLieClassifyUnity} imply the theorem.
\end{proof}

\begin{remark}
Since the maximal semisimple Lie subalgebra $\ker v$ is uniquely determined, any two such $H_{m^2}(\zeta)$-simple Lie algebras $L$ are isomorphic as $H_{m^2}(\zeta)$-module Lie algebras if and only if their Lie subalgebras $\ker v$ are isomorphic as ordinary algebras. Moreover, all automorphisms of $L$ as an $H_{m^2}(\zeta)$-module Lie algebra are induced by the automorphisms of $\ker v$ as a Lie algebra.
Indeed, let $\psi \colon L \mathrel{\widetilde\to} L$ be an automorphism of $L$ as an $H_{m^2}(\zeta)$-module Lie algebra. Since $\tilde N = N^{m-1}$, we have $\psi(\tilde N) = \tilde N$
and $\psi(v^k \tilde N)=v^k \tilde N$ for
all $0\leqslant k < m$. Now $v^k \psi(\varphi^k(a))=\psi(a)$
for all $a\in \ker v$ implies $\psi(\varphi^k(a))=\varphi^k(\psi(a))$ and $\psi$ is uniquely determined by its restriction
on $\ker v$.
\end{remark}

\section{Polynomial $H$-identities}\label{SectionHPI}

In Section~\ref{SectionHPI-expTaftSLie} we prove that if $L$ is a finite dimensional $H_{m^2}(\zeta)$-simple
$H_{m^2}(\zeta)$-module Lie algebra over an algebraically closed field of characteristic $0$, then $\PIexp^{H_{m^2}(\zeta)}(L)=\dim L$. In particular, the $H_{m^2}(\zeta)$-PI-exponent of $L$ is integer
and the analog of Amitsur's conjecture holds for polynomial $H_{m^2}(\zeta)$-identities of $L$.

  Let $F$ be a field and let $F \lbrace X \rbrace$ be the absolutely free nonassociative algebra
   on the set $X := \lbrace x_1, x_2, x_3, \ldots \rbrace$.
  Then $F \lbrace X \rbrace = \bigoplus_{n=1}^\infty F \lbrace X \rbrace^{(n)}$
  where $F \lbrace X \rbrace^{(n)}$ is the $F$-linear span of all monomials of total degree $n$.
   Let $H$ be a Hopf algebra over a field $F$. Consider the algebra $$F \lbrace X | H\rbrace
   :=  \bigoplus_{n=1}^\infty H^{{}\otimes n} \otimes F \lbrace X \rbrace^{(n)}$$
   with the multiplication $(u_1 \otimes w_1)(u_2 \otimes w_2):=(u_1 \otimes u_2) \otimes w_1w_2$
   for all $u_1 \in  H^{{}\otimes j}$, $u_2 \in  H^{{}\otimes k}$,
   $w_1 \in F \lbrace X \rbrace^{(j)}$, $w_2 \in F \lbrace X \rbrace^{(k)}$.
We use the notation $$x^{h_1}_{i_1}
x^{h_2}_{i_2}\ldots x^{h_n}_{i_n} := (h_1 \otimes h_2 \otimes \ldots \otimes h_n) \otimes x_{i_1}
x_{i_2}\ldots x_{i_n}$$ (the arrangements of brackets on $x_{i_j}$ and on $x^{h_j}_{i_j}$
are the same). Here $h_1 \otimes h_2 \otimes \ldots \otimes h_n \in H^{{}\otimes n}$,
$x_{i_1} x_{i_2}\ldots x_{i_n} \in F \lbrace X \rbrace^{(n)}$. 

Note that if $(\gamma_\beta)_{\beta \in \Lambda}$ is a basis in $H$, 
then $F \lbrace X | H\rbrace$ is isomorphic to the absolutely free nonassociative algebra over $F$ with free formal  generators $x_i^{\gamma_\beta}$, $\beta \in \Lambda$, $i \in \mathbb N$.
 
    Define on $F \lbrace X | H\rbrace$ the structure of a left $H$-module
   by $$h\,(x^{h_1}_{i_1}
x^{h_2}_{i_2}\ldots x^{h_n}_{i_n})=x^{h_{(1)}h_1}_{i_1}
x^{h_{(2)}h_2}_{i_2}\ldots x^{h_{(n)}h_n}_{i_n},$$
where $h_{(1)}\otimes h_{(2)} \otimes \ldots \otimes h_{(n)}$
is the image of $h$ under the comultiplication $\Delta$
applied $(n-1)$ times, $h\in H$. Then $F \lbrace X | H\rbrace$ is \textit{the absolutely free $H$-module nonassociative algebra} on $X$, i.e. for each map $\psi \colon X \to A$ where $A$ is an $H$-module algebra,
there exists the unique homomorphism $\bar\psi \colon 
F \lbrace X | H\rbrace \to A$ of algebras and $H$-modules, such that $\bar\psi\bigl|_X=\psi$.
Here we identify $X$ with the set $\lbrace x^{1_H}_j \mid j \in \mathbb N\rbrace \subset F \lbrace X | H\rbrace$.

Consider the $H$-invariant ideal $I$ in $F\lbrace X | H \rbrace$
generated by the set \begin{equation}\label{EqSetOfHGen}
\bigl\lbrace u(vw)+v(wu)+w(uv) \mid u,v,w \in  F\lbrace X | H \rbrace\bigr\rbrace \cup\bigl\lbrace u^2 \mid u \in  F\lbrace X | H \rbrace\bigr\rbrace.
\end{equation}
 Then $L(X | H) := F\lbrace X | H \rbrace/I$
is \textit{the free $H$-module Lie algebra}
on $X$, i.e. for any $H$-module Lie algebra $L$ 
and a map $\psi \colon X \to L$, there exists the unique homomorphism $\bar\psi \colon L(X | H) \to L$
of Lie algebras and $H$-modules such that $\bar\psi\bigl|_X =\psi$. 
 We refer to the elements of $L(X | H)$ as \textit{Lie $H$-polynomials}.


\begin{remark} If $H$ is cocommutative and $\ch F \ne 2$, then $L(X | H)$ is the ordinary
free Lie algebra with free generators $x_i^{\gamma_\beta}$, $\beta \in \Lambda$, $i \in \mathbb N$
where   $(\gamma_\beta)_{\beta \in \Lambda}$ is a basis in $H$, since the ordinary ideal of 
$F\lbrace X | H \rbrace$ generated by~(\ref{EqSetOfHGen})
is already $H$-invariant.
However, if $h_{(1)} \otimes h_{(2)} \ne h_{(2)} \otimes h_{(1)}$ for some $h \in H$,
we still have $$[x^{h_{(1)}}_i, x^{h_{(2)}}_j]=h[x_i, x_j]=-h[x_j, x_i]=-[x^{h_{(1)}}_j, x^{h_{(2)}}_i]
= [x^{h_{(2)}}_i, x^{h_{(1)}}_j]$$ in $L(X | H)$ for all $i,j \in\mathbb N$,
i.e. in the case $h_{(1)} \otimes h_{(2)} \ne h_{(2)} \otimes h_{(1)}$ the Lie algebra $L(X | H)$ is not free as an ordinary Lie algebra.
\end{remark}

Let $L$ be an $H$-module Lie algebra for
some Hopf algebra $H$ over a field $F$.
 An $H$-polynomial
 $f \in L ( X | H )$
 is a \textit{$H$-identity} of $L$ if $\psi(f)=0$
for all homomorphisms $\psi \colon L(X|H) \to L$
of Lie algebras and $H$-modules. In other words, $f(x_1, x_2, \ldots, x_n)$
 is a polynomial $H$-identity of $L$
if and only if $f(a_1, a_2, \ldots, a_n)=0$ for any $a_i \in L$.
 In this case we write $f \equiv 0$.
The set $\Id^H(L)$ of all polynomial $H$-identities
of $L$ is an $H$-invariant ideal of $L(X|H)$.

\begin{example} Note that if $m=2$ and $\zeta=-1$, then $(1-c)L(\mathfrak{sl}_2(F),0)=L(\mathfrak{sl}_2(F),0)^{(1)}$
and the commutator of any two elements of $L(\mathfrak{sl}_2(F),0)^{(1)}$ is zero
by~\eqref{EqMultTaftSimpleLiePresent}. Hence
$$[x^{1-c},y^{1-c}]\in \Id^{H_4(-1)}(L(\mathfrak{sl}_2(F),0)).$$
\end{example}

Denote by $V^H_n$ the space of all multilinear Lie $H$-polynomials
in $x_1, \ldots, x_n$, $n\in\mathbb N$, i.e.
$$V^{H}_n = \langle [x^{h_1}_{\sigma(1)},
x^{h_2}_{\sigma(2)}, \ldots, x^{h_n}_{\sigma(n)}]
\mid h_i \in H, \sigma\in S_n \rangle_F \subset L( X | H ).$$
Then the number $c^H_n(L):=\dim\left(\frac{V^H_n}{V^H_n \cap \Id^H(L)}\right)$
is called the $n$th \textit{codimension of polynomial $H$-identities}
or the $n$th \textit{$H$-codimension} of $L$.

\begin{remark}
One can treat polynomial $H$-identities of $L$ as $H$-identities of a nonassociative
$H$-module algebra (i.e. use $F\lbrace X | H\rbrace$ instead of $L(X | H)$) and define their codimensions.
However those codimensions will coincide with $c^{H}_n(L)$ since the $n$th $H$-codimension
equals the dimension of the subspace in $\Hom_F(L^{{}\otimes n}; L)$ that consists of those $n$-linear functions that can be represented by $H$-polynomials.
\end{remark}

Recall that the limit $\PIexp^H(L):=\lim\limits_{n\to\infty}
 \sqrt[n]{c^H_n(L)}$ (if it exists) is called the \textit{$H$-PI-exponent} of $L$.

One of the main tools in the investigation of polynomial
identities is provided by the representation theory of symmetric groups.
 The symmetric group $S_n$  acts
 on the space $\frac {V^H_n}{V^H_{n}
  \cap \Id^H(L)}$
  by permuting the variables.
   If the base field $F$ is of characteristic $0$,
  then irreducible $FS_n$-modules are described by partitions
  $\lambda=(\lambda_1, \ldots, \lambda_s)\vdash n$ and their
  Young diagrams $D_\lambda$.
   The character $\chi^H_n(L)$ of the
  $FS_n$-module $\frac {V^H_n}{V^H_n
   \cap \Id^H(L)}$ is
   called the $n$th
  \textit{cocharacter} of polynomial $H$-identities of $L$.
  We can rewrite it as
  a sum $$\chi^H_n(L)=\sum_{\lambda \vdash n}
   m(L, H, \lambda)\chi(\lambda)$$ of
  irreducible characters $\chi(\lambda)$.
Let  $e_{T_{\lambda}}=a_{T_{\lambda}} b_{T_{\lambda}}$
and
$e^{*}_{T_{\lambda}}=b_{T_{\lambda}} a_{T_{\lambda}}$
where
$a_{T_{\lambda}} = \sum_{\pi \in R_{T_\lambda}} \pi$
and
$b_{T_{\lambda}} = \sum_{\sigma \in C_{T_\lambda}}
 (\sign \sigma) \sigma$,
be the Young symmetrizers corresponding to a Young tableau~$T_\lambda$.
Then $M(\lambda) = FS_n e_{T_\lambda} \cong FS_n e^{*}_{T_\lambda}$
is an irreducible $FS_n$-module corresponding to
 a partition~$\lambda \vdash n$.
  We refer the reader to~\cite{Bahturin, DrenKurs, ZaiGia}
   for an account
  of $S_n$-representations and their applications to polynomial
  identities.

 \section{Exponent of $H_{m^2}(\zeta)$-identities of $H_{m^2}(\zeta)$-simple Lie algebras}\label{SectionHPI-expTaftSLie}

  In this section we prove the existence of the $H_{m^2}(\zeta)$-PI-exponent for $H_{m^2}(\zeta)$-simple Lie algebras:
 
   \begin{theorem}\label{TheoremTaftSimpleLieHPIexpExists} Let $L$ be a finite dimensional
    $H_{m^2}(\zeta)$-simple Lie algebra over an algebraically closed field $F$ of characteristic $0$.
   Then there exist $C>0$ and $r\in \mathbb R$ such that
   $$C n^r (\dim L)^n \leqslant c_n^{H_{m^2}(\zeta)}(L) \leqslant (\dim L)^{n+1}\text{ for all }n\in \mathbb N.$$
   In particular, $\PIexp^{H_{m^2}(\zeta)}(L) = \dim L$ and the analog of Amitsur's conjecture holds
   for $L$.
  \end{theorem}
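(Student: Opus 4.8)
The plan is to prove the two inequalities separately; the upper one is formal, and the lower one carries all the content. Throughout write $H := H_{m^2}(\zeta)$ and $d := \dim L$. The upper bound is immediate from the remark following the definition of the $H$-codimension: $c_n^{H}(L)$ equals the dimension of the subspace of $\Hom_F(L^{\otimes n}, L)$ consisting of those $n$-linear maps representable by multilinear Lie $H$-polynomials, whence $c_n^{H}(L) \leqslant \dim \Hom_F(L^{\otimes n}, L) = (\dim L)^{n+1}$. This already gives $\overline{\lim}_{n\to\infty} \sqrt[n]{c_n^{H}(L)} \leqslant \dim L$, so the whole problem is the lower bound.

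For the lower bound I would pass to the $S_n$-cocharacter $\chi_n^{H}(L) = \sum_{\lambda \vdash n} m(L, H, \lambda)\,\chi(\lambda)$ and use the standard dictionary: $m(L, H, \lambda) \neq 0$ is equivalent to the existence of a Lie $H$-polynomial $e_{T_\lambda} f \notin \Id^{H}(L)$, and such a polynomial is alternating in the variables indexed by each column of $T_\lambda$. Since a multilinear polynomial alternating in more than $\dim L$ variables is automatically an $H$-identity, every $\lambda$ with $m(L,H,\lambda)\neq 0$ has height at most $d$. Thus it suffices to produce, for every large $n$, a multilinear non-identity alternating in $\lfloor n/d\rfloor$ pairwise disjoint sets of $d$ variables; this forces $m(L, H, \mu) \neq 0$ for some $\mu \vdash n$ of height exactly $d$. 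The hook length formula then furnishes $C > 0$, $r \in \mathbb R$ and, for each $n$, a partition $\lambda^{(n)} \vdash n$ of height $d$ with $\dim M(\lambda^{(n)}) \geqslant C n^r d^n$, and $c_n^{H}(L) \geqslant m(L,H,\lambda^{(n)})\dim M(\lambda^{(n)}) \geqslant C n^r d^n$.

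To build the alternating non-identity I would invoke the classification. By Theorems~\ref{TheoremTaftSimpleSimpleLieClassify}, \ref{TheoremTaftSimpleSemiSimpleLieClassify} and~\ref{TheoremTaftSimpleNonSemiSimpleLieClassify}, either $vL = 0$, in which case $L$ is $\mathbb Z_m$-graded-simple and the $H$-identities coincide with the $\mathbb Z_m$-graded identities, so that the required non-identities exist because the graded PI-exponent of a graded-simple Lie algebra equals its dimension (cf.~\cite{ASGordienko5}); or $L \cong L(B,\gamma)$ with $B = L^{(0)}$ a simple Lie algebra. In the latter case I fix dual bases of $B$ for its nondegenerate Killing form and use the twisted generators $x^{c^a v^b}$ to reach every homogeneous component $L^{(i)} = \varphi^{i}(B)$. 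The relations $vL^{(k)} = L^{(k-1)}$ of Lemma~\ref{LemmaLieTaftSimpleSGradVIm} together with the bracket formula~(\ref{EqMultTaftSimpleLiePresent}), in which $[\varphi^k(a),\varphi^\ell(b)]$ is a \emph{nonzero} scalar multiple of $\varphi^{k+\ell}[a,b]$ whenever $k+\ell<m$, let one transport the classical $\dim B$-alternating non-identity of the simple algebra $B$ up to a non-identity alternating across all $m$ graded components, thereby reaching the base $m\dim B = \dim L$.

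The hard part is the non-semisimple case $L(B,0)$, where the solvable radical $R = N = \bigoplus_{i=1}^{m-1} L^{(i)}$ is nonzero, $L$ carries no nondegenerate invariant form, and the ordinary Lie exponent is only $\dim B$ — all that any form-free argument through the quotient $L/R = B$ can detect. Hence the factor $m$ in $\dim L = m\dim B$ must be recovered \emph{through the skew-derivation}: the radical components $\varphi^i(B)$ have to be coupled into the alternating polynomial by means of $v$ and the filtration $N_k$, with careful bookkeeping of the degree shifts $vL^{(k)} = L^{(k-1)}$ and of the quantum-binomial scalars in~(\ref{EqMultTaftSimpleLiePresent}), which survive precisely because no wrap-around $k+\ell \geqslant m$ (where the coefficient involves $\gamma = 0$) is triggered. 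Checking that the $d$-fold alternation is not destroyed by this coupling and still yields a nonzero evaluation is the crux; once it is carried out, $m(L, H, \lambda^{(n)}) \geqslant 1$ for the chosen fat partitions, the lower bound $C n^r(\dim L)^n \leqslant c_n^{H}(L)$ follows, and combined with the upper bound it gives $\PIexp^{H}(L) = \lim_{n\to\infty}\sqrt[n]{c_n^{H}(L)} = \dim L$.
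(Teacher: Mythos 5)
Your architecture matches the paper's: the upper bound via $\dim\Hom_F(L^{\otimes n},L)$, the reduction of the lower bound to exhibiting a multilinear non-identity alternating in many disjoint sets of $\dim L$ variables, and the passage through fat partitions and the hook formula are all exactly the paper's route (the semisimple cases are disposed of by citing~\cite{ASGordienko5}, as you suggest). But you have correctly located the crux and then left it unproved: your last paragraph says that coupling the radical components into the alternating polynomial ``is the crux; once it is carried out\dots'' --- and that carrying-out is the entire content of the paper's key Lemma~\ref{LemmaTaftSimpleLieAlt}. Without it there is no proof, because, as you yourself observe, any argument factoring through $L/R\cong B$ can only see alternation in $\dim B$ variables and yields exponent $\dim B$, not $m\dim B$.

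The missing construction is worth recording, since it is not a routine verification. One invokes Razmyslov's theorem to get an associative polynomial $f_0$ in adjoint operators, alternating in many sets of $\ell=\dim B$ variables, whose evaluation on any basis of $B$ is a nonzero scalar operator on $L$. The trick that recovers the factor $m$ is to decorate the variables with the $H$-action: one forms $f_0(\ad(x_{1 1 i}^{v^{i-1}}),\dots,\ad(x_{1\ell i}^{v^{i-1}});\dots)$, takes the product over $i=1,\dots,m$, and alternates over the sets $X_j=\{x_{jti}\mid 1\leqslant t\leqslant\ell,\ 1\leqslant i\leqslant m\}$ of size $\ell m=\dim L$. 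Substituting $x_{jti}=v^{m-i}b_t$ for a basis $b_1,\dots,b_\ell$ of $L^{(m-1)}$ sends every decorated variable to $v^{m-1}b_t\in B$, so each copy of $f_0$ evaluates to a nonzero scalar; and the relation $v^m=0$ kills every term of the alternation in which a variable from level $i$ is replaced by one from a level $i'<i$, so the $\ell m$-fold alternation degenerates into a product of the $\ell$-fold alternations already built into $f_0$ and the value is a nonzero multiple $(\ell!)^{kmr}$ of the original. This triangularity via $v^m=0$ is the idea your sketch is missing; the ``careful bookkeeping of quantum-binomial scalars'' you propose is not what makes the alternation survive. (A smaller inaccuracy: to force the fat partition you should work with $e^{*}_{T_\lambda}=b_{T_\lambda}a_{T_\lambda}$, whose row-symmetrization is what bounds $\sum_{i<\ell m}\lambda_i$; the element $e_{T_\lambda}f$ is not alternating in the columns as you assert.)
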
 

First we need the following standard observation:

  \begin{lemma}\label{LemmaHStripeTheorem}
  Let $L$ be a finite dimensional $H$-module Lie algebra over a field of characteristic $0$.
    Let $\lambda \vdash n$, $n\in\mathbb N$.
    Suppose $\lambda_{(\dim L)+1} > 0$. Then $m(L, H,\lambda)= 0$.
  \end{lemma}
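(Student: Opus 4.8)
The plan is to prove that if a partition $\lambda \vdash n$ has more than $\dim L$ rows, i.e.\ $\lambda_{(\dim L)+1} > 0$, then the multiplicity $m(L,H,\lambda)$ in the cocharacter vanishes. The standard approach uses the correspondence between multiplicities and the action of Young symmetrizers. Recall that $m(L,H,\lambda)$ equals the number of times the irreducible $FS_n$-module $M(\lambda)$ appears in $\frac{V_n^H}{V_n^H \cap \Id^H(L)}$. A nonzero multiplicity is equivalent to the existence of a multilinear Lie $H$-polynomial $f \in V_n^H$ and a Young tableau $T_\lambda$ such that $e_{T_\lambda}^* f = b_{T_\lambda} a_{T_\lambda} f$ is \emph{not} a polynomial $H$-identity of $L$. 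So it suffices to show that every polynomial of the form $b_{T_\lambda} a_{T_\lambda} f$ is an identity whenever $\lambda$ has at least $(\dim L)+1$ rows.

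First I would recall the meaning of the symmetrizer $b_{T_\lambda}$: the factor $b_{T_\lambda} = \sum_{\sigma \in C_{T_\lambda}}(\sign\sigma)\sigma$ alternates the variables lying in each column of $T_\lambda$. If $\lambda$ has more than $\dim L$ rows, then the first column of $T_\lambda$ contains at least $(\dim L)+1$ cells, so $b_{T_\lambda}$ antisymmetrizes a set of at least $(\dim L)+1$ of the variables $x_1,\dots,x_n$. The heart of the argument is then a substitution/alternation principle: to check whether $b_{T_\lambda} a_{T_\lambda} f$ is an identity, it is enough to evaluate on basis elements of $L$, and one may assume the variables in each column receive distinct basis values (evaluations with a repeated basis value in an antisymmetrized column cancel in pairs). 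Since the number of distinct basis elements of $L$ is exactly $\dim L$, a column of length $(\dim L)+1$ forces at least two of the alternated variables to be assigned the same basis element, so every such evaluation vanishes.

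The main technical point — and the step I expect to require the most care — is justifying that it suffices to substitute basis elements of $L$ for the variables, and that the $H$-action does not disturb the alternation argument. Concretely, each variable $x_i$ appears in $f$ decorated as $x_i^{h}$ for various $h\in H$, but multilinearity in the \emph{underlying} variables $x_1,\dots,x_n$ means we may expand each $h$ in a fixed basis of $H$ and treat the values $h\,a$ (for $a$ a basis element of $L$) as the relevant inputs. The antisymmetrizer acts on the variable slots, not on the Hopf-algebra decorations, so the column-alternation still forces a clash of \emph{$L$-basis inputs} once the column length exceeds $\dim L$; the resulting evaluation is an alternating multilinear function of more than $\dim L$ vectors in $L$ and hence is zero. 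I would phrase this cleanly by noting that $V_n^H/(V_n^H\cap\Id^H(L))$ embeds into the space of multilinear maps $L^{\otimes n}\to L$, under which the action of $b_{T_\lambda}$ becomes genuine antisymmetrization in the corresponding arguments.

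Finally I would conclude: since every evaluation of $b_{T_\lambda} a_{T_\lambda} f$ vanishes on $L$, this element lies in $\Id^H(L)$ for every $f$ and every tableau $T_\lambda$; hence $M(\lambda)$ does not occur and $m(L,H,\lambda)=0$. This lemma then serves, together with the structure theorems proved earlier, to bound the degrees of partitions contributing to the cocharacter and thereby to establish the upper bound $c_n^{H_{m^2}(\zeta)}(L)\leqslant (\dim L)^{n+1}$ in Theorem~\ref{TheoremTaftSimpleLieHPIexpExists}.
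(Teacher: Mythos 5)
Your proposal is correct and follows essentially the same route as the paper: reduce to showing $e^{*}_{T_\lambda} f = b_{T_\lambda} a_{T_\lambda} f \in \Id^H(L)$ for every $f \in V_n^H$, substitute basis elements by multilinearity, and observe that the column alternation performed by $b_{T_\lambda}$ forces a repeated basis element once a column has length exceeding $\dim L$, so every evaluation vanishes. Your extra remark that the $H$-decorations do not interfere with the alternation on the variable slots is a welcome clarification of a point the paper leaves implicit, but it is not a different argument.
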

  \begin{proof}
    It is sufficient to prove that $e^{*}_{T_\lambda} f \in \Id^H(L)$
    for all $f\in V_n^H$.
Fix some basis of $L$.
Since polynomials are multilinear, it is
sufficient to substitute only basis elements.
 Note that
$e^{*}_{T_\lambda} = b_{T_\lambda} a_{T_\lambda}$
where $b_{T_\lambda}$ alternates the variables of each column
of $T_\lambda$. Hence if we make a substitution and $
e^{*}_{T_\lambda} f$ does not vanish, this implies
 that different basis elements
are substituted for the variables of each column.
But if $\lambda_{(\dim L)+1} > 0$, then the length of the first
 column is greater
than $\dim L$. Therefore,
 $e^{*}_{T_\lambda} f \in \Id^H(L)$.
  \end{proof} 
  
 Now we prove the existence of a polynomial $H$-non-identity with many alternations: 
\begin{lemma}\label{LemmaTaftSimpleLieAlt} Let $L$ be a finite dimensional
  non-semisimple  $H_{m^2}(\zeta)$-simple Lie algebra over an algebraically closed field $F$ of characteristic $0$. Let $\ell := \dim L^{(0)}$.
Then exists a number $r \in \mathbb N$ such that for every $n\geqslant \ell m r + 1$
there exist disjoint subsets $X_1$, \ldots, $X_{kr} \subseteq \lbrace x_1, \ldots, x_n
\rbrace$, $k := \left[\frac{n-1}{\ell m r}\right]$,
$|X_1| = \ldots = |X_{kr}|=\ell m$ and a polynomial $f \in V^{H_{m^2}(\zeta)}_n \backslash
\Id^{H_{m^2}(\zeta)}(L)$ alternating in the variables of each set $X_j$.
\end{lemma}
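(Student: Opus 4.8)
The plan is to reduce the statement to the explicit model of Theorem~\ref{TheoremTaftSimpleNonSemiSimpleLieClassify} and then to construct the required polynomial by hand, engineering the $H$-action so that a single ``generator'' absorbs a full basis of $L$ into its alternations while the spine of the computation stays inside the simple algebra $L^{(0)}$. By Theorem~\ref{TheoremTaftSimpleNonSemiSimpleLieClassify} I may assume $L = L(B,0)$ with $B = L^{(0)} = \ker v$ a simple Lie algebra, $\dim B = \ell$, so that $\dim L = \ell m$ and each requested set $X_j$ has the \emph{full} size $\dim L$. I record the structure I will use: $L^{(k)} = \varphi^k(B)$, the skew-derivation lowers the grading ($v\varphi(a)=a$, $vB=0$), and $[\varphi^k(a),\varphi^s(b)] = \binom{k+s}{k}_\zeta \varphi^{k+s}[a,b]$ for $k+s<m$, and $0$ otherwise. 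Fix a basis $b_1,\ldots,b_\ell$ of $B$ and put $w_{j,k}:=\varphi^k(b_j)$, a basis of $L$. The two $H$-operators I exploit are the grading projections $p_k:=\frac1m\sum_{i=0}^{m-1}\zeta^{-ki}c^i$, with $p_k w_{j,k'}=\delta_{kk'}w_{j,k'}$, and the \emph{folding} elements $h_k:=v^k p_k$; since $vc=\zeta cv$ gives $vp_k=p_{k-1}v$, the element $h_k$ annihilates every layer except $L^{(k)}$ and maps $w_{j,k}\mapsto b_j\in B$ bijectively.

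The heart of the argument is the generator. First I invoke the classical building block for simple Lie algebras (cf.~\cite{ZaiLie,ZaiGia}): there exist a fixed $r\in\mathbb N$, depending only on $B$, and a multilinear Lie polynomial $\psi(z;Y_1,\ldots,Y_r)$ alternating in each of $r$ disjoint sets $Y_1,\ldots,Y_r$ of size $\ell$ and linear in $z$, together with a basis substitution under which $z\mapsto\psi(z;\,\text{basis})$ is a nonzero scalar operator $c_0\,\id_B$ on $B$ (this rests on $\ad B$ generating $\End B$ and an alternation/sign analysis). I lift $\psi$ across the $m$ layers by an $m$-fold nesting with the decorations $h_k$: introduce blocks $G_{i,k}$ ($1\le i\le r$, $0\le k\le m-1$) of size $\ell$ and set
\[ \Phi = \psi\left( \cdots \psi\left( \psi(z;\, G_{1,0}^{h_0},\ldots,G_{r,0}^{h_0});\, G_{1,1}^{h_1},\ldots,G_{r,1}^{h_1}\right)\cdots;\, G_{1,m-1}^{h_{m-1}},\ldots,G_{r,m-1}^{h_{m-1}}\right), \]
and take $\Phi$ antisymmetrized over each expanded set $X_i:=\bigcup_{k=0}^{m-1}G_{i,k}$ of size $\ell m$. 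On the substitution of the $L$-basis, the decoration $h_k$ kills every layer but $L^{(k)}$, so in the antisymmetrization of $X_i$ only the block-respecting permutations survive (any mixing of $G_{i,k}$ with $G_{i,k'}$, $k\ne k'$, produces a factor $h_k w_{j,k'}=0$); the surviving terms reproduce, layer by layer, the nonzero scalar computation of $\psi$. Hence $z\mapsto\Phi(z;\,\text{$L$-basis})=d_0\,\id_B$ with $d_0\ne0$, and $\Phi$ is alternating in each of the $r$ sets $X_1,\ldots,X_r$, each of size $\ell m=\dim L$.

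With $\Phi$ in hand the rest is bookkeeping. For $n\geqslant \ell m r+1$ put $k:=\left[\frac{n-1}{\ell m r}\right]\geqslant 1$ and choose $kr$ pairwise disjoint sets $X_1,\ldots,X_{kr}\subseteq\{x_1,\ldots,x_n\}$ of size $\ell m$. Nest $k$ copies of $\Phi$ on disjoint variable sets, feeding the output of one into the $z$-slot of the next,
\[ f_0 = \Phi\left(\Phi\left(\cdots \Phi(z_0)\cdots\right)\right), \]
so that on the $L$-basis substitution $f_0$ evaluates to $d_0^{\,k} z_0\ne0$ for any $0\ne z_0\in B$; thus $f_0\notin\Id^{H_{m^2}(\zeta)}(L)$ and $f_0$ is alternating in each of the $kr$ sets $X_j$. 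Since $kr\ell m\leqslant n-1<n$, the polynomial $f_0$ uses fewer than $n$ variables; to obtain $f\in V^{H_{m^2}(\zeta)}_n$ of exact degree $n$ I attach the remaining variables through long left-normed commutators. Because $B=[B,B]$ and $L=[L,L]$, these extra non-alternating variables can be substituted so that the value is unchanged up to a nonzero scalar, preserving both the $kr$ alternations and the non-identity property, which yields the desired $f$.

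The main obstacle is the generator of the second paragraph: extracting from the bare simplicity of $B$ a polynomial alternating in $\dim B$ variables whose substitution realizes a genuine nonzero scalar operator --- the Lie analog of a central polynomial --- and, above all, lifting it across the $m$ layers so that the \emph{full} antisymmetrization over $\ell m$ variables does not collapse. The delicate point is precisely the interaction of anticommutativity with the $\mathbb Z_m$-grading in the layer-locking step: one must verify that the decorations $h_k=v^kp_k$ force all surviving permutations to respect the layers and that the corresponding sign contributions do not cancel. Everything after that --- the nesting to produce $kr$ alternating sets and the padding to degree $n$ --- is routine.
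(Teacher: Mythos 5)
Your proof is correct and follows the same overall strategy as the paper's: reduce to $L\cong L(B,0)$ via Theorem~\ref{TheoremTaftSimpleNonSemiSimpleLieClassify}, take a Razmyslov-type polynomial alternating in sets of $\ell$ variables that acts as a nonzero scalar on the simple algebra $B=L^{(0)}$, decorate the variables with elements of $H_{m^2}(\zeta)$ involving powers of $v$ so that each alternating set can be inflated to size $\ell m=\dim L$ without the alternation collapsing, and finally pad with long commutators to reach degree exactly $n$. The one point where you genuinely deviate is the layer-locking mechanism, and your version is arguably cleaner: the paper decorates the $i$th block only with $v^{i-1}$ and substitutes $v^{m-i}b_t$, so cross-layer terms vanish only in one direction (via $v^m=0$) and one still needs the implicit counting argument that a permutation which never sends a lower layer to a higher position must preserve every layer; your decorations $h_k=v^kp_k$ with the grading idempotents $p_k=\frac1m\sum_{i}\zeta^{-ki}c^i$ annihilate every cross-layer term outright, so the survival of exactly the block-respecting permutations is immediate. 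The padding also differs slightly (the paper truncates the long-commutator expansion of an outer copy $\tilde f_0$ of the Razmyslov polynomial, while you append arbitrary commutators), but both are routine; one pedantic remark there is that the fact you actually need is $Z(B)=0$ (so that $[B,w]\ne0$ for every nonzero $w\in B$) rather than $B=[B,B]$, which of course holds since $B$ is simple.
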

  \begin{proof}
  Since $L$ is not semisimple, Theorem~\ref{TheoremTaftSimpleNonSemiSimpleLieClassify} implies
  $L \cong L(B, 0)$ for some simple Lie algebra $B$, $\dim B = \ell$.
  We have the $\mathbb Z_m$-grading $L=\bigoplus_{k=0}^{m-1} L^{(k)}$
  (see Remark~\ref{RemarkTaftZmGrading})
  where $L^{(0)}$ can be identified with $B$.
  By Yu.\,P.~Razmyslov's theorem~\cite[Theorem~12.1]{Razmyslov},
  there exists $r\in\mathbb N$ such that for every $k\in\mathbb N$
  there exists a multilinear associative polynomial $$f_0=f_0(x_{11}, \ldots, x_{1\ell};\ldots; x_{kr,1}, \ldots, x_{kr, \ell})$$
  alternating in the variables of each set $\lbrace x_{i1}, \ldots, x_{i\ell} \rbrace$, $1\leqslant i \leqslant kr$, such that
  $f_0(\ad a_1, \ldots, \ad a_\ell;\ldots; \ad a_1, \ldots, \ad a_\ell)$
  is a nonzero scalar operator on $L$ for any basis $a_1, \ldots, a_\ell$ of $B$.
  Here $(\ad x)y := [x,y]$.
  
  Let $n\in\mathbb N$. Define $k := \left[ \frac{n-1}{\ell m r}\right]$. Choose a polynomial $f_0$
  as above alternating in $kr$ sets of $\ell$ variables and a polynomial $\tilde f_0$ 
  alternating in $mr$ sets of $\ell$ variables.
  Consider the Lie ${H_{m^2}(\zeta)}$-polynomial $$f_1:=\tilde f_0(\ad y_{11}, \ldots, \ad y_{1\ell};\ldots; \ad y_{mr,1}, \ldots, \ad y_{mr, \ell})f_2$$ where $$f_2=\left(\prod_{i=1}^m f_0(\ad (x_{11i}^{v^{i-1}}), \ldots, \ad (x_{1\ell i}^{v^{i-1}});\ldots; \ad (x_{kr,1,i}^{v^{i-1}}), \ldots, \ad (x_{kr, \ell,i}^{v^{i-1}}))\right)z.$$
  Let $b_1, \ldots, b_\ell$ be a basis of $L^{(m-1)}$.
  Then $v^{m-1} b_1, \ldots, v^{m-1} b_\ell$ is a basis of $L^{(0)}=B$.
  Hence $f_1$ does not vanish under the substitution $x_{jti}=v^{m-i} b_t$, $y_{jt}=v^{m-1} b_t$ and $z=\bar z$
  for any nonzero $\bar z \in B$. Fix some $\bar z$ and denote this substitution by $\Xi$.
  Let $b$ be the value of $f_1$ under $\Xi$.
  Consider $f_3 := \Alt_1 \ldots \Alt_{kr} f_1$
  where $\Alt_j$ is the operator of alternation in the variables of the set $X_j = \lbrace x_{jti} \mid 1\leqslant t \leqslant \ell,\ 1\leqslant i \leqslant m \rbrace$.
  Note that  $v^m=0$ implies that all the items where $x_{jti}$ is replaced with some $x_{jt'i'}$
  for $i' < i$, vanish. Hence all permutations in the alternations mix variables just in every set 
  $\lbrace x_{jti} \mid 1\leqslant t \leqslant \ell \rbrace$ for fixed $j$ and $i$.
  Since $f_1$ is alternating in the variables of these sets, the value of $f_3$ under $\Xi$
  equals $(\ell!)^{kmr} b \ne 0$.
  
  Note that $ k\ell m r+1 \leqslant n < \deg f_3 = (k+1)\ell mr+1$. We can expand $\tilde f_0$ and rewrite $f_3$ as a linear combination of 
  \begin{equation*}\begin{split}f_4 := \left[w_1, \left[w_2, \ldots, \left[w_{\ell m r}, \Alt_1 \ldots \Alt_{kr}
  \left(\prod_{i=1}^m f_0(\ad (x_{11i}^{v^{i-1}}), \ldots, \ad (x_{1\ell i}^{v^{i-1}}); \right.\right.\right.\right. \\ \left.\left.\left. \ldots; \ad (x_{kr,1,i}^{v^{i-1}}), \ldots, \ad (x_{kr, \ell,i}^{v^{i-1}}))\right)z\right]\ldots\right]\end{split}\end{equation*}
  where the variables $w_i$ are the variables $y_{jt}$ taken in some order depending on the item.
  Since $f_3 \notin \Id^{H_{m^2}(\zeta)}(L)$,
   one of the items $f_4$ does not vanish under $\Xi$.
  Then
  \begin{equation*}\begin{split}f = \left[w_{(k+1)\ell m r-n+2}, \left[w_{(k+1)\ell m r-n+3}, \ldots, \left[w_{\ell m r}, \Alt_1 \ldots \Alt_{kr}
  \left(\prod_{i=1}^m f_0(\ad (x_{11i}^{v^{i-1}}), \ldots, \ad (x_{1\ell i}^{v^{i-1}});\right.\right.\right.\right. \\ \left.\left.\left. \ldots; \ad (x_{kr,1,i}^{v^{i-1}}), \ldots, \ad (x_{kr, \ell,i}^{v^{i-1}}))\right)z\right]\ldots\right] \notin \Id^{H_{m^2}(\zeta)}(L).
  \end{split}\end{equation*}
  Now we notice that $\deg f = n$. If we rename the variables of $f$ to $x_1, x_2, \ldots, x_n$,
  then $f$ satisfies all the conditions of the lemma.
  \end{proof}
  \begin{proof}[Proof of Theorem~\ref{TheoremTaftSimpleLieHPIexpExists}]
  If $L$ is semisimple, then the assertion of the theorem is a consequence of~\cite[Example 10]{ASGordienko5}.
  
  Suppose $L$ is not semisimple. By~\cite[Lemma 1]{ASGordienko5},
  we still have the upper bound $c_n^{H_{m^2}(\zeta)}(L) \leqslant (\dim L)^{n+1}$.

    Let $r$ be the number from Lemma~\ref{LemmaTaftSimpleLieAlt}. Let $\ell :=\frac{\dim L}m$ and $k := \left[\frac{n-1}{\ell m r}\right]$.
  We claim that for every $n\in\mathbb N$ there exists $\lambda \vdash n$, $m(L, {H_{m^2}(\zeta)}, \lambda)\ne 0$,
  such that $\lambda_i \geqslant kr$ for all $1\leqslant i \leqslant \ell m$.
    Consider the polynomial $f$ from Lemma~\ref{LemmaTaftSimpleLieAlt}.
It is sufficient to prove that $e^*_{T_\lambda} f \notin \Id^{H_{m^2}(\zeta)}(L)$
for some tableau $T_\lambda$ of the desired shape $\lambda$.
It is known that $FS_n = \bigoplus_{\lambda,T_\lambda} FS_n e^{*}_{T_\lambda}$ where the summation
runs over the set of all standard tableaux $T_\lambda$,
$\lambda \vdash n$. Thus $$FS_n f = \sum_{\lambda,T_\lambda} FS_n e^{*}_{T_\lambda}f
\not\subseteq \Id^{H_{m^2}(\zeta)}(L)$$ and $e^{*}_{T_\lambda} f \notin \Id^{H_{m^2}(\zeta)}(L)$ for some $\lambda \vdash n$.
We claim that $\lambda$ is of the desired shape.
It is sufficient to prove that
$\lambda_{\ell m} \geqslant kr$, since
$\lambda_i \geqslant \lambda_{\ell m}$ for every $1 \leqslant i \leqslant \ell m$.
Each row of $T_\lambda$ includes numbers
of no more than one variable from each $X_i$,
since $e^{*}_{T_\lambda} = b_{T_\lambda} a_{T_\lambda}$
and $a_{T_\lambda}$ is symmetrizing the variables of each row.
Thus $\sum_{i=1}^{\ell m-1} \lambda_i \leqslant kr(\ell m-1) + (n-k\ell m r) = n-kr$.
 Lemma~\ref{LemmaHStripeTheorem} implies that if $\lambda \vdash n$ and $\lambda_{\ell m+1} > 0$, then $m(L, {H_{m^2}(\zeta)}, \lambda)=0$.
Therefore
$\lambda_{\ell m} \geqslant kr$.

The Young diagram~$D_\lambda$ contains
the rectangular subdiagram~$D_\mu$, $\mu=(\underbrace{kr, \ldots, kr}_{\ell m})$.
The branching rule for $S_n$ implies that if we consider the restriction of
$S_n$-action on $M(\lambda)$ to $S_{n-1}$, then
$M(\lambda)$ becomes the direct sum of all non-isomorphic
$FS_{n-1}$-modules $M(\nu)$, $\nu \vdash (n-1)$, where each $D_\nu$ is obtained
from $D_\lambda$ by deleting one box. In particular,
$\dim M(\nu) \leqslant \dim M(\lambda)$.
Applying the rule $(n-k\ell m r)$ times, we obtain $\dim M(\mu) \leqslant \dim M(\lambda)$.
By the hook formula, $$\dim M(\mu) = \frac{(k\ell m r)!}{\prod_{i,j} h_{ij}}$$
where $h_{ij}$ is the length of the hook with edge in $(i, j)$.
By Stirling formula,
$$c_n^{H_{m^2}(\zeta)}(L)\geqslant \dim M(\lambda) \geqslant \dim M(\mu) \geqslant \frac{(k\ell m r)!}{((kr+\ell m)!)^{\ell m}}
\sim $$ $$\frac{
\sqrt{2\pi (k\ell m r)} \left(\frac{k\ell m r}{e}\right)^{k\ell m r}
}
{
\left(\sqrt{2\pi (k r +\ell m)}
\left(\frac{k r +\ell m}{e}\right)^{k r +\ell m}\right)^{\ell m}
} \sim C_1 k^{r_1} (\ell m)^{k\ell m r}$$
for some constants $C_1 > 0$, $r_1 \in \mathbb Q$,
as $k \to \infty$. (We write $f\sim g$ if $\lim \frac{f}{g} = 1$.)
Since $k = \left[\frac{n-1}{\ell m r}\right]$,
this gives the lower bound.
\end{proof}

\end{document}